\newtheorem{theorem}{Theorem}[section]
\newtheorem{theoremx}{Theorem}
\newtheorem{lemma}[theorem]{Lemma}
\newtheorem{proposition}[theorem]{Proposition}
\newtheorem{corollary}[theorem]{Corollary}
\theoremstyle{definition}
\newtheorem{definition}[theorem]{Definition}
\newtheorem{remark}[theorem]{Remark}
\theoremstyle{remark}
\numberwithin{equation}{section}
\let\epsilon\varepsilon
\newcommand{\C}			{{\operatorname{\mathbb{C}}}}
\newcommand{\R}			{{\operatorname{\mathbb{R}}}}
\newcommand{\N}			{{\operatorname{\mathbb{N}}}}
\DeclareMathOperator{\supp}{supp}
\DeclareMathOperator{\id}{id}
\DeclareMathOperator{\tr}{tr}
\newcommand{\fn}{\mathfrak{n}}
\newcommand{\fm}{\mathfrak{m}}
\newcommand{\cN}{\mathcal{N}}
\newcommand{\cR}{\mathcal{R}}
\newcommand{\cE}{\mathcal{E}}
\newcommand{\cT}{\mathcal{T}}
\newcommand{\SL}{\textrm{SL}}
\newcommand{\cV}{\mathcal{V}}
\newcommand{\cM}{\mathcal{M}}
\newcommand{\cL}{\mathcal{L}}
\renewcommand{\a}{\alpha}
\newcommand{\g}{\gamma}
\newcommand{\vphi}{\varphi}
\newcommand{\z}{\zeta}
\renewcommand{\th}{\theta}
\renewcommand{\l}{\lambda}
\renewcommand{\k}{\kappa}
\newcommand{\s}{\sigma}
\newcommand{\la}{\langle}
\newcommand{\ra}{\rangle}
\title[Multilinear transference for non-unimodular groups]{Transference  of multilinear Fourier and Schur multipliers acting on non-commutative $L_p$-spaces for non-unimodular groups}
\date{\noindent \today.  MSC2010 keywords: 22D25, 43A15,  46L51.  GV is supported by the NWO Vidi grant VI.Vidi.192.018 `Non-commutative harmonic analysis and rigidity of operator algebras'.  }
\author{Gerrit Vos}
\address{TU Delft, EWI/DIAM,
	P.O.Box 5031,
	2600 GA Delft,
	The Netherlands}
 \email{G.M.Vos@tudelft.nl}
\begin{document}

\maketitle

\begin{abstract}
In \cite{CKV}, transference results between multilinear Fourier and Schur multipliers on noncommutative $L_p$-spaces were shown for unimodular groups. We propose a suitable extension of the definition of multilinear Fourier multipliers for non-unimodular groups, and show that the aforementioned transference results also hold in this more general setting.
\end{abstract}

\section{Introduction}

A central problem in classical harmonic analysis is finding conditions on symbols $\phi \in L_\infty(\R^n)$ such that the associated Fourier multiplier $T_\phi$ is bounded on $L_p(\R^n)$. One can replace $\R^n$ here by any locally compact abelian group in a straightforward way. For non-abelian groups $G$, there is no Pontryagin dual; instead, the Fourier multiplier corresponding to a function $\phi$ on $G$ is a map on the group von Neumann algebra. It is given by $\l_s \mapsto \phi(s) \l_s$ for $s \in G$, where $\l$ is the left regular representation. Equivalently, it is given by $\l(f) \mapsto \l(\phi f)$ for $f \in L_1(G)$. It turns out that symbols $\phi \in L_\infty(G)$ give rise to bounded Fourier multipliers on $\cL G$ precisely when $\phi$ defines a multiplier on the Fourier algebra $A(G)$, which coincides with the predual of $\cL G$.  \\

Another interesting question is which symbols $\phi \in L_\infty(G)$ give rise to a completely bounded multiplier on $\cL G$. Bozejko and Fendler \cite{BozejkoFendler} showed that this happens exactly when $\phi$ defines a bounded Schur multiplier of Toeplitz type on $B(L_2(G))$, and in that case the completely bounded norms are equal. This is called a transference result between Fourier and Schur multipliers. A different proof of this transference result was later provided by Jolissaint \cite{Jolissaint}. This relation between Fourier and Schur multipliers has been an important tool to prove several multiplier results. For instance, bounding the norm of Fourier multipliers by that of Schur multipliers played a crucial role in \cite{PRS}. The converse transference was used in \cite{Pisier98} to give examples of bounded multipliers on $L_p$-spaces that are not completely bounded. Similar transference techniques were used in \cite{CGPT} to prove H\"ormander-Mikhlin criteria for the boundedness of Schur multipliers, and in \cite{LafforgueDeLaSalle} to find examples of non-commutative $L_p$-spaces without the completely bounded approximation property. \\

There are several papers treating transference results for the noncommutative $L_p$-spaces $L_p(\cL G)$. Neuwirth and Ricard studied this question for discrete groups \cite{NeuwirthRicard}. In this case, the Fourier multipliers are relatively straightforward to define on $L_p(\cL G)$. They proved that if $\phi$ defines a completely bounded Fourier multiplier on $L_p(\cL G)$, then it defines a completely bounded Schur multiplier on the Schatten class $S_p(L_2(G))$. Moreover, they proved that the converse implication also holds provided that the group $G$ is amenable. Later, Caspers and De la Salle \cite{CaspersDeLaSalle} defined Fourier multipliers on the noncommutative $L_p$-spaces of general locally compact groups and proved that the same transference results also hold here. An analogous result was proved by Gonzalez-Perez \cite{gonzalez2018crossed} for crossed products. \\

Juschenko, Todorov and Turowska \cite{JTT} introduced multilinear Schur multipliers with respect to measure spaces. Such multipliers and the related notion of multiple operator integrals have been used to prove several interesting results such as the resolution of Koplienko's conjecture on higher order spectral shift functions in \cite{PSS13}. Therefore, it is also interesting to consider transference between multilinear Fourier and Schur multipliers. Todorov and Turowska \cite{TodorovTurowska} defined a multidimensional Fourier algebra and proved a transference result for multiplicatively bounded multilinear Fourier and Schur multipliers. \\

To consider multilinear results on noncommutative $L_p$-spaces, one needs a $(p_1, \dots, p_n)$-version of multiplicative boundedness. This was introduced by Caspers, Janssens, Krishnaswamy-Usha and Miaskiwskyi \cite{CJKM} along the lines of Pisier's characterisation of completely bounded norms in terms of Schatten norms \cite[Lemma 1.7]{Pisier98}. Moreover, they proved a bilinear transference result for discrete groups `along the way' in \cite[Proof of Theorem 7.2]{CJKM}, in order to provide examples of $L_p$-multipliers for semidirect products of groups. Multilinear transference was studied in a more general sense by Caspers, Krishnaswamy-Usha and the author of the present manuscript in \cite{CKV}, but only for unimodular groups. They obtained a generalisation of the linear transference results. As a direct consequence of this transference result, a De Leeuw-type restriction theorem was proven for the multiplicatively bounded norms. In this paper, we complete the picture by proving a multilinear transference result for general locally compact groups. \\

The main difficulty for non-unimodular groups comes from the fact that the Plancherel weight is not tracial. This means that we have to deal with spatial derivatives, which in our case will just be the multiplication operator with the modular function. It will be denoted by $\Delta$. In particular, this raises the question how the multilinear Fourier multiplier should be defined for $p < \infty$. It turns out that, in order to prove transference results, one needs to use the definition that `leaves the $\Delta$'s in place'. More precisely, for `suitable' $f_i$ (this will be defined later) and $x_i = \Delta^{\frac1{2p_i}} \l(f_i) \Delta^{\frac1{2p_i}}$, the Fourier multiplier is defined as 
\[
    T_\phi(x_1, \dots, x_n) = \int_{G^{\times n}} \phi(s_1, \dots, s_n) f_1(s_1) \ldots f_n(s_n) \Delta^{\frac1{2p_1}} \l_{s_1} \Delta^{\frac1{2p_1}} \ldots \Delta^{\frac1{2p_n}} \l_{s_n} \Delta^{\frac1{2p_n}} ds_1 \ldots ds_n.
\]
A major drawback of this definition is that it is not suitable for interpolation results when $n > 2$, unless the `intermediate' $p_i$'s are all equal to $\infty$, in which case it is open. All this will be discussed in Section \ref{Sect=FourierDef}. Our first main result gives the multilinear transference from Fourier multipliers as defined above to Schur multipliers. This is Theorem \ref{Thm=FouriertoSchur}. The definitions of $(p_1, \dots, p_n)$-multiplicative norms is given in Section \ref{Sect=mb}. 

\begin{theoremx} \label{Thm=FouriertoSchurIntro}
Let $G$ be a locally compact first countable group and let $1 \leq p \leq \infty$, $1<p_1, \ldots, p_n \leq \infty$ be such that $p^{-1} =  \sum_{i=1}^n p_i^{-1}$.
Let $\phi \in C_b(G^{\times n})$ and define $\widetilde{\phi} \in C_b(G^{\times n + 1})$ by
\begin{equation*}
    \widetilde{\phi}(s_0, \ldots, s_n) = \phi(s_0 s_1^{-1}, s_1 s_2^{-1}, \ldots, s_{n-1} s_n^{-1}), \qquad s_i \in G.
\end{equation*}
If $\phi$ is the symbol of a $(p_1, \ldots, p_n)$-multiplicatively bounded Fourier multiplier $T_\phi$ of $G$, then $\widetilde{\phi}$ is the symbol of a $(p_1, \ldots, p_n)$-multiplicatively bounded Schur multiplier $M_{\widetilde{\phi}}$ of $G$. Moreover,
\[
\begin{split}
& \Vert M_{\widetilde{\phi}}: S_{p_1}(L_2(G)) \times \ldots \times S_{p_n}(L_2(G)) \rightarrow S_{p}(L_2(G)) \Vert_{(p_1,\ldots,p_n)-mb}\\
& \qquad  \leq
\Vert T_{\phi}: L_{p_1}(\mathcal{L}G  ) \times \ldots \times  L_{p_n}(\mathcal{L}G ) \rightarrow L_{p}(\mathcal{L}G ) \Vert_{(p_1,\ldots,p_n)-mb}.
\end{split}
\]
\end{theoremx}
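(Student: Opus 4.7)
The strategy is the natural multilinear, non-unimodular extension of the Neuwirth--Ricard, Caspers--De la Salle and \cite{CKV} transference arguments: realize the Schur multiplier $M_{\widetilde\phi}$ as a limit of matrix compressions of the Fourier multiplier $T_\phi$ via wave packets supported in smaller and smaller neighborhoods of the identity. The choice of definition that ``keeps the $\Delta$'s in place'' is designed precisely so that the modular factors arising when one permutes $\lambda_{s_i}$ past $\Delta^\alpha$ telescope to give the correct weights in the Schur kernel.

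Use first countability to fix a decreasing neighborhood basis $\{U_\epsilon\}$ of the identity, and choose nonnegative $\xi_\epsilon \in C_c(G)$ with $\supp \xi_\epsilon \subset U_\epsilon$ and $\|\xi_\epsilon\|_2 = 1$. For each index $i$, build from $\xi_\epsilon$ a wave-packet element $F_{\epsilon,i}$, roughly of the form $\lambda(\xi_\epsilon)\Delta^{1/(2q_i)}$, lying in an appropriate $L_{2q_i}(\mathcal{L}G)$, with exponents $q_i$ chosen so that sandwich products land in $L_p(\mathcal{L}G)$ with the correct modular weights. Given finite-rank operators $y_i \in S_{p_i}(L_2(G))$ (which suffice by density), define $x_i^\epsilon \in L_{p_i}(\mathcal{L}G)$ by sandwiching $y_i$ between consecutive $F_{\epsilon,\cdot}$, producing admissible inputs of the form $\Delta^{1/(2p_i)} \lambda(f_i^\epsilon) \Delta^{1/(2p_i)}$ for the Fourier multiplier.

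Now apply $T_\phi$ to $(x_1^\epsilon,\ldots,x_n^\epsilon)$ and compute the resulting matrix coefficient against a pair of test vectors in $L_2(G)$. Commuting the half-powers of $\Delta$ past the $\lambda_{s_i}$ via $\lambda_s \Delta^\alpha = \Delta(s)^{-\alpha} \Delta^\alpha \lambda_s$ gathers all modular factors together, and the change of variables $s_i = t_{i-1} t_i^{-1}$ (the telescoping inverse of the map defining $\widetilde\phi$) produces an integral whose kernel is $\phi(s_0 s_1^{-1}, \ldots, s_{n-1} s_n^{-1})$ times a product of $\xi_\epsilon$-factors supported along the diagonal $t_0 = \cdots = t_n$, multiplied by precisely the modular weights that the Kosaki embedding of $S_{p_i}$ into $L_{p_i}(\mathcal{L}G)$ contributes on the outside. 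As $\epsilon \to 0$, the $\xi_\epsilon$'s act as an approximate identity and the compressed $T_\phi$-image converges in $S_p$-norm to $M_{\widetilde\phi}(y_1,\ldots,y_n)$. Since each sandwiching $y_i \mapsto x_i^\epsilon$ and the outer compression are contractions between the relevant $L_{p_i}$- and $S_{p_i}$-norms, the desired norm estimate follows. Extension to $(p_1,\ldots,p_n)$-multiplicative boundedness is obtained by running the same construction tensored with auxiliary Schatten factors, since the wave-packet compressions act only on the ``Fourier side'' and commute with right tensor factors.

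The main obstacle is the kernel calculation: proving that the $n$-fold product of operators of the form $\Delta^{1/(2p_i)} \lambda(f_i^\epsilon) \Delta^{1/(2p_i)}$, with unbounded $\Delta^{1/(2p_i)} \Delta^{1/(2p_{i+1})}$ factors in the middle, defines an element of $L_p(\mathcal{L}G)$; that the integral expansion and the commutation of $\Delta^\alpha$ past $\lambda_{s_i}$ are legitimate as bounded-operator identities; and that the resulting matrix coefficient equals the predicted kernel. This rests on Tomita--Takesaki theory for the Plancherel weight and on Kosaki's description of $L_p(\mathcal{L}G)$; the strict inequality $p_i > 1$ is used here to ensure that the half-power elements $\lambda(\xi_\epsilon)\Delta^{1/(2p_i)}$ represent bona fide elements of $L_{2p_i}(\mathcal{L}G)$. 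Once this is secured, the telescoping of modular factors and the $\epsilon \to 0$ limit follow from standard approximate-identity and density arguments.
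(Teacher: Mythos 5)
Your overall geometry is right --- approximate matrix units built from wave packets concentrated near the identity, compression of $T_\phi$, and telescoping of the modular factors is exactly the mechanism of the paper's proof (which uses the unitary $U=\sum_{s\in F}p_s\otimes\l_s$ and the elements $h_V^{2/p}$ with $k_V=\|1_V\Delta^{-1/4}\|_2^{-1}\l(1_V\Delta^{-1/4})\Delta^{1/2}$). But two steps you treat as routine are in fact where the proof lives, and one of them is set up incorrectly.

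First, the claimed contractive ``sandwiching'' $y_i\mapsto x_i^\epsilon$ from $S_{p_i}(L_2(G))$ into $L_{p_i}(\cL G)$ does not exist: a general finite-rank kernel $y_i(s,t)$ depends on two variables, while any element of the form $\Delta^{a}\l(f)\Delta^{b}$ has a Toeplitz-type kernel in $st^{-1}$, so $S_{p_i}$ cannot be encoded inside $L_{p_i}(\cL G)$ this way. The correct construction first reduces to finite subsets $F\subseteq G$ via the truncation theorem (Theorem \ref{Thm=Finite truncation}) --- which is also what legitimises working with finite-rank inputs when some $p_i=\infty$, where density fails --- and then uses the isometry $a\mapsto U(a\otimes h_V^{2/p_i})U^*$ landing in $S_{p_i}^N\otimes L_{p_i}(\cL G)$, not in $L_{p_i}(\cL G)$ alone; the matrix tensor factor must be carried along throughout.

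Second, and more seriously, the assertion that ``as $\epsilon\to0$ the $\xi_\epsilon$'s act as an approximate identity and the compressed $T_\phi$-image converges in $S_p$-norm to $M_{\widetilde\phi}(y_1,\dots,y_n)$'' is not a standard approximate-identity argument; it is the analytic core of the theorem. The net $h_V^{2/p}$ does not converge to anything in $L_p(\cL G)$, so what must be shown is the asymptotic intertwining
\[
\bigl\| T_{\phi_k(r_0\,\cdot\,r_1^{-1},\dots,r_{n-1}\,\cdot\,r_n^{-1})}(h_V^{2/p_1},\dots,h_V^{2/p_n})-\phi_k(r_0r_1^{-1},\dots,r_{n-1}r_n^{-1})\,h_V^{2/p}\bigr\|_{L_p(\cL G)}\to 0 .
\]
Establishing this requires (i) replacing $\phi$ by coordinatewise convolutions $\phi_k$ with a carefully placed ``split'' index $i$ so that the partial H\"older exponents $\bar p_1,\bar p_2$ exceed $1$; (ii) the reduction lemmas (Lemmas \ref{Lem=Lemma 4.3}, \ref{Lem=Lemma4.4}, \ref{Lem=Lemma4.4b}) to disintegrate the multilinear multiplier into nested linear ones; and (iii) the linear intertwining result (Proposition \ref{Prop=Blackbox}), whose proof is itself nontrivial, resting on Haagerup reduction and H\"older estimates for noncommutative Mazur maps. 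None of this is supplied by continuity of $\phi$ and shrinking supports alone. Relatedly, your explanation of the hypothesis $p_i>1$ is off: $h_V^{2/q}$ is a perfectly good element of $L_q(\cL G)$ for all $1\le q\le\infty$; the condition $p_i>1$ is needed so that a split index can be chosen with $\bar p_1,\bar p_2>1$, which is what the application of Proposition \ref{Prop=Blackbox} demands.
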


The proof is mostly an adaptation of the proof of \cite[Theorem 3.1]{CKV}. \cite[Theorem 3.1]{CKV} in turn has a large overlap with \cite[Lemma 4.6]{CJKM}. For that reason, the proof of \cite[Theorem 3.1]{CKV} only sketches the changes compared to \cite[Lemma 4.6]{CJKM}. As it seems undesirable to keep stacking sketches of changes, we have chosen to include the proof in full detail here. However, most of the work in generalising to the non-unimodular case comes from generalising the reduction lemmas \cite[Lemmas 4.3 and 4.4]{CJKM}. We do this already in Section \ref{Sect=FourierDef}. Note that \cite[Lemma 4.4]{CJKM} does not give any details for the proof, even though it is not that trivial even in the unimodular case. In Lemma \ref{Lem=Lemma4.4b} we give an elegant induction argument which fills this gap. 

Also, we need an extension of the intertwining result \cite[Proposition 3.9]{CJKM} for non-unimodular groups, which we state in Proposition \ref{Prop=Blackbox}. We will sketch the proof in a separate technical section at the end. We also note that in \cite[Theorem 3.1]{CKV}, the group was required to be second countable, but in the proof actually only first countability was needed. \\

For amenable groups, we also have the converse transference result. In fact, one no longer needs a continuous symbol, nor the first countability condition on the group. This is Corollary \ref{Cor=Schur to Fourier}.

\begin{theoremx} \label{Thm=Schur to FourierIntro}
Let $G$ be an amenable locally compact group and $1 \leq p, p_1, \dots, p_n \leq \infty$ be such that $\frac1p=\sum_{i=1}^n \frac1{p_i}$. Let $\phi \in  L_\infty(G^{\times n})$ and define $\widetilde{\phi}$ as in Theorem \ref{Thm=FouriertoSchurIntro}. If $\widetilde{\phi}$ is the symbol of a $(p_1, \dots, p_n)$-bounded (resp. multiplicatively bounded) Schur multiplier then $\phi$ is the symbol of a $(p_1, \dots, p_n)$-bounded (resp. multiplicatively bounded) Fourier multiplier. Moreover, 
\[
    \|T_\phi\|_{(p_1, \dots, p_n)} \leq \|M_{\widetilde{\phi}}\|_{(p_1, \dots, p_n)}, \qquad \|T_\phi\|_{(p_1, \dots, p_n)-mb} \leq \|M_{\widetilde{\phi}}\|_{(p_1, \dots, p_n)-mb}.
\]
\end{theoremx}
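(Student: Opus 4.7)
The plan is to exploit amenability through a Reiter/F\o lner net to build an asymptotic intertwiner that maps Fourier multipliers on $L_p(\mathcal{L}G)$ into Schur multipliers on $S_p(L_2(G))$ in a norm-preserving way. Since Theorem~A has already established the inequality $\|M_{\widetilde{\phi}}\| \le \|T_\phi\|$ in the non-amenable (but first countable) regime, for amenable $G$ we only need the reverse estimate, which we obtain by showing that on a sufficiently dense set of test elements, $T_\phi$ is a limit of compressions of $M_{\widetilde{\phi}}$.

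Concretely, by the reduction arguments that will be established in Section~\ref{Sect=FourierDef} (the analogues of Lemmas~\ref{Lem=Lemma4.4b} and its companion), it suffices to verify the norm inequality on tuples $x_i=\Delta^{1/2p_i}\lambda(f_i)\Delta^{1/2p_i}$ with $f_i$ compactly supported and smooth. Using amenability, I would choose a net of positive, compactly supported unit vectors $\xi_\alpha\in L_2(G)$ with $\|\lambda_s\xi_\alpha-\xi_\alpha\|_2\to 0$ uniformly on compacta (Reiter's condition), and define kernel operators
\[
    K_{\alpha,i}(s,t)=\Delta(s)^{1/2p_i}\,\xi_\alpha(s)\,f_i(st^{-1})\,\xi_\alpha(t)\,\Delta(t)^{1/2p_i},
\]
so that $K_{\alpha,i}\in S_{p_i}(L_2(G))$ with $\|K_{\alpha,i}\|_{p_i}\to\|x_i\|_{p_i}$. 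Expanding $M_{\widetilde{\phi}}(K_{\alpha,1},\dots,K_{\alpha,n})$ and using the defining formula $\widetilde{\phi}(s_0,\dots,s_n)=\phi(s_0s_1^{-1},\dots,s_{n-1}s_n^{-1})$, the differences $s_{i-1}s_i^{-1}$ are precisely those arising in the integral defining $T_\phi(x_1,\dots,x_n)$, and the successive $\Delta^{1/2p_i}$-weights on adjacent kernel variables assemble into the spatial derivatives in the definition of $T_\phi$. Then, up to an error controlled by the Reiter invariance of $\xi_\alpha$ along the compact supports of the $f_i$, one obtains
\[
    \|T_\phi(x_1,\dots,x_n)\|_p \;=\; \lim_\alpha \bigl\|M_{\widetilde{\phi}}(K_{\alpha,1},\dots,K_{\alpha,n})\bigr\|_p \;\le\; \|M_{\widetilde{\phi}}\|_{(p_1,\dots,p_n)}\prod_{i=1}^n\|x_i\|_{p_i}.
\]
The multiplicatively bounded case is handled by tensoring the whole construction with $S_q(\ell_2(I_i))$ (for arbitrary indexing sets $I_i$) and observing the construction is stable under amplification. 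The $L_\infty$ condition on $\phi$ (replacing $C_b$ from Theorem~A) is accommodated by approximating $\phi$ weak-$*$ by continuous compactly supported symbols and using lower semicontinuity of the $L_p$-norm.

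The main obstacle will be arranging the $\Delta$-weights in $K_{\alpha,i}$ so that they compose correctly under the Schur multiplier action. In the unimodular case treated in \cite{CKV}, $\Delta\equiv 1$ and the compressions $\xi_\alpha\lambda(f_i)\xi_\alpha$ automatically carry the convolution-type kernel $f_i(st^{-1})$, so the algebra is transparent. Here, each kernel variable appears in two different factors $K_{\alpha,i-1}$ and $K_{\alpha,i}$ (once as a ``column'' index and once as a ``row'' index), and the total $\Delta$-weight collected at that variable in the product must be exactly $\Delta^{1/2p_{i-1}+1/2p_i}$, matching the interior $\Delta^{1/p_i}$ (respectively $\Delta^{1/2p_1}$, $\Delta^{1/2p_n}$ at the endpoints) forced by the non-tracial Plancherel weight. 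Aligning these exponents, together with checking that the Reiter asymptotic invariance indeed absorbs the commutators $[\xi_\alpha,\lambda_s]$ uniformly in the $f_i$-supports even in the presence of $\Delta^{1/2p_i}$, is the technical core; once done, one invokes the intertwining machinery of Proposition~\ref{Prop=Blackbox} as a black box to identify the limit with the Fourier multiplier.
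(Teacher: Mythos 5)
Your overall strategy---compress the $x_i$ to Schatten-class kernels using an asymptotically invariant net coming from amenability, and recover the Fourier multiplier norm from the Schur multiplier applied to the compressions---is indeed the strategy of the paper (which uses the F\o lner compressions $i_{p,\alpha}(x)=\mu(F_\alpha)^{-1/p}P_{F_\alpha}xP_{F_\alpha}$, essentially your $K_{\alpha,i}$ with $\xi_\alpha=\mu(F_\alpha)^{-1/2}1_{F_\alpha}$). However, the central analytic claim, namely
\[
\|T_\phi(x_1,\dots,x_n)\|_p=\lim_\alpha\|M_{\widetilde{\phi}}(K_{\alpha,1},\dots,K_{\alpha,n})\|_p,
\]
is asserted rather than proven, and as stated it cannot be the route: membership of $T_\phi(x_1,\dots,x_n)$ in $L_p(\cL G)$ is part of what must be established, so one cannot start from its $L_p$-norm. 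The paper circumvents this by \emph{dualising}: it pairs both $i_{p,\alpha}(T_\phi(x_1,\dots,x_n))$ and $M_{\widetilde{\phi}}(i_{p_1,\alpha}(x_1),\dots,i_{p_n,\alpha}(x_n))$ against $i_{p',\alpha}(y)$ for test elements $y\in\k_{p'}^{\th}(L)$; these pairings make sense at the level of $S_2$-kernels even before one knows $T_\phi(x_1,\dots,x_n)\in L_p$, the difference of the two kernels is an explicit integral whose intermediate variables range over $G^{\times(n-1)}$ versus $F_\alpha^{\times(n-1)}$, and the F\o lner condition bounds the measure of the discrepancy set. The norm inequality is then recovered from the approximate isometry of the pairings \eqref{Eqn=ApproxIsometry}. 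Your proposal needs this duality step (or an equivalent lower-semicontinuity mechanism); ``up to an error controlled by Reiter invariance'' does not by itself produce a limit of $S_p$-norms, and estimating the $S_p$-norm (rather than an $S_2$-pairing) of the difference of two kernels for $p\neq 2$ is not straightforward.

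Two further concrete problems. First, the exponent bookkeeping you defer is not merely cosmetic: the kernel of $P_F\Delta^{a}\lambda(f)\Delta^{b}P_F$ is $1_F(s)\Delta^a(s)f(st^{-1})\Delta^{b-1}(t)1_F(t)$ by \eqref{Eqn=Kernel of single operator}, i.e.\ it carries an extra $\Delta^{-1}$ in the column variable which your $K_{\alpha,i}$ omits, and correspondingly the compressed Fourier multiplier carries the global factor $\Delta((t_1\cdots t_n)^{-1})$ (Lemma \ref{Lem=Fourier kernel}); without tracking these the two sides of your intended identity do not match. Second, Proposition \ref{Prop=Blackbox} is the wrong tool here: it is a statement about \emph{linear} multipliers acting on the approximate units $h_V^{2/q}$ and is used only in the Fourier-to-Schur direction (Lemma \ref{Lem=Lemma4.6}); it plays no role in, and cannot ``identify the limit'' in, the Schur-to-Fourier argument. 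Finally, no weak-$*$ approximation of $\phi$ by continuous symbols is needed (or easily justified): the kernel computation works verbatim for $\phi\in L_\infty(G^{\times n})$.
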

Again, the proof is similar to \cite{CKV}, but with additional technical complications. We also abstain from using ultraproduct techniques since they were not actually necessary for the proof. It should be noted that if $p_i = \infty$ for some $1 \leq i \leq n$, then our methods only yield the above boundedness results of the multilinear Fourier multiplier on $C_\l^*(G)$ in the $i$'th input (and conversely, boundedness on $C_\l^*(G)$ is all we need for the converse direction in Theorem \ref{Thm=FouriertoSchurIntro}). Of course, if $p_1 = \ldots = p_n = p = \infty$, then the result from \cite{TodorovTurowska} guarantees that the Fourier multiplier is indeed bounded on $(\cL G)^{\times n}$. 

As a result of Theorem \ref{Thm=Schur to FourierIntro}, we again get a multilinear De Leeuw-type restriction theorem.\\

Finally, we describe the structure of the paper. We start by giving the necessary preliminaries in Section \ref{Sect=Prelims}.  We also give a new definition of (linear) $p$-Fourier multipliers here. In Section \ref{Sect=FourierDef}, we discuss possible definitions of the multilinear Fourier multiplier, and explain why the definition as stated above is the correct one for transference. We also prove some properties of the multilinear Fourier multiplier that we will need later. In Sections \ref{Sect=FouriertoSchur} and \ref{Sect=SchurToFourier}, we prove the transference from Fourier to Schur (Theorem \ref{Thm=FouriertoSchurIntro}) and transference from Schur to Fourier (Theorem \ref{Thm=Schur to FourierIntro}) respectively. In Section \ref{Sect=Intertwining}, we sketch the proof of Proposition \ref{Prop=Blackbox} using Haagerup reduction. This section is rather technical and not essential to understand the bigger picture.

\section{Preliminaries} \label{Sect=Prelims}

 A more elaborate discussion of some of the statements in this section is contained in the author's PhD thesis \cite{VosThesis}.

\subsection{\texorpdfstring{$L_p$}{Lp}-spaces of group von Neumann algebras for noncommutative groups}

Let $G$ be a locally compact group, not necessarily unimodular. We will denote the left Haar measure of such a group by $\mu := \mu_G$. Its modular function will be denoted by $\Delta$. Recall that $\Delta: G \to (\R_{> 0}, \times)$ is a continuous group homomorphism satisfying 
\[
    \int_G f(s^{-1}) \Delta(s^{-1}) d\mu(s) = \int_G f(s) d\mu(s) = \Delta(t) \int_G f(st) d\mu(s), \qquad t \in G, f \in L_1(G).
\]
In the sequel, we will write $ds$ for the left Haar measure. \\

The left regular representation $\l$ acts on $L_2(G)$ by $(\l_sf)(t) = f(s^{-1}t)$. It also defines a $*$-representation of the $*$-algebra $L_1(G)$ on $L_2(G)$ by the formula
\[
    \l(f)g = \int_G f(s) (\l_sg) ds = f*g.
\]
The group von Neumann algebra $\cL G$ of $G$ is defined as 
\[
    \cL G = \{\l_s: s \in G\}'' = \{\l(f): f \in L_1(G)\}''.
\]
The group von Neumann algebra admits a canonical weight $\vphi$, named the Plancherel weight. It is defined for $x \in \cL G$ by
\[
    \vphi(x^*x) = \begin{cases} \|f\|_2^2 \qquad & \text{if } x = \l(f) \text{ for some } f \in L_2(G) \\ \infty & \text{else}. \end{cases}
\]
Here we extend $\l$ to all functions that define a bounded convolution operator. The Plancherel weight is tracial if and only if $G$ is unimodular. Similarly, there is a right regular representation $\rho$ defined by $(\rho_sf)(t) = \Delta^{1/2}(s) f(ts)$ and a Plancherel weight $\psi$ on $(\cL G)'$ defined  as $\psi(x^*x) = \|f\|^2_2$ if $x$ is given by $x = \rho(f)$ for some $f \in L_2(G)$ and $\psi(x) = \infty$ otherwise. In practice, the group $\cL G$ is often semifinite even when $G$ is not unimodular, but it is more natural to work with the Plancherel weight. For instance, the definition of $\vphi$ gives the Plancherel identity \eqref{Eqn=PlancherelIdentity} below.  \\

Let $L_p(\cL G)$ denote the Connes-Hilsum $L_p$-space corresponding to the Plancherel weight $\psi$ on $(\cL G)'$ (\cite{ConnesSpatial},\cite{Hilsum}; see also \cite{TerpLN}). To keep this paper more accessible, we will not be using Tomita-Takesaki theory or the theory of spatial derivatives, except for the last section. Instead, we will use some facts about the Connes-Hilsum $L_p$-spaces as a black box. Firstly, for $1 \leq p < \infty$, elements of $L_p(\cL G)$ are closed unbounded operators on $L_2(G)$, and sums and products of such operators are densely defined and preclosed. Addition and multiplication on $L_p(\cL G)$ are defined by taking the closures of the resulting sum resp. product, and we will use the usual addition and multiplication notations for this. For any $x \in L_p(\cL G)$, $1 \leq p \leq \infty$ we have
\begin{equation} \label{Eqn=Mult-lambda_s}
    \|\l_s x \l_t\|_{L_p(\cL G)} = \|x\|_{L_p(\cL G)}, \qquad s,t \in G.
\end{equation}

The spatial derivative $\frac{d\varphi}{d\psi}$ is just multiplication with the modular function $\Delta$; see for instance \cite[Section 3.5]{CaspersDeLaSalle} for more details. With slight abuse of notation, we will denote this operator by $\Delta$ as well. The domain of this operator is exactly the set of functions $f \in L_2(G)$ such that $\int_G \Delta^2(s) |f(s)|^2 ds < \infty$. Usually we will only apply $\Delta$ to continuous compactly supported functions, so that no technical complications can arise.\\

Now define $L := \l(C_c(G) \star C_c(G))$, where $C_c(G) \star C_c(G)$ is the linear span of elements of the form $f * g$, $f, g \in C_c(G)$. For $\theta \in [0,1]$ and $1 \leq p < \infty$, there is an embedding $\kappa^{\theta}_p: L \to L_p(\cL G)$ given by 
\[
    \kappa^{\th}_p(x) = \Delta^{(1-\theta)/p} x \Delta^{\theta/p}.
\]
For $p = \infty$ we simply set $\k_p^\th$ to be the identity. The images of the embeddings $\k^{\th}_p$ are dense in $L_p(\cL G)$, which will be crucial in the rest of the paper. \\

We now state some other facts for later use. From the definitions of the $L_p$-norm and the Plancherel weight we have the following Plancherel identity:
\begin{equation} \label{Eqn=PlancherelIdentity}
    \|\l(f) \Delta^{1/2}\|_{L_2(\cL G)} = \|f\|_2, \qquad f \in L_2(G) \cap L_1(G).
\end{equation}
As a sidenote, this implies that the map $C_c(G) \mapsto \k_2^1(\l(C_c(G)))$, $f \mapsto \l(f) \Delta^{1/2}$ extends to a unitary $L_2(G) \cong L_2(\cL G)$. Next, a straightforward calculation yields the following commutation formulae:
\begin{equation} \label{Eqn=CommutationFormula} 
    \Delta^z \l_s = \Delta^z(s) \l_s \Delta^z, \qquad z \in \C, s \in G
\end{equation}
and
\begin{equation} \label{Eqn=CommutationFormula2}
    \Delta^z \l(f)  = \l(\Delta^zf) \Delta^z, \qquad z \in \C, f \in C_c(G).
\end{equation}
Also, note that for $f = g * h$, $g,h \in C_c(G)$ and $z \in \C$, we have
\begin{equation} \label{Eqn=MultwithDelta}
    (\Delta^z f)(s) = \Delta^z(s) \int_G g(t) h(t^{-1}s) dt = \int_G \Delta^z(t) g(t) \Delta^z(t^{-1}s) h(t^{-1}s) dt = ((\Delta^z g) * (\Delta^z h))(s)
\end{equation}
and therefore $\Delta^z (C_c(G) \star C_c(G)) = C_c(G) \star C_c(G)$. Hence, \eqref{Eqn=CommutationFormula2} yields that 
\begin{equation} \label{Eqn=EqualEmbeddings}
    \k^{\th}_p(L) = \k_p^0(L), \qquad \forall\ \th \in [0,1].
\end{equation}

\subsection{Operator spaces and multiplicatively bounded maps} \label{Sect=mb}

Let $E_1, \dots, E_n, E$ be operator spaces and $T: E_1 \times \ldots \times E_n \to E$ a linear map. For $N \geq 1$, the \emph{multiplicative amplification} $T^{(N)}: M_N(E_1) \times \ldots M_N(E_n) \to M_N(E)$ of $T$ is defined as
\[
    T^{(N)}(\alpha_1 \otimes x_1, \dots, \alpha_n \otimes x_n) = \alpha_1 \ldots \a_n \otimes T(x_1, \dots, x_n), \qquad \a_i \in M_N(\C), x_i \in E_i
\]
and extended linearly. The map $T$ is said to be \emph{multiplicatively bounded} if 
\[
    \|T\|_{mb} := \sup_{N \geq 1} \|T^{(N)}\| < \infty.
\]
 Let us now define a notion of $p$-completely bounded maps. Set $E = L_p(\cM, \psi)$ for some von Neumann algebra $\cM$ and normal faithful semifinite weight $\psi$ on $\cM'$. Let $x \in M_N(E)$; then $x$ is closable and $[x] \in L_p(M_n(\cM), \tr_N \otimes \psi)$. Here $\tr_N$ is the trace on $M_N(\C)$. Now we set $S_p^N \otimes L_p(\cM, \psi)$ to be the space $M_n(L_p(\cM, \psi))$ equipped with the norm 
\[
    \|x\|_{S_p^N \otimes L_p(\cM, \psi)} := \|[x]\|_{L_p(M_N(\cM), \tr_N \otimes \psi)}.
\]
We say that an operator $T: L_p(\cM, \psi) \to L_p(\cM, \psi)$ is \emph{$p$-completely bounded} if 
\[
    \|T\|_{p-cb} := \sup_{N \geq 1} \|T^{(N)}: S_p^N \otimes L_p(\cM, \psi) \to S_p^N \otimes L_p(\cM, \psi)\| < \infty.
\]
The matrix norms satisfy
\begin{equation} \label{Eqn=LpCrossProduct}
    \|\alpha \otimes x\|_{S^N_p \otimes L_p(\cM, \vphi)} = \|\alpha\|_{S^N_p} \|x\|_{L_p(\cM, \vphi)}, \qquad \alpha \in S^N_p, x \in L_p(\cM, \vphi).
\end{equation}
For semifinite $\cM$, this is an easy exercise; for arbitrary von Neumann algebras, this requires some analysis on the interplay between spatial derivatives and matrices. \\

We note that the norm on $S_p^N \otimes L_p(\cM, \psi)$ does not give an operator space structure on $L_p(\cM, \psi)$, as it does not satisfy the axioms. However, if $\cM$ is semifinite, then the notion of $p$-completely bounded maps coincides with the usual notion of completely bounded maps by \cite{Pisier98}.\\

We now define a $(p_1, \dots, p_n)$-multiplicatively bounded norm in a similar manner. Let $1 \leq p_1, \dots, p_n, p \leq \infty$ with $p^{-1} = \sum_{i=1}^n p_i^{-1}$. We define a map $T: L_{p_1}(\cM) \times \ldots \times L_{p_n}(\cM) \to L_p(\cM)$ to be $(p_1, \dots, p_n)$-multiplicatively bounded if
\[
    \|T\|_{(p_1, \dots, p_n)-mb} := \sup_{m \geq 1} \|T^{(m)}: S_{p_1}^m \otimes L_{p_1}(\cM) \times \ldots \times S^m_{p_n} \otimes L_{p_n}(\cM) \to S_p^m \otimes L_p(\cM)\| < \infty.
\]
This turns out to be the correct notion to prove our transference results. We note that this time, there does not seem to be a case for which this coincides with `normal' multiplicative boundedness, as \cite[Lemma 1.7]{Pisier98} does not generalise to the multilinear case. If $\cM$ is semifinite, then the above definition does coincide with the definition of $(p_1, \dots, p_n)$-multiplicative boundedness from \cite{CJKM} and \cite{CKV}. Even in the semifinite case, it is unclear if this definition corresponds to complete boundedness of some linear map on some appropriate tensor product; see \cite[Remark 2.1]{CKV}.

\subsection{Fourier multipliers}
Again let $G$ be a locally compact group. For a bounded function $\phi: G \to \C$, the associated Fourier multiplier $T_\phi: \cL G \to \cL G$ is given for $f \in L_1(G)$ by $\l(f) \mapsto \l(\phi f)$, whenever this map extends weak-* continuously.  This definition has been extended to $L_p(\cL G)$ for general locally compact groups in \cite{CaspersDeLaSalle}. However, this was done only for symbols $\phi \in M_{cb}A(G)$, i.e. those symbols for which $T_\phi$ defines a completely bounded multiplier on $\cL G$. We give a broader definition here as preparation for the multilinear definition. Define $\overline{\cL G}_{(\g)}$ to be the space of closed densely defined $\g$-homogeneous operators on $L_2(G)$. See \cite[Section III, IV]{TerpLN} for a definition and some properties. We use here the facts that this space contains $L_p(\cL G)$ and that the right hand side of \eqref{Eqn=FMforn=1} is always in $\overline{\cL G}_{(-1/p)}$ (\cite[III.(19) and Corollary III.34]{TerpLN}).\\

For $\phi \in L_\infty(G)$, define $T_\phi: \k_p^\th(L) \to \overline{\cL G}_{(-1/p)}$ by  
\begin{equation} \label{Eqn=FMforn=1}
    T_\phi(\k_p^\th(\l(f))) = \Delta^{\frac{1-\th}p} \l(\phi f) \Delta^{\frac\th p}, \qquad f \in C_c(G) \star C_c(G),\ 1 \leq p < \infty,\ \th \in [0,1]
\end{equation}
The map $T_\phi$ does not depend on the choice of $\th$; this follows from \eqref{Eqn=EqualEmbeddings} and some manipulations using the commutation formula \eqref{Eqn=CommutationFormula2}. When the image of $T_\phi$ lies in $L_p(\cL G)$ and $T_\phi$ extends continuously to a bounded map on $L_p(\cL G)$, we say that $\phi$ defines a $p$-Fourier multiplier. When the extension is moreover $p$-completely bounded on $L_p(\cL G)$, we say $\phi$ defines a $p$-cb Fourier multiplier. For $\phi \in M_{cb}A(G)$, the definition of the Fourier multiplier coincides with that of \cite{CaspersDeLaSalle}. For $p=1$, this can be shown by using \cite[Proposition-Definition 3.5]{CaspersDeLaSalle} and \eqref{Eqn=Kernel of single operator} below together with \cite[Theorem 6.2 (ii), (iv)]{CaspersDeLaSalle}; for other $p$, this follows by interpolation.\\

Let us now state some known results about when symbols define $p$- or $p$-cb Fourier multipliers. Let $A(G)$ be the Fourier algebra; it is defined as $A(G) = \{f * \tilde{g}: f, g \in L_2(G)\}$ where $\tilde{g}(s) = \overline{g(s^{-1})}$. It is isomorphic to $L_1(\cL G)$, the predual of $\cL G$, through the pairing 
\[
    \la \psi, \l(f) \ra = \int_G \psi(s) f(s) ds, \qquad \psi \in A(G),\ f \in L_1(G).
\]
From classical theory (see e.g. \cite[Proposition 5.1.2]{KaniuthLau}) it is known that $\phi \in M(A(G))$ if and only if $\phi$ defines a $\infty$-Fourier multiplier. In the same way as in \cite[Proof of Definition-Proposition 3.5]{CaspersDeLaSalle}, one proves that in this case, $\phi$ defines a $p$-Fourier multiplier for all $1 \leq p \leq \infty$. Trivially, $A(G) \subseteq M(A(G))$, and hence $A(G)$ provides us with plenty of symbols defining $p$-Fourier multipliers; we will use this fact later on. As mentioned above, if $\phi$ is a $\infty$-cb Fourier multiplier (i.e. $\phi \in M_{cb}A(G)$) then $\phi$ is also a $p$-cb Fourier multiplier for $1 \leq p \leq \infty$. This is proven in \cite[Definition-Proposition 3.5]{CaspersDeLaSalle}.\\

Let us now turn our attention to the multilinear case. In \cite{TodorovTurowska}, $M^{cb}_nA(G)$ was defined to be the space of all symbols $\phi \in L_\infty(G^{\times n})$ such that the map
\[
    (\l_{s_1}, \dots, \l_{s_n}) \mapsto \phi(s_1, \dots, s_n) \l_{s_1 \ldots s_n}
\]
extends to a multiplicatively bounded normal map $(\cL G)^{\times n} \to \cL G$. In \cite{CKV} multilinear Fourier multipliers on the noncommutative $L_p$-spaces were defined for unimodular groups $G$ as follows. Let $\phi \in L_\infty(G^{\times n})$ and $1\leq p_1, \ldots, p_n, p < \infty$ with $p^{-1}=\sum_{i=1}^n p_i^{-1}$. Consider the map $T_\phi: L^{\times n} \to \cL G$ defined by
\[
    T_\phi(\lambda(f_1),\ldots, \lambda(f_n)) = \int_{G^{\times n}} \phi(t_1,\ldots,t_n) f_1(t_1)\ldots f_n(t_n) \lambda_{t_1\ldots t_n} dt_1 \ldots dt_n
\]
for $f_i \in C_c(G) \star C_c(G)$. If this map takes values in $L_p(\cL G)$ and extends continuously to $L_{p_1}(\cL G) \times \ldots \times L_{p_n}(\cL G)$, then we say that $\phi$ defines a $(p_1, \dots, p_n)$-Fourier multiplier. The extension is again denoted by $T_\phi$. In case $p_i = \infty$, we replace $L_{p_i}(\cL G)$ by $C_\l^*(G)$ in the $i$'th coordinate. If the extension is $(p_1, \dots, p_n$)-multiplicatively bounded, then we say that $\phi$ defines a $(p_1, \dots, p_n)$-mb Fourier multiplier.\\

This definition works only for unimodular groups if $p < \infty$, since $L$ is not contained in $L_p(\cL G)$ otherwise. In Section \ref{Sect=FourierDef}, we will give the definition for non-unimodular groups.

\begin{remark} \label{Rmk=inftymultipliers}
We note that a priori, the set of symbols of $(\infty, \dots, \infty)$-mb Fourier multipliers is smaller than $M_n^{cb}A(G)$. However, these sets are actually the same. This follows for instance from a combination of our results and \cite[Theorem 5.5]{TodorovTurowska}. One does not need the complicated machinery of Section \ref{Sect=FouriertoSchur} however. It follows already from the proof of \cite[Theorem 5.5]{TodorovTurowska}, or from the alternative proof of the Fourier to Schur direction in \cite[Proposition 2.3]{CKV}, that it suffices to require that $T_\phi$ is bounded on $(C_\l^*(G))^{\times n}$.
\end{remark}

\subsection{Schatten classes and Schur multipliers}

We denote by $S_p(H)$ the standard Schatten classes on the Hilbert space $H$. If $(X, \mu)$ is some measure space, then $S_2(L_2(X))$ can be isometrically identified with the space of kernels $L_2(X \times X)$. Through this identification, a kernel $A \in L_2(X \times X)$ corresponds to the operator $(A\xi)(s) = \int_X A(s,t) \xi(t) dt$. This should be seen as a continuous version of matrix multiplication. We will make no distinction between an operator $A$ and its kernel. For $1 \leq p \leq 2 \leq p' \leq \infty$ with $\frac1p + \frac1{p'} = 1$, the dual pairing between $S_p(L_2(X))$ and $S_{p'}(L_2(X))$ is given by 
\begin{equation} \label{Eqn=KernelsDualPairing}
    \la A, B \ra_{p,p'} = \int_{X^{\times 2}} A(s,t) B(t,s) dt ds, \qquad A \in S_p(L_2(X)), B \in S_2(L_2(X)).
\end{equation}
This assignment is extended continuously for general $B \in S_{p'}(L_2(X))$. We refer to \cite[Section 1.2]{LafforgueDeLaSalle} for more details.\\ 

For $\phi \in L_\infty(X^{\times n+1})$ the associated Schur multiplier is the multilinear map $S_2(L_2(X))\times\ldots \times S_2(L_2(X)) \to S_2(L_2(X))$ determined by
\[
M_\phi( A_1,\ldots, A_n)(t_0,t_n) = \int_{X^{\times n-1}} \phi(t_0,\ldots,t_n) A_1(t_0,t_1)A_2(t_1,t_2)\ldots A_n(t_{n-1},t_n) dt_1 \ldots dt_{n-1}.
\]
It follows by Cauchy-Schwarz and a straightforward calculation that $M_\phi$ does indeed take values in $S_2(L_2(X))$ (see for instance \cite{CKV}). Now let $1\leq p,p_1,\ldots,p_n \leq \infty$, with $p^{-1}=\sum_{i=1}^n p_i^{-1}$. Restrict $M_\phi$ in the $i$-th input to $S_2(L_2(X)) \cap S_{p_i}(L_2(X))$. Assume that this restriction maps to $S_p(L_2(X))$ and has a bounded extension to $S_{p_1}(L_2(X)) \times \ldots \times S_{p_n}(L_2(X))$. Then we say that $\phi$ defines a $(p_1, \dots, p_n)$-Schur multiplier. Its extension is again denoted by $M_\phi$. If $M_\phi$ is $(p_1, \dots, p_n)$-multiplicatively bounded, we say that $\phi$ defines a $(p_1, \dots, p_n)$-mb Schur multiplier.\\

The following theorem is \cite[Theorem 2.2]{CKV}; see also \cite[Theorem 1.19]{LafforgueDeLaSalle} and \cite[Theorem 3.1]{CaspersDeLaSalle}. It will be the starting point for the proof of Theorem \ref{Thm=FouriertoSchur}. 

\begin{theorem} \label{Thm=Finite truncation}
Let $\mu$ be a Radon measure on a locally compact space $X$, and $\phi: X^{n+1} \to \C$ a continuous function. Let $K>0$. The following are equivalent for $1 \leq p_1, \dots, p_n, p \leq \infty$:
\begin{enumerate}[(i)]
    \item $\phi$ defines a bounded Schur multiplier $S_{p_1}(L_2(X)) \times \ldots \times S_{p_n}(L_2(X)) \to S_p(L_2(X))$ with norm less than $K$.
    \item For every $\sigma$-finite measurable subset $X_0$ in $X$, $\phi$ restricts to a bounded Schur multiplier $S_{p_1}(L_2(X_0)) \times \ldots \times S_{p_n}(L_2(X_0)) \to S_p(L_2(X_0))$ with norm less than $K$.
    \item For any finite subset $F = \{s_1, \dots, s_N\} \subset X$ belonging to the support of $\mu$, the symbol $\phi|_{F^{\times (n+1)}}$ defines a bounded Schur multiplier $S_{p_1}(\ell_2(F)) \times \ldots \times S_{p_2}(\ell_2(F)) \to S_p(\ell_2(F))$ with norm less than $K$.
\end{enumerate}
The same equivalence is true for the $(p_1, \ldots, p_n)-mb$ norms.
\end{theorem}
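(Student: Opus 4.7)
The plan is to prove the chain (i) $\Rightarrow$ (ii) $\Rightarrow$ (iii) $\Rightarrow$ (i), preserving the constant $K$ at each step; the $(p_1,\dots,p_n)$-mb statement will then follow because every map used in the argument tensors with $\id_{M_N(\C)}$ to give its matrix-level analogue. The implication (i) $\Rightarrow$ (ii) is immediate: extending kernels on $X_0 \times X_0$ by zero is an isometry $S_{p_i}(L_2(X_0)) \hookrightarrow S_{p_i}(L_2(X))$, and $M_{\phi|_{X_0^{\times(n+1)}}}$ is simply the restriction of $M_\phi$ to the image of these embeddings.

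For (ii) $\Rightarrow$ (iii), fix a finite $F = \{s_1,\dots,s_N\} \subseteq \supp(\mu)$. Choose, for every $m \in \N$, pairwise disjoint open neighborhoods $U_j^{(m)}$ of $s_j$ with $0 < \mu(U_j^{(m)}) < \infty$, shrinking so finely that the oscillation of $\phi$ on $U_{j_0}^{(m)} \times \cdots \times U_{j_n}^{(m)}$ vanishes as $m \to \infty$. Set $e_j^{(m)} := \mu(U_j^{(m)})^{-1/2} \chi_{U_j^{(m)}}$; then the map
\[
    J_m(A) := \sum_{j,k=1}^N A(j,k)\, e_j^{(m)} \otimes \overline{e_k^{(m)}}
\]
is an isometric embedding $S_{p_i}(\ell_2(F)) \hookrightarrow S_{p_i}(L_2(X_0))$, where $X_0 = \bigcup_{m,j} U_j^{(m)}$ is $\sigma$-finite. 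Substituting the kernels $J_m A_i$ into $M_\phi$, disjointness of supports forces all intermediate indices $j_i$ to match, the normalizations $\mu(U_{j_i}^{(m)})^{-1}$ cancel against the integrals $\int \chi_{U_{j_i}^{(m)}}$, and the oscillation hypothesis lets us replace $\phi(t_0,\dots,t_n)$ by $\phi(s_{j_0},\dots,s_{j_n})$ with vanishing error, yielding
\[
    \bigl\| M_\phi(J_m A_1,\dots,J_m A_n) - J_m M_{\phi|_{F^{\times(n+1)}}}(A_1,\dots,A_n) \bigr\|_{S_p} \longrightarrow 0.
\]
Since (ii) bounds the first term by $K \prod_i \|J_m A_i\|_{S_{p_i}} = K \prod_i \|A_i\|_{S_{p_i}(\ell_2(F))}$, taking $m \to \infty$ gives (iii).

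For (iii) $\Rightarrow$ (i), first reduce by density to $A_i \in S_{p_i}(L_2(X))$ whose kernels are finite sums $\sum \xi \otimes \overline{\eta}$ with $\xi, \eta \in C_c(X)$ supported in $\supp(\mu)$, so that all $A_i$ have joint compact support $Y$. For each $m$, take a finite measurable partition $\{V_j^{(m)}\}_{j=1}^{N_m}$ of $Y$ with $\mu(V_j^{(m)}) > 0$ and representatives $r_j^{(m)} \in V_j^{(m)} \cap \supp(\mu)$, chosen so fine that both the oscillation of $\phi$ on $V_{j_0}^{(m)} \times \cdots \times V_{j_n}^{(m)}$ and the oscillation of the (continuous) kernels of the $A_i$'s on $V_j^{(m)} \times V_k^{(m)}$ tend to $0$ uniformly in the indices. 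Setting
\[
    A_i^{(m)}(j,k) := \mu(V_j^{(m)})^{-1/2} \mu(V_k^{(m)})^{-1/2} \int_{V_j^{(m)} \times V_k^{(m)}} A_i(s,t)\, ds\, dt,
\]
the operator $J_m A_i^{(m)}$ is a step-function approximation of $A_i$ converging to $A_i$ in $S_{p_i}$. Applying (iii) at $F^{(m)} := \{r_j^{(m)}\}_{j=1}^{N_m}$ gives
\[
    \bigl\| M_\phi(J_m A_1^{(m)}, \dots, J_m A_n^{(m)}) \bigr\|_{S_p} \leq K \prod_i \|J_m A_i^{(m)}\|_{S_{p_i}},
\]
and the same telescoping-and-continuity argument as in step 2 shows that the left-hand side converges to $\|M_\phi(A_1,\dots,A_n)\|_{S_p}$, while the right-hand side converges to $K \prod_i \|A_i\|_{S_{p_i}}$.

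The main obstacle is the simultaneous control, inside the $(n-1)$-fold kernel integral defining $M_\phi$, of the step-function convergence $J_m A_i^{(m)} \to A_i$ in $S_{p_i}$ and of the replacement of $\phi$ by its sampled version on the partition. This requires a careful H\"older / dominated-convergence bookkeeping on $Y^{n+1}$, carried out in close analogy to the linear arguments in \cite[Theorem 1.19]{LafforgueDeLaSalle} and \cite[Theorem 3.1]{CaspersDeLaSalle}; the $(p_1,\dots,p_n)$-mb version is identical after replacing every matrix entry $A_i(j,k)$ throughout by an element of $M_N(\C)$, since restriction, the smearing map $J_m$ and the discretization $A_i \mapsto A_i^{(m)}$ all commute with amplification.
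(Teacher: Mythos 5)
First, note that the paper does not prove this statement: it is quoted verbatim as \cite[Theorem 2.2]{CKV} (with pointers to \cite[Theorem 1.19]{LafforgueDeLaSalle} and \cite[Theorem 3.1]{CaspersDeLaSalle}), so your proof can only be compared with the arguments in those references. Your architecture --- (i)$\Rightarrow$(ii) by extension of kernels by zero, (ii)$\Rightarrow$(iii) by smearing matrix units over small disjoint neighbourhoods of the points of $F$ via normalized indicators, (iii)$\Rightarrow$(i) by discretizing along finite measurable partitions (your $J_m A_i^{(m)}$ is exactly the compression $Q_m A_i Q_m$ onto step functions) --- is precisely the approach of those references, and the observation that $J_m$, the compressions and the restrictions are all implemented by isometries, hence commute with matrix amplification, correctly yields the mb statement.

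There is, however, one genuine gap: the claimed $S_p$-norm convergences, namely $\|M_\phi(J_mA_1,\dots,J_mA_n)-J_mM_{\phi|_{F^{\times(n+1)}}}(A_1,\dots,A_n)\|_{S_p}\to 0$ in step (ii)$\Rightarrow$(iii) and the convergence of $M_\phi(J_mA_1^{(m)},\dots,J_mA_n^{(m)})$ to $M_\phi(A_1,\dots,A_n)$ in $S_p$ in step (iii)$\Rightarrow$(i). What the oscillation/Cauchy--Schwarz argument actually gives is smallness of these errors in $S_2$ (the error is $M_{\phi-\psi_m}$ applied to fixed Hilbert--Schmidt operators, with $\|\phi-\psi_m\|_{L_\infty(Y^{n+1})}\to 0$). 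For $p<2$ this does \emph{not} imply smallness in $S_p$: the error operator has no useful rank bound (its kernel on each block $U_{j_0}\times U_{j_n}$ is a genuine continuous function of $(t_0,t_n)$, not block-constant), and a kernel that is small in sup-norm on a square can have arbitrarily large, even infinite, $S_1$-norm. The standard repair, which is how the cited proofs proceed, is to never estimate the error in $S_p$: one pairs against a finite-rank $B\in S_{p'}\cap S_2$ (such $B$ are norming for $S_p$), so that the main term is controlled exactly by the hypothesis, $|\langle J_mM_{\phi|_F}(A_1,\dots,A_n),J_mB\rangle|=|\langle M_{\phi|_F}(A_1,\dots,A_n),B\rangle|$ etc., while the error term is estimated by $\|\mathrm{error}_m\|_{S_2}\|B\|_{S_2}\to 0$; equivalently, one uses lower semicontinuity of $\|\cdot\|_{S_p}$ under weak ($S_2$-) convergence. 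With this modification (and the corresponding $\liminf$ inequalities replacing your norm convergences), your proof goes through; as literally written, the two displayed convergence claims are unjustified for $p<2$.
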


Now let $G$ again be a locally compact group. In general one has $L_p(\cL G) \cap S_p(L_2(G)) = \{0\}$, so we cannot directly link Fourier and Schur multipliers as in the case $p = \infty$. In Section \ref{Sect=SchurToFourier} we will use the following trick from \cite{CaspersDeLaSalle} to circumvent this difficulty. Let $F \subseteq G$ be a relatively compact Borel subset of $G$ with positive measure, and $P_F: L_2(G) \to L_2(F)$, $f \mapsto 1_F f$ the orthogonal projection. Then for $x \in L_p(\cL G)$ one can \emph{formally} define the operator $P_F x P_F$, which lies in $S_p(L_2(G))$. We refer to \cite[Proposition 3.3, Theorem 5.1]{CaspersDeLaSalle} for details. \\

Let $F \subseteq G$ compact. We calculate the kernel of $P_F \Delta^a \lambda(f) \Delta^b P_F$: for $g \in C_c(G)$ and $s \in G$ we have
\[
\begin{split}
    (P_F \Delta^a \lambda(f) \Delta^b P_F g)(s) &= 1_F(s) \Delta^a(s) \int_G f(t) \Delta^b(t^{-1} s) 1_F(t^{-1}s) g(t^{-1} s) dt \\
    &= 1_F(s) \Delta^a(s) \int_G f(st) \Delta^b(t^{-1}) 1_F(t^{-1}) g(t^{-1}) dt \\
    &= 1_F(s) \Delta^a(s) \int_G f(st^{-1}) \Delta^b(t) 1_F(t) \Delta(t^{-1}) g(t)  dt.
\end{split}
\]
Hence the kernel of $P_F \Delta^a \lambda(f) \Delta^b P_F$ is given by
\begin{equation} \label{Eqn=Kernel of single operator}
    (s,t) \mapsto 1_F(s) \Delta^a(s)f(st^{-1}) \Delta^{b-1}(t) 1_F(t).
\end{equation}

\section{The definition of multilinear Fourier multipliers for non-unimodular groups} \label{Sect=FourierDef}

In this section, $G$ is an arbitrary locally compact group. Let $1 \leq p_1, \dots, p_n, p \leq \infty$ with $p^{-1} = \sum_{i=1}^n p_i^{-1}$ and $\phi \in L_\infty(G^{\times n})$. In this section we explore what a suitable definition for the Fourier multiplier $T_\phi: L_{p_1}(\cL G) \times \ldots L_{p_n}(\cL G) \to L_p(\cL G)$ might be. Our first requirement is that it must coincide with the linear definition for $n=1$, i.e. it must satisfy \eqref{Eqn=FMforn=1}. \\

Secondly, we would like the definition to be compatible with interpolation arguments. More precisely, if $T_\phi$ is bounded as a map $\cL G \times \ldots \times \cL G \to \cL G$ and as a map $L_{p_1}(\cL G) \times \ldots \times L_{p_n}(\cL G) \to L_p(\cL G)$, then it should also be bounded as map $L_{\frac{p_1}\nu}(\cL G) \times \ldots \times L_{\frac{p_n}\nu}(\cL G) \to L_{\frac{p}\nu}(\cL G)$ for all $0 < \nu < 1$. This means that the definition must be `compatible' with the definition on $(\cL G)^{\times n}$, in the sense that in each input, the maps $T_\phi$ must coincide on the intersection space of some compatible couple (with respect to some $\th$). This tells us what the Fourier multiplier should look like on the dense subsets $\k_{p_i}^\th(L)$:

\begin{definition}[``Wrong definition"] \label{Def=WrongDef}
Let $\th_1, \dots, \th_n, \th \in [0,1]$ and $x_i = \k_{p_i}^{\th_i}(\l(f_i))$ for $i = 1, \dots, n$, where $f_i \in C_c(G) * C_c(G)$. We set
\begin{equation} \label{Eqn=InterpolationRequirement}
    T^{\th_1, \dots, \th_n, \th}_{\phi, \text{int}}(x_1, \dots, x_n) = \k^{\th}_p(T_\phi(\l(f_1), \dots, \l(f_n))). 
\end{equation}
\end{definition}
Definition \ref{Def=WrongDef} might seem reasonable at first glance; it coincides with the linear definition for $n=1$, and it is the only option if we want interpolation results. However, there are several problems with Definition \ref{Def=WrongDef}. Firstly, the definition depends on the choice of embeddings, which is not an issue in the linear case. Secondly, there are several properties of multilinear Fourier multipliers on unimodular groups which do not carry over. This includes for instance \cite[Lemma 4.3 and Lemma 4.4]{CJKM}, which are crucial in the proof of the transference from Fourier to Schur multipliers. Moreover, if we want to prove an approximate intertwining property as in \eqref{Eqn=AmenableIntertwining}, Corollary \ref{Cor=NecessaryCondition} tells us that the definition of the Fourier multiplier has to `preserve products of linear multipliers', in the sense that
\[
    T_{\phi}(x_1, \dots, x_n) = T_{\phi_1}(x_1) \ldots T_{\phi_n}(x_n)
\]
whenever $\phi(s_1, \dots, s_n) = \phi_1(s_1) \ldots \phi_n(s_n)$. Defition \ref{Def=WrongDef} does not do this. This means that there is essentially no hope of proving the transference from Schur to Fourier multipliers either.\\

The above requirement on the preservation of products leads us to consider instead the following definition. Let $\th_i \in [0,1]$ and set $a_i = \frac{1-\theta_i}{p_i}$ and $b_i = \frac{\th_i}{p_i}$, so that $\k_{p_i}^{\th_i}(x) = \Delta^{a_i} x \Delta^{b_i}$. Now for $f_i \in C_c(G) \star C_c(G)$, we formally define the Fourier multiplier corresponding to $\th_1, \dots, \th_n$ by
\begin{equation} \label{Eqn=FourierDefV1}
\begin{split}
    T_{\phi,(\th_1, \dots, \th_n)}(\k^{\th_1}_{p_1}(\l(f_1)),\ldots, \k^{\th_n}_{p_n}(\l(f_n))) = &\int_{G^{\times n}} \phi(t_1,\ldots,t_n) f_1(t_1)\ldots f_n(t_n) \times \\
    & \Delta^{a_1} \lambda_{t_1} \Delta^{b_1 + a_2} \lambda_{t_2} \ldots \Delta^{b_{n-1} + a_n} \lambda_{t_n} \Delta^{b_n} dt_1 \ldots dt_n.
\end{split}
\end{equation}
A priori, it is not clear how to define the integral in \eqref{Eqn=FourierDefV1}. After all, the integrand 
\[
    H(t_1, \dots, t_n) := \phi(t_1,\ldots,t_n) f_1(t_1)\ldots f_n(t_n) \Delta^{a_1} \lambda_{t_1} \Delta^{b_1 + a_2} \lambda_{t_2} \ldots \Delta^{b_{n-1} + a_n} \lambda_{t_n} \Delta^{b_n}
\]
is a function that has unbounded operators as values. However, on closer inspection, the `unbounded part' of this operator doesn't really depend on the integration variables. Indeed, using the commutation formula \eqref{Eqn=CommutationFormula}, we can write
\[
\begin{split}
    &H(t_1, \dots, t_n) \\
    &= \phi(t_1, \dots, t_n) f_1(t_1) \ldots f_n(t_n) \Delta^{a_1}(t_1) \Delta^{a_1 + a_2 + b_1}(t_2) \ldots \Delta^{\sum_{i=1}^n a_i + \sum_{i=1}^{n-1} b_i}(t_n) \l_{t_1\ldots t_n} \Delta^{1/p} \\
    &=  \phi(t_1, \dots, t_n) (\Delta^{\beta_1} f_1)(t_1) \ldots (\Delta^{\beta_n} f_n)(t_n) \l_{t_1\ldots t_n} \cdot \Delta^{1/p}, \qquad \beta_j = \sum_{i=1}^j a_i + \sum_{i=1}^{j-1} b_i. \\
\end{split}
\]
Note here that the functions $\Delta^{\beta_i} f_i$ are still in $C_c(G) \star C_c(G)$ by \eqref{Eqn=MultwithDelta}. Hence, a more rigorous way to define the Fourier multiplier is 
\[
    T_{\phi,(\th_1, \dots, \th_n)}(\k^{\th_1}_{p_1}(\l(f_1)),\ldots, \k^{\th_n}_{p_n}(\l(f_n))) = T_\phi(\l(\Delta^{\beta_1} f_1), \ldots, \l(\Delta^{\beta_n} f_n)) \Delta^{1/p}.
\]
However, we will keep the notation from \eqref{Eqn=FourierDefV1}. The integral is justified through the above arguments. The latter expression also makes clear that \eqref{Eqn=FourierDefV1} takes values in the space of closed densely defined $(-1/p)$-homogeneous operators on $L_2(G)$ (see \cite[III.(19) and Corollary III.34]{TerpLN}). Just as in the linear case, it is not clear that \eqref{Eqn=FourierDefV1} takes values in $L_p(\cL G)$ in general; this will be part of the assumptions. \\

It turns out that the operator $T_{\phi, (\th_1, \dots, \th_n)}$ in \eqref{Eqn=FourierDefV1} does not depend on the choice of $\theta_i$'s:

\begin{proposition} \label{Prop=IndependenceOfTheta}
Let $1 \leq p_1, \dots, p_n, p < \infty$ and $\th_1, \dots, \th_n \in [0,1]$. The maps $T_{\phi,(\th_1, \dots, \th_n)}$ and $T_{\phi,(0,\dots,0)}$ coincide on the space $\k^0_{p_1}(L) \times \ldots \times \k^0_{p_n}(L)$. Consequently, if one of the maps has image in $L_p(\cL G)$ and extends continuously to $L_{p_1}(\cL G) \times \ldots \times L_{p_n}(\cL G)$, then the other does as well and these extensions are equal. 
\end{proposition}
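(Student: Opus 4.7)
My plan is to show that the integrand appearing in \eqref{Eqn=FourierDefV1}, once simplified via the commutation formula, depends only on the operators $x_i$ themselves and not on the parameters $\th_i$ used to express them as $\k^{\th_i}_{p_i}(\l(f_i))$. The argument is essentially a bookkeeping exercise with the exponents of $\Delta$.

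First I would note that by \eqref{Eqn=EqualEmbeddings}, the domain $\k_{p_1}^{\th_1}(L)\times\ldots\times\k_{p_n}^{\th_n}(L)$ of $T_{\phi,(\th_1,\dots,\th_n)}$ coincides with $\k_{p_1}^0(L)\times\ldots\times\k_{p_n}^0(L)$, so both maps make sense on the latter product. Fix $y_i = \k^0_{p_i}(\l(h_i)) = \Delta^{1/p_i}\l(h_i)$ with $h_i \in C_c(G)\star C_c(G)$. A direct application of \eqref{Eqn=CommutationFormula2} (combined with its rearrangement $\l(f)\Delta^z = \Delta^z \l(\Delta^{-z} f)$) shows that $y_i = \Delta^{a_i}\l(\Delta^{b_i} h_i)\Delta^{b_i} = \k^{\th_i}_{p_i}(\l(f_i))$ for $f_i := \Delta^{b_i} h_i$, which still lies in $C_c(G)\star C_c(G)$ by \eqref{Eqn=MultwithDelta}.

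Next I would invoke the rewriting derived in the paragraph preceding the proposition: after pushing all $\Delta$-factors to the right via \eqref{Eqn=CommutationFormula}, both maps take the form
\[
T_\phi\bigl(\l(\Delta^{\gamma_1}k_1),\ldots,\l(\Delta^{\gamma_n}k_n)\bigr)\Delta^{1/p},
\]
where the $T_\phi$ here denotes the formal integral $\int \phi(t_1,\ldots,t_n) k_1(t_1)\ldots k_n(t_n)\l_{t_1\ldots t_n}\,dt$. For the $(\th_1,\ldots,\th_n)$ version applied to $y_i$, we have $k_j = f_j = \Delta^{b_j}h_j$ and $\gamma_j = \beta_j = \sum_{i=1}^{j} a_i + \sum_{i=1}^{j-1} b_i$, giving exponent $\beta_j + b_j = \sum_{i=1}^{j}(a_i+b_i) = \sum_{i=1}^{j} 1/p_i$. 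For the $(0,\ldots,0)$ version applied to $y_i$, we have $k_j = h_j$ and $\gamma_j = \sum_{i=1}^{j} 1/p_i$. Since the two expressions produce the same integrand, the operators agree.

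For the second statement, once equality on $\k_{p_1}^0(L)\times\ldots\times\k_{p_n}^0(L)$ is established, and using the density of each $\k_{p_i}^0(L)$ in $L_{p_i}(\cL G)$, a continuous extension of either map to $L_{p_1}(\cL G)\times\ldots\times L_{p_n}(\cL G)$ is automatically a continuous extension of the other with the same norm, by uniqueness of continuous extensions from dense subsets. The main (and only) obstacle is the careful verification of the exponent identity $\beta_j + b_j = \sum_{i=1}^{j} 1/p_i$; everything else follows from the commutation formulas \eqref{Eqn=CommutationFormula2} and \eqref{Eqn=MultwithDelta} already recorded.
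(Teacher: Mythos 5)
Your proposal is correct and follows essentially the same route as the paper: both arguments rest on the substitution $f_i = \Delta^{b_i}h_i$ (the paper writes it as $g_i = \Delta^{-b_i}f_i$) via \eqref{Eqn=CommutationFormula2}, followed by the observation that after commuting all powers of $\Delta$ through, the exponent attached to the $j$-th slot is $\sum_{i=1}^{j}(a_i+b_i)=\sum_{i=1}^{j}1/p_i$, independent of the $\th_i$. The only cosmetic difference is that you compare exponents in the pre-derived form $T_\phi(\l(\Delta^{\beta_1}f_1),\ldots,\l(\Delta^{\beta_n}f_n))\Delta^{1/p}$, whereas the paper performs the commutation directly inside the integral; the density argument for the extension statement is also as in the paper.
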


\begin{proof}
Recall that by \eqref{Eqn=EqualEmbeddings}, $\k_{p_i}^{\th_i}(L) = \k_{p_i}^0(L)$ for $i = 1 \dots, n$. For any such $i$, take $a_i, b_i$ as above, i.e. so that $\k_{p_i}^\th(x) = \Delta^{a_i} x \Delta^{b_i}$. Let $f_i \in C_c(G) \star C_c(G)$ and set $g_i = \Delta^{-b_i} f_i$. Then by \eqref{Eqn=CommutationFormula2}, $\Delta^{a_i} \l(f_i) \Delta^{b_i} = \Delta^{1/p_1} \l(g_i) =: x_i$, i.e. $\k_{p_i}^{\th_i}(\l(f_i)) = \k_{p_i}^0(\l(g_i))$. By \eqref{Eqn=CommutationFormula} we find
\[
\begin{split}
    T_{\phi,(\th_1, \dots, \th_n)}(x_1, \dots, x_n) = &\int_{G^{\times n}} \phi(t_1,\ldots,t_n) f_1(t_1)\ldots f_n(t_n) \times \\
    & \Delta^{a_1} \lambda_{t_1} \Delta^{b_1 + a_2} \lambda_{t_2} \ldots \Delta^{b_{n-1} + a_n} \lambda_{t_n} \Delta^{b_n} dt_1 \ldots dt_n \\
    = & \int_{G^{\times n}} \phi(t_1,\ldots,t_n) \Delta^{-b_1}(t_1) f_1(t_1) \Delta^{-b_2}(t_2) f_2(t_2) \ldots \Delta^{-b_n}(t_n)f_n(t_n) \times \\
    &  \Delta^{1/p_1} \lambda_{t_1} \Delta^{1/p_2} \ldots \Delta^{1/p_n} \lambda_{t_n}  dt_1 \ldots dt_n  \\
    = &T_{\phi,(0, \dots, 0)}(\Delta^{1/p_1} \l(g_1), \dots, \Delta^{1/p_n} \l(g_n)) = T_{\phi,(0, \dots, 0)}(x_1, \dots, x_n).
\end{split}
\]
\end{proof}

With this issue out of the way, we can now define Fourier multipliers independent of the choice of $\th_i$'s:

\begin{definition}[``Correct definition"] \label{Def=CorrectDef}
Let $1 \leq p_1, \dots, p_n, p \leq \infty$ with $p^{-1} = \sum_{i=1}^n p_i^{-1}$. Also let $\phi \in L_\infty(G^{\times n})$. For $i = 1, \dots, n$, take any $a_i, b_i \in [0,1]$ such that $a_i + b_i = p_i^{-1}$. If the map 
\[
    T_\phi: \k_{p_1}^{0}(L) \times \ldots \times \k_{p_n}^0(L) \to \overline{\cL G}_{(-1/p)}
\] 
which is given for $x_i = \Delta^{a_i} \lambda(f_i) \Delta^{b_i}$ with $f_i \in C_c(G) \star C_c(G)$ by
\begin{equation} \label{Eqn=Fourier def}
\begin{split}
    T_\phi(x_1,\ldots, x_n) = &\int_{G^{\times n}} \phi(t_1,\ldots,t_n) f_1(t_1)\ldots f_n(t_n) \times \\
    & \Delta^{a_1} \lambda_{t_1} \Delta^{b_1 + a_2} \lambda_{t_2} \ldots \Delta^{b_{n-1} + a_n} \lambda_{t_n} \Delta^{b_n} dt_1 \ldots dt_n,
\end{split}
\end{equation}
takes values in $L_p(\cL G)$ and extends boundedly to $L_{p_1}(\cL G) \times \ldots \times L_{p_n}(\cL G)$ in the norm topology (in case $p_i = \infty$ for some $i$, we use the space $C_\lambda^*(G)$ instead of $L_\infty(\cL G) = \cL G$ in the $i$'th leg) then we say that $\phi$ defines a $(p_1, \dots, p_n)$-Fourier multiplier. We denote the extension by $T_\phi$, or $T_{\phi}^{p_1, \dots, p_n}$ when we wish to emphasize the domain of the operator. This is especially useful when writing an operator norm, since writing out the full domain and codomain generally makes equations too long. If $T_\phi$ is $(p_1, \dots, p_n)$-multiplicatively bounded, then we say that $\phi$ defines a $(p_1, \dots, p_n)$-mb Fourier multiplier.
\end{definition}

Clearly, for $n=1$, Definition \ref{Def=CorrectDef} reduces to \eqref{Eqn=FMforn=1}. It does not give the problems that Definition \ref{Def=WrongDef} does; as we saw already, it does not depend on the choice of embeddings. Moreover, the properties \cite[Lemma 4.3 and 4.4]{CJKM} do carry over, as we show in Lemmas \ref{Lem=Lemma 4.3} and \ref{Lem=Lemma4.4b}. Finally, it preserves products in the following more general way: if $\phi$ is such that there exist $m < n$ and $\phi_1: G^{\times m} \to \C$, $\phi_2: G^{\times n-m} \to \C$ such that
\[
    \phi(s_1, \dots, s_n) = \phi_1(s_1, \dots, s_m) \phi_2(s_{m+1}, \dots, s_n),
\]
then 
\begin{equation} \label{Eqn=FourierProducts}
    T_{\phi}(x_1, \dots, x_n) = T_{\phi_1}(x_1, \dots, x_m) T_{\phi_2}(x_{m+1}, \dots, x_n).
\end{equation} However, we have to give up interpolation results in general. The only instances where interpolation might work is when the $L_p$-spaces `in the middle' are all equal to $C_\l^*(G)$. Indeed, in that case we can take $\th_1 = 0$, $\th_n = 1$, $\th = \frac{p}{p_n}$, so that the Fourier multiplier $T_\phi$ from Definition \ref{Def=CorrectDef} also satisfies \eqref{Eqn=InterpolationRequirement}. We note that for $n > 2$ and $p_i < \infty$ for some $2 \leq i \leq n-1$, \eqref{Eqn=Fourier def} is not of the form \eqref{Eqn=InterpolationRequirement} for any $\th_1, \dots, \th_n, \th$, and hence $T_\phi$ cannot be a compatible morphism for any `usual' compatible couple structures on $(\cL G, L_{p_i}(\cL G))_{\th_i}$.

\begin{remark} \label{Rmk=InterpolationTrouble}
Although \eqref{Eqn=InterpolationRequirement} is a necessary condition for the Fourier multiplier to allow interpolation, we have not been able to prove that it is a sufficient condition. The issue is that to prove that the mapping for $(p_1, \dots, p_n)$ is compatible with the one for $(\infty, \dots, \infty)$, we have to prove that they coincide on the entire intersection space $L_p(\cL G) \cap \cL G$ (within the appropriate compatible couple structure). However, we do not know whether $L$ is dense in this space in the intersection norm. In fact, for $p > 2$, we do not even know if $\cT_\vphi^2$ is dense in the intersection norm. 
\end{remark}

\begin{remark}
We could have just taken (the extension of) the map $T_\phi^{\frac12, \dots, \frac12}$ as the definition of our Fourier multiplier. This would have allowed us to skip Proposition \ref{Prop=IndependenceOfTheta}, and all the proofs further on in this paper would still work by approximating only with elements in the central embedding. However, the more general definition allows some flexibility to choose convenient embeddings for notation or to avoid some technicalities (in particular in Lemma \ref{Lem=Lemma 4.3}).
\end{remark}

Let us now prove some properties of the multilinear Fourier multiplier for later use. Lemmas \ref{Lem=Lemma 4.3}, \ref{Lem=Lemma4.4} and \ref{Lem=Lemma4.4b} are used in the proof of Theorem \ref{Thm=FouriertoSchur}. Here Lemma \ref{Lem=Lemma 4.3} generalises \cite[Lemma 4.3]{CJKM} and Lemma \ref{Lem=Lemma4.4b} generalises \cite[Lemma 4.4]{CJKM}. Since the proofs of these two lemmas require careful bookkeeping with modular functions, we will give the full details. The proof of \cite[Lemma 4.4]{CJKM} was omitted, but it is not that trivial; our argument fills that gap.

\begin{lemma} \label{Lem=Lemma 4.3}
Let $1 \leq p_j, p \leq \infty$ and fix some $1 \leq i \leq n$. Suppose that $\phi: G^{\times n} \to \C$ is bounded and measurable and set for $r,t,r' \in G$:
\[
    \bar{\phi}(s_1, \dots, s_n; r,t,r') := \phi(rs_1, \dots, s_it, t^{-1} s_{i+1}, \dots, s_n r').
\]
Then $\phi$ defines a $(p_1, \dots, p_n)$-Fourier multiplier (resp. $(p_1, \dots, p_n)$-mb Fourier multiplier) iff $\bar{\phi}(\: \cdot \:; r,t,r')$ defines a $(p_1, \dots, p_n)$-Fourier multiplier (resp. $(p_1, \dots, p_n)$-mb Fourier multiplier). In that case, for $x_j \in L_{p_j}(\cL G)$,
\begin{equation} \label{Eqn=Lemma4.3}
    T_{\bar{\phi}(\: \cdot\:;r,t,r')}(x_1, \dots, x_n) = \l_r^* T_\phi\big(\l_r x_1, x_2, \dots, x_i \l_t, \l_t^* x_{i+1}, \dots, x_n\l_{r'}\big) \l_{r'}^*.
\end{equation}
Further, we have 
\[
    \|T^{p_1, \dots, p_n}_\phi\| = \|T^{p_1, \dots, p_n}_{\bar{\phi}(\: \cdot\:; r,t,r')}\|
\]
and $(r,t,r') \mapsto T_{\bar{\phi}(\: \cdot\:; r,t,r')}$ is strongly continuous. In the multiplicatively bounded case, we have for any $N \geq 1$
\[
    \|(T^{p_1, \dots, p_n}_\phi)^{(N)}\| = \|(T^{p_1, \dots, p_n}_{\bar{\phi}(\: \cdot\:; r,t,r')})^{(N)}\|
\]
as maps $S_{p_1}^N[L_{p_1}(\cL G)] \times \ldots \times S_{p_n}^N[L_{p_n}(\cL G)] \to S_p^N[L_p(\cL G)]$, and $(r,t,r') \mapsto T_{\bar{\phi}(\: \cdot\:; r,t,r')}^{(N)}$ is strongly continuous.
\end{lemma}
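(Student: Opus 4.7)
The plan is to verify the identity \eqref{Eqn=Lemma4.3} by direct computation on a dense subspace and then extend it by continuity. I would work with the embedding $\th_j = 0$, so that the inputs take the form $x_j = \k^0_{p_j}(\l(f_j)) = \Delta^{1/p_j}\l(f_j)$ with $f_j \in C_c(G) \star C_c(G)$; by Proposition \ref{Prop=IndependenceOfTheta} this is no loss of generality. The first step is to express each of the ``decorated'' arguments $\l_r x_1$, $x_i \l_t$, $\l_t^* x_{i+1}$, $x_n \l_{r'}$ again in the form $\k^0_{p_j}(\l(\tilde f_j))$ up to a positive scalar: using \eqref{Eqn=CommutationFormula} and left-invariance of Haar measure one obtains
\[
    \l_r x_1 = \Delta^{-1/p_1}(r)\, \Delta^{1/p_1}\l(L_r f_1), \qquad \l_t^* x_{i+1} = \Delta^{1/p_{i+1}}(t)\, \Delta^{1/p_{i+1}}\l(L_{t^{-1}} f_{i+1}),
\]
while a direct calculation of $\l(f)\l_s$ (using $\int f(xs)\,dx = \Delta(s^{-1})\int f(x)\,dx$) gives
\[
    x_i \l_t = \Delta^{1/p_i}\l(h_i^{(t)}), \qquad x_n \l_{r'} = \Delta^{1/p_n}\l(h_n^{(r')}),
\]
with $h_i^{(t)}(u) = \Delta(t^{-1}) f_i(ut^{-1})$ and $h_n^{(r')}(u) = \Delta(r'^{-1}) f_n(ur'^{-1})$ (no extra prefactor this time).

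With these formulas in hand I would apply \eqref{Eqn=Fourier def} to compute the right-hand side of \eqref{Eqn=Lemma4.3} and then change variables in the resulting integral via $s_1 = ru_1$, $s_i = u_i t$, $s_{i+1} = t^{-1}u_{i+1}$, $s_n = u_n r'$. The Jacobians of these four substitutions are $1$, $\Delta(t)$, $1$, $\Delta(r')$ respectively, and they \emph{exactly} absorb the scalar factors $\Delta(t^{-1})$, $\Delta(r'^{-1})$ hidden inside $h_i^{(t)}$ and $h_n^{(r')}$, so that the new integrand becomes $\bar\phi(u_1,\dots,u_n;r,t,r') \cdot f_1(u_1)\cdots f_n(u_n)$ times the remaining operator chain
\[
    \Delta^{1/p_1}\l_{ru_1}\Delta^{1/p_2}\l_{u_2}\cdots \Delta^{1/p_i}\l_{u_it}\Delta^{1/p_{i+1}}\l_{t^{-1}u_{i+1}}\Delta^{1/p_{i+2}}\l_{u_{i+2}}\cdots \Delta^{1/p_n}\l_{u_n r'}.
\]
This chain is untangled by the splittings $\l_{xy} = \l_x\l_y$ together with two applications of \eqref{Eqn=CommutationFormula}: the left commutation $\Delta^{1/p_1}\l_r = \Delta^{1/p_1}(r)\l_r\Delta^{1/p_1}$, and the middle collapse $\l_t\Delta^{1/p_{i+1}}\l_t^* = \Delta^{-1/p_{i+1}}(t)\Delta^{1/p_{i+1}}$. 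The resulting scalars $\Delta^{1/p_1}(r)\Delta^{-1/p_{i+1}}(t)$ cancel precisely against the prefactors $\Delta^{-1/p_1}(r)\Delta^{1/p_{i+1}}(t)$ from the first step, leaving $\l_r\cdot[\Delta^{1/p_1}\l_{u_1}\cdots \Delta^{1/p_n}\l_{u_n}]\cdot\l_{r'}$. The integral then reassembles into $\l_r\, T_{\bar\phi(\cdot\,;r,t,r')}(x_1,\dots,x_n)\,\l_{r'}$, and multiplying on the outside by $\l_r^*$ and $\l_{r'}^*$ gives \eqref{Eqn=Lemma4.3} on the dense subspace.

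From this point the rest is straightforward. Since multiplication by $\l_s$ on either side is isometric on each $L_{p_j}(\cL G)$ by \eqref{Eqn=Mult-lambda_s}, the pointwise identity immediately yields the norm equality $\|T_\phi^{p_1,\dots,p_n}\| = \|T_{\bar\phi(\cdot\,;r,t,r')}^{p_1,\dots,p_n}\|$ and the equivalence of boundedness for $\phi$ and $\bar\phi(\cdot\,;r,t,r')$, and extends by density to the full $L_{p_1}(\cL G)\times\cdots\times L_{p_n}(\cL G)$. Strong continuity of $(r,t,r') \mapsto T_{\bar\phi(\cdot\,;r,t,r')}$ follows from the right-hand side of \eqref{Eqn=Lemma4.3}, using boundedness of $T_\phi$ and the fact that $s\mapsto \l_s y$ is $L_{p_j}$-norm continuous for fixed $y\in L_{p_j}(\cL G)$ when $p_j<\infty$ (respectively, strongly operator continuous on $C^*_\l(G)$ when $p_j=\infty$). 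The multiplicatively bounded statement follows the same pattern at the level of matrix amplifications, since $1_N\otimes\l_s$ is isometric on $S^N_{p_j}\otimes L_{p_j}(\cL G)$ and the pointwise identity tensorises automatically. The main obstacle throughout is the bookkeeping of modular-function scalars: each argument modification, each variable substitution, and each commutation of $\l_s$ past a $\Delta^z$ produces a factor of $\Delta$ evaluated at $r$, $t$ or $r'$, and the identity only works because all of these factors cancel in exactly the pattern dictated by the exponents $1/p_1$ and $1/p_{i+1}$.
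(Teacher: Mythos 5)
Your proposal is correct and follows essentially the same route as the paper: verify \eqref{Eqn=Lemma4.3} on the dense subspace $\k_{p_1}^{0}(L)\times\cdots\times\k_{p_n}^{0}(L)$ by rewriting the decorated arguments as embedded elements, changing variables, and checking that all modular factors cancel, then deduce the norm equality from \eqref{Eqn=Mult-lambda_s} (together with the inverse substitution $r\mapsto r^{-1}$ etc.\ for the reverse inequality), strong continuity from continuity of $s\mapsto\l_s y$, and the mb case by tensorising with $\iota_N\otimes\l_s$. The only cosmetic difference is that you fix the uniform embedding $\th_j=0$ and so must track the prefactors $\Delta^{-1/p_1}(r)$ and $\Delta^{1/p_{i+1}}(t)$ explicitly (which you do correctly), whereas the paper chooses $a_1=b_i=a_{i+1}=b_n=0$ precisely so that these prefactors never appear.
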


\begin{proof}
It is straightforward to check that for $s \in G$, $f \in C_c(G)$, we have $\l_s\l(f) = \l(\l_s(f)) = \l(f(s^{-1} \: \cdot \:))$; moreover, we have
\begin{equation} \label{Eqn=Lemma4.3eq1} 
    \l(f)\l_s = \int_G f(t) \l_{ts} dt = \Delta(s^{-1}) \int_G f(ts^{-1}) \l_t dt =  \Delta(s^{-1}) \l(f(\: \cdot\: s^{-1})).
\end{equation}
We will only make a choice for some of the embeddings and leave the rest open; this is notationally more convenient. Let $x_j = \Delta^{a_j} \lambda(f_j) \Delta^{b_j} \in L_{p_j}(\cL G)$, with $f_j \in C_c(G) \star C_c(G)$ and $a_1=b_i = a_{i+1} = b_n = 0$ (hence $b_1 = \frac1{p_1}, a_i = \frac1{p_i}$, etc). We compute
\[
\begin{split}
    &T_{\bar{\phi}(\: \cdot\:;r,t,r')}(x_1, \dots, x_n) \\
    = &\int_{G^{\times n}} \bar{\phi}(s_1, \dots, s_n; r,t,r') f_1(s_1) \dots f_n(s_n) \lambda_{s_1} \Delta^{b_1 + a_2} \l_{s_2} \dots \Delta^{b_{n-1} + a_n}\l_{s_n} ds_1 \dots ds_n \\
    = &\int_{G^{\times n}} \phi(s_1, \dots, s_n) f_1(r^{-1}s_1) \dots \Delta(t)^{-1} f_i(s_it^{-1}) f_{i+1}(ts_{i+1}) \dots \Delta(r')^{-1} f_n(s_n (r')^{-1}) \times \\
    & \l_{r^{-1}s_1} \Delta^{b_1 + a_2} \dots \Delta^{b_{i-1} + a_i}\l_{s_i s_{i+1}} \Delta^{b_{i+1}+ a_{i+2}} \dots \Delta^{b_{n-1} + a_n} \l_{s_n (r')^{-1}} ds_1 \dots ds_n \\
    = & \l_r^* T_\phi\big(\widetilde{x_1}, x_2, \dots, \widetilde{x_i}, \widetilde{x_{i+1}}, \dots, \widetilde{x_n}\big) \l_{r'}^*.
\end{split}
\]
Here
\[
\begin{split}
    \widetilde{x_1} &:= \lambda(f_1(r^{-1}\: \cdot\:)) \Delta^{b_1}; \qquad \widetilde{x_i} := \Delta^{-1}(t) \Delta^{a_i} \lambda(f_i(\: \cdot\: t^{-1})); \\
    \widetilde{x_{i+1}} &:= \lambda(f_{i+1}(t\:\cdot\:)) \Delta^{b_{i+1}}; \qquad \widetilde{x_n} :=  \Delta^{-1}(r')\Delta^{a_n} \lambda(f_n(\:\cdot\: (r')^{-1})).
\end{split}
\] 
By \eqref{Eqn=Lemma4.3eq1} we can write
\[
    \widetilde{x_n} = \Delta^{a_n} \l(f_n) \l_{r'} = x_n \l_{r'}
\]
and similarly
\[
    \widetilde{x_1} = \l_r x_1; \qquad \widetilde{x_i} = x_i \l_t; \qquad \widetilde{x_{i+1}} = \l_t^* x_{i+1}.
\]
Combining everything together we conclude 
\[
    T_{\bar{\phi}(\: \cdot\:;r,t,r')}(x_1, \dots, x_n) = \l_r^* T_\phi\big(\l_r x_1, x_2, \dots, x_i \l_t, \l_t^* x_{i+1}, \dots, x_n\l_{r'}\big) \l_{r'}^*.
\]
By \eqref{Eqn=Mult-lambda_s}, we have 
\[
\begin{split}
    \|T_{\bar{\phi}(\: \cdot\:;r,t,r')}(x_1, \dots, x_n)\|_p &=  \|T_\phi\big(\l_r x_1, x_2, \dots, x_i \l_t, \l_t^* x_{i+1}, \dots, x_n\l_{r'}\big)\|_p \\
    &\leq \|T^{p_1, \dots, p_n}_{\phi}\| \|x_1\|_{p_1} \ldots \|x_n\|_{p_n}.
\end{split}
\]
Hence, on the dense subsets of elements $x_j$ as above we have $\|T_{\widetilde{\phi}(\: \cdot\: ;r,t,r')}\| \leq \|T_{\phi}\|$. If we set $\psi = \tilde{\phi}(\: \cdot\:; r,t,r')$, then $\tilde{\psi}(\: \cdot\:; r^{-1}, t^{-1}, (r')^{-1}) = \phi$. Hence, applying the above result to $\tilde{\psi}(\: \cdot\:; r^{-1}, t^{-1}, (r')^{-1})$ yields the reverse inequality. By density, we conclude that the first three statements of the lemma hold. By \cite[Lemma 2.3]{JungeSherman}, the (left or right) multiplication with $\l_s$, $s \in G$ is strongly continuous in $s$. This implies the strong continuity of $(r,t,r') \mapsto T_{\tilde{\phi}(\: \cdot\:;r,t,r')}$. \\

Now assume $\phi$ defines a $(p_1, \dots, p_n)$-mb Fourier multiplier and let $N \geq 1$. Denote by $\iota_N$ the $N \times N$-identity matrix. Then by writing out the definitions and using \eqref{Eqn=Lemma4.3} we find, for $x_i \in S_{p_i}^N \otimes L_{p_i}(\cL G)$,
\[
\begin{split}
    &T_{\bar{\phi}(\cdot;r,t,r')}^{(N)}(x_1, \dots, x_n) \\
    = &(\iota_N \otimes \l_r^*) T_{\phi}^{(N)}((\iota_N \otimes \l_r) x_1, \dots, x_i(\iota_N \otimes \l_t), (\iota_N \otimes \l_t^*) x_{i+1}, \dots, x^n(\iota_N \otimes \l_{r'})) (\iota_N \otimes \l_{r'}^*).
\end{split}
\]
Hence, by a complete/matrix amplified version of the above arguments, we deduce the last two statements.
\end{proof}

\begin{lemma} \label{Lem=Lemma4.4}
Let $1 \leq p_j, p \leq \infty$ and fix some $1 \leq i \leq n$. Suppose that $\phi: G^{\times n} \to \C$ defines a $(p_1, \dots, p_n)$-Fourier multiplier and $\phi_i: G \to \C$ defines a $p_i$-Fourier multiplier. Set 
\[
\bar{\phi}(s_1, \dots, s_n) = \phi(s_1, \dots, s_n) \phi_i(s_i).
\]
Then $\bar{\phi}$ defines a $(p_1, \dots, p_n)$-Fourier multiplier and for $x_j \in L_{p_j}(\cL G)$,
\begin{equation} \label{Eqn=Lemma4.4a}
    T_{\bar{\phi}}(x_1, \dots, x_n) = T_\phi(x_1, \dots, x_{i-1}, T_{\phi_i}(x_i), x_{i+1}, \dots, x_n).
\end{equation}
In particular, 
\begin{equation} \label{Eqn=Lemma4.4abound}
    \|T_{\bar{\phi}}^{p_1, \dots, p_n}\| \leq \|T_{\phi}^{p_1, \dots, p_n}\| \|T_{\phi_i}: L_{p_i}(\cL G) \to L_{p_i}(\cL G)\|.
\end{equation}
\end{lemma}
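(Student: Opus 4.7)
The strategy is to verify \eqref{Eqn=Lemma4.4a} on a dense subset of tuples and then propagate it by continuity. Fix $x_j = \Delta^{a_j}\l(f_j)\Delta^{b_j}$ with $f_j \in C_c(G)\star C_c(G)$ and $a_j+b_j = 1/p_j$. Once the identity is verified for such tuples, its right-hand side is a composition of two bounded multilinear maps (namely $T_{\phi_i}$ in slot $i$ followed by $T_\phi$), which furnishes a bounded extension of $T_{\bar\phi}$; this simultaneously gives that $\bar\phi$ defines a $(p_1,\dots,p_n)$-Fourier multiplier and the bound \eqref{Eqn=Lemma4.4abound}, and by density of $\k^0_{p_j}(L)$ in $L_{p_j}(\cL G)$ the identity extends to all of $L_{p_1}(\cL G)\times\cdots\times L_{p_n}(\cL G)$.

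By the linear formula \eqref{Eqn=FMforn=1}, $T_{\phi_i}(x_i) = \Delta^{a_i}\l(\phi_i f_i)\Delta^{b_i}$. Since $\phi_i f_i$ is bounded with compact support but generally not in $C_c(G)\star C_c(G)$, \eqref{Eqn=Fourier def} cannot be invoked directly on the tuple $(x_1,\dots,T_{\phi_i}(x_i),\dots,x_n)$. The workaround is approximation: by density of $\k^{\th_i}_{p_i}(L)$ in $L_{p_i}(\cL G)$, pick $g_m \in C_c(G)\star C_c(G)$ with $y_m := \Delta^{a_i}\l(g_m)\Delta^{b_i}\to T_{\phi_i}(x_i)$ in $L_{p_i}(\cL G)$; a concrete choice is $g_m := u_m*(\phi_i f_i)*u_m$ for an approximate identity $(u_m)\subset C_c(G)$. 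Continuity of $T_\phi$ in its $i$-th variable yields
\[
T_\phi(x_1,\dots,T_{\phi_i}(x_i),\dots,x_n) = \lim_{m\to\infty} T_\phi(x_1,\dots,y_m,\dots,x_n),
\]
and since $y_m\in\k^0_{p_i}(L)$, definition \eqref{Eqn=Fourier def} identifies the $m$-th term with the integral $I_m$ obtained from the defining expression for $T_{\bar\phi}(x_1,\dots,x_n)$ by substituting $g_m$ for $\phi_i f_i$.

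The main obstacle is thus the final convergence $I_m \to T_{\bar\phi}(x_1,\dots,x_n)$ in $L_p(\cL G)$. To handle this I would use the rewriting just before Proposition~\ref{Prop=IndependenceOfTheta}: commuting all $\Delta$-factors to the right via \eqref{Eqn=CommutationFormula} puts $I_m$ into the form $\l(F_m)\Delta^{1/p}$, with $F_m$ an iterated convolution of the $\Delta$-twisted compactly supported continuous functions $\Delta^{\beta_j}f_j$ ($j\neq i$) with $\Delta^{\beta_i}g_m$, and analogously $T_{\bar\phi}(x_1,\dots,x_n) = \l(F)\Delta^{1/p}$ with $\Delta^{\beta_i}\phi_i f_i$ in position $i$. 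The mollifier choice gives $g_m \to \phi_i f_i$ in every $L_q(G)$, $1\leq q<\infty$, with uniformly bounded compact supports; Young's inequality then upgrades this to uniform convergence $F_m \to F$ with uniform compact support, which suffices to conclude $\l(F_m)\Delta^{1/p}\to \l(F)\Delta^{1/p}$ in $L_p(\cL G)$ — immediate from Plancherel \eqref{Eqn=PlancherelIdentity} for $p=2$, and for general $p$ by interpolation between Plancherel and the trivial $\|\l(h)\|_\infty\leq\|h\|_1$ bound applied to the compactly supported function $F_m - F$.
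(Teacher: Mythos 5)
Your overall architecture is the same as the paper's (verify \eqref{Eqn=Lemma4.4a} on the dense set $\k^0_{p_1}(L)\times\cdots\times\k^0_{p_n}(L)$, then extend by density and boundedness), and you correctly isolate the point the paper's two-line proof glosses over: since $\phi_i f_i\notin C_c(G)\star C_c(G)$, the value of the extension $T_\phi$ at $(x_1,\dots,T_{\phi_i}(x_i),\dots,x_n)$ is only accessible by approximation, and mollification is the right device. The gap is in your final convergence step. The claim that uniform convergence $F_m\to F$ with uniformly compact supports forces $\l(F_m)\Delta^{1/p}\to\l(F)\Delta^{1/p}$ in $L_p(\cL G)$ ``by interpolation between Plancherel and $\|\l(h)\|_\infty\le\|h\|_1$'' is exactly Hausdorff--Young, and interpolating those two endpoints only reaches targets $L_q(\cL G)$ with $2\le q\le\infty$. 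For $1\le p<2$ --- allowed by the lemma and needed where it is invoked in \eqref{Eqn=CVEstimate}, where $p$ may equal $1$ --- the underlying inequality is simply false: already for $G=\R$ one has $\l(1_{[0,1]})\notin L_1(\cL\R)$ (its Fourier transform is not integrable), so $\|\l(h)\Delta^{1/p}\|_{L_p}$ is not controlled by $\|h\|_\infty$ on a fixed compact support, and uniform convergence of kernels does not yield $L_p$-convergence. The same defect infects the earlier step: you need $y_m=\Delta^{a_i}\l(g_m)\Delta^{b_i}\to T_{\phi_i}(x_i)$ in $L_{p_i}(\cL G)$ for your \emph{concrete} mollifier (abstract density produces some approximating sequence, but not one whose kernels you can then track in the integral), and for $p_i<2$ this is again not justified by the mechanism you state.

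Both holes are repairable without changing your plan. For the second: write $y_m=\l(\Delta^{a_i}u_m)\,T_{\phi_i}(x_i)\,\l(\Delta^{-b_i}u_m)$ via \eqref{Eqn=CommutationFormula2}, and use that multiplication by a uniformly bounded net converging strongly to $1$ converges pointwise in $L_{p_i}$-norm (cf.\ the appeal to \cite[Lemma 2.3]{JungeSherman} in Lemma \ref{Lem=Lemma 4.3}); for $p_i=\infty$, $\l(u_m)$ is an approximate unit of $C^*_\l(G)$. For the first: you do not need norm convergence of $I_m$ to $\l(F)\Delta^{1/p}$ at all --- you already know $I_m$ converges in $L_p$-norm to the right-hand side of \eqref{Eqn=Lemma4.4a}, so it suffices to identify that limit in any weaker Hausdorff topology compatible with the norm. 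Pairing against $y\in\k^{\th'}_{p'}(L)$ turns $\langle I_m,y\rangle$ into an explicit integral of $F_m$ against a fixed continuous compactly supported function, where your convergence $F_m\to F$ (even just in $L^1$, which also rescues the case $n=1$, where uniform convergence fails) finishes the argument; since such $y$ separate points of $L_p(\cL G)$, the two limits coincide.
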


\begin{proof}
For $x_j \in \k_{p_j}^0(L)$ (or any other embedding), it follows directly from writing out the definitions that \eqref{Eqn=Lemma4.4a} holds (cf.  \eqref{Eqn=FMforn=1}). By density, \eqref{Eqn=Lemma4.4a} holds for general $x_j \in L_{p_j}(\cL G)$ which implies \eqref{Eqn=Lemma4.4abound}, so $T_{\bar{\phi}}^{p_1, \dots, p_n}$ is bounded.
\end{proof}

\begin{lemma} \label{Lem=Lemma4.4b}
Let $1 \leq p_1, \dots, p_n \leq \infty$. Let $q_j^{-1} = \sum_{i=j}^n p_i^{-1}$ and suppose that $\phi_j: G \to \C$ defines a $q_j$-Fourier multiplier for $1 \leq j \leq n$. Set 
\[
    \bar{\phi}(s_1, \dots, s_n) = \phi_1(s_1 \ldots s_n) \phi_2(s_2 \ldots s_n) \ldots \phi_n(s_n).
\]
Then $\bar{\phi}$ defines a $(p_1, \dots p_n)$-Fourier multiplier and for $x_i \in L_{p_i}(\cL G)$ we have
\begin{equation} \label{Eqn=Lemma4.4b}
    T_{\bar{\phi}}(x_1, \dots, x_n) = T_{\phi_1}(x_1 T_{\phi_2}(x_2 \ldots T_{\phi_n}(x_n)\ldots )).
\end{equation}
\end{lemma}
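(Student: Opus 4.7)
My plan is to proceed by induction on $n$. The base case $n=1$ reduces to the identity $T_{\phi_1}(x_1) = T_{\phi_1}(x_1)$, which is trivial. For the inductive step, set
\[
\widetilde{\psi}(s_2,\dots,s_n) = \phi_2(s_2\cdots s_n)\,\phi_3(s_3\cdots s_n)\cdots \phi_n(s_n).
\]
By the induction hypothesis, $\widetilde{\psi}$ defines a $(p_2,\dots,p_n)$-Fourier multiplier and $T_{\widetilde{\psi}}(x_2,\dots,x_n) = T_{\phi_2}(x_2\cdots T_{\phi_n}(x_n)\cdots) \in L_{q_2}(\cL G)$. Since $q_1^{-1} = p_1^{-1}+q_2^{-1}$, it then suffices to show that $\overline{\phi}(s_1,\dots,s_n) = \phi_1(s_1 s_2 \cdots s_n)\,\widetilde{\psi}(s_2,\dots,s_n)$ defines a $(p_1,\dots,p_n)$-Fourier multiplier with
\[
T_{\overline{\phi}}(x_1,\dots,x_n) = T_{\phi_1}\bigl(x_1\, T_{\widetilde{\psi}}(x_2,\dots,x_n)\bigr).
\]

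To prove this identity, I would fix $x_i = \Delta^{a_i}\lambda(f_i)\Delta^{b_i}$ with $f_i\in C_c(G)\star C_c(G)$ (by Proposition \ref{Prop=IndependenceOfTheta} the choice of $a_i,b_i$ is immaterial), and compute $T_{\overline{\phi}}(x_1,\dots,x_n)$ from Definition \ref{Def=CorrectDef}. The key step is to use \eqref{Eqn=CommutationFormula} iteratively to push every intermediate $\Delta^{b_i+a_{i+1}}$ past each $\lambda_{s_j}$, collapsing all the left regular operators into a single $\lambda_{s_1\cdots s_n}$ and leaving a product of scalar factors $\Delta^{A_i}(s_i)$ together with a final $\Delta^{1/p}$ on the right, where $A_i := \sum_{j<i}p_j^{-1} + a_i$. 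Substituting $t = s_1 s_2 \cdots s_n$ in the outermost variable (at the cost of a Jacobian $\Delta(s_2\cdots s_n)^{-1}$) yields an expression of the form
\[
T_{\overline{\phi}}(x_1,\dots,x_n) = \int_G \phi_1(t)\,g(t)\,\Delta^{a_1}\lambda_t\,\Delta^{b_1+1/q_2}\,dt
= \Delta^{a_1}\lambda(\phi_1 g)\,\Delta^{b_1+1/q_2},
\]
where $g$ is an explicit scalar-valued inner integral over $s_2,\dots,s_n$.

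The other half of the computation then shows that this same $g$ arises from $x_1\,T_{\widetilde{\psi}}(x_2,\dots,x_n)$: by the IH, $T_{\widetilde{\psi}}(x_2,\dots,x_n)$ can be written (after the same $\Delta$-commutations and an inner change of variables $u = s_2\cdots s_n$) as $\lambda(h)\Delta^{1/q_2}$ for an explicit $h\in C_c(G)$, and then using \eqref{Eqn=CommutationFormula2} the product $x_1 \cdot \lambda(h)\Delta^{1/q_2} = \Delta^{a_1}\lambda(f_1 \ast (\Delta^{b_1}h))\Delta^{b_1+1/q_2}$ produces precisely $\Delta^{a_1}\lambda(g)\Delta^{b_1+1/q_2}$ after matching the convolution formula with the substitution $s_1 = t(s_2\cdots s_n)^{-1}$. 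Applying the linear Fourier multiplier $T_{\phi_1}$ via \eqref{Eqn=FMforn=1} then gives exactly the expression obtained for $T_{\overline{\phi}}(x_1,\dots,x_n)$.

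The main obstacle is purely bookkeeping: tracking the modular factors produced by the iterated commutations and the change of variables, and verifying that $g$ indeed lies in $C_c(G)\star C_c(G)$ so that the central embedding $\kappa^0_{q_1}(\lambda(g))$ is a legitimate element to feed into $T_{\phi_1}$. For the latter point, one writes $f_1 = f_1' \ast f_1''$ with $f_1',f_1''\in C_c(G)$ and observes that the innermost integrand defining $g$ is compactly supported continuous, so $g = f_1' \ast (f_1''\ast(\cdots))$ is a convolution of two $C_c$-functions. Once the identity on the dense subspace is established, the norm bound $\|T_{\overline{\phi}}(x_1,\dots,x_n)\|_p \leq \|T_{\phi_1}\|\,\|x_1\|_{p_1}\,\|T_{\widetilde{\psi}}(x_2,\dots,x_n)\|_{q_2}$ follows from Lemma \ref{Lem=Lemma4.4} applied to the two-variable reduction and the induction hypothesis, which together with density provides the extension to all of $L_{p_1}(\cL G)\times\cdots\times L_{p_n}(\cL G)$.
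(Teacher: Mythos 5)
Your proposal is correct, and it is the same basic strategy as the paper's: induct on $n$, verify the identity on the dense subspace $\k^0_{p_1}(L)\times\cdots\times\k^0_{p_n}(L)$ by explicit computation with the commutation formulae \eqref{Eqn=CommutationFormula}--\eqref{Eqn=CommutationFormula2} and the change of variables $t=s_1\cdots s_n$ (with the modular Jacobian $\Delta((s_2\cdots s_n)^{-1})$, which you track correctly), and then conclude by boundedness of the $T_{\phi_j}$, H\"older and density.

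The one genuine organizational difference lies in how the inductive step is run. The paper starts from the nested right-hand side $T_{\phi_1}(x_1 T_{\phi_2}(\cdots))$ and unfolds it; after pushing the convolution with $f_1$ inside, it must recognize the inner expression as $T_{\bar\psi_{s_1}}(x_2,\dots,x_n)$ for the \emph{translated} outer symbol $\psi=\phi_1(s_1\,\cdot\,)\phi_2$, which depends on the integration variable $s_1$. This forces the paper to strengthen the induction hypothesis to: ``there is a single compactly supported $g$ such that $T_{\bar\psi}(x_2,\dots,x_n)=\Delta^{1/q_2}\l(\psi g)$ for \emph{all} admissible outer symbols $\psi$''. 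You instead compute the flat left-hand side $T_{\bar\phi}$ directly (collapsing the $\l_{s_i}$'s and substituting $t=s_1\cdots s_n$) and separately compute $x_1 T_{\widetilde\psi}(x_2,\dots,x_n)$ as $\Delta^{a_1}\l(f_1*(\Delta^{b_1}h))\Delta^{b_1+1/q_2}$, matching the two kernels by Fubini; this avoids the uniform-in-$\psi$ strengthening at the price of writing out the full multilinear kernel. Both routes hinge on the same two points you correctly flag: the argument fed to the linear $T_{\phi_1}$ must be exhibited as $\k_{q_1}^\theta(\l(F))$ with $F\in C_c(G)\star C_c(G)$ (your factorization $f_1=f_1'*f_1''$ handles this, since $h$ is merely bounded and compactly supported, not continuous), and the final norm bound is just H\"older applied to $\|T_{\phi_1}\|_{q_1\to q_1}\|x_1\|_{p_1}\|T_{\widetilde\psi}(x_2,\dots,x_n)\|_{q_2}$ --- your appeal to Lemma \ref{Lem=Lemma4.4} for this last step is unnecessary but harmless.
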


\begin{proof}
We first show \eqref{Eqn=Lemma4.4b} on the dense subset $\k^0_{p_i}(L) \times \ldots \times \k^0_{p_n}(L)$. The lemma then follows from boundedness of the $T_{\phi_i}$ together with H\"olders inequality.\\

We make the slightly stronger claim that for any $\phi_2, \dots, \phi_n$ as in the assumptions and any $x_i \in \k^0_{p_i}(L)$, there exists a compactly supported function $g: G \to \C$ such that for all $\phi_1$ as in the assumptions,
\begin{equation} \label{Eqn=Lemm4.4bExtended}
    T_{\bar{\phi}}(x_1, \dots, x_n) = \Delta^{\frac1{q_1}} \l(\phi_1g) = T_{\phi_1}(x_1 T_{\phi_2}(x_2 \ldots T_{\phi_n}(x_n)\ldots)).
\end{equation}
We will prove \eqref{Eqn=Lemm4.4bExtended} with induction on $n$. We will need this intermediate step in order to expand the outer Fourier multiplier in the right-hand side of \eqref{Eqn=Lemma4.4b}. \\

The case $n=1$ follows directly from \eqref{Eqn=FMforn=1}. Now assume that \eqref{Eqn=Lemm4.4bExtended} holds for any choice of $\phi_1, \dots, \phi_{n-1}$ as above and $x_1, \dots, x_{n-1}$ with $x_i \in \k^0_{p_i'}(L)$. Fix functions $\phi_1, \dots, \phi_n$ as in the assumptions and $x_1, \dots, x_n$ so that $x_i = \Delta^{\frac1{p_i}} \l(f_i)$ for $f_i \in C_c(G) \star C_c(G)$. Take $g$ compactly supported such that, for any $\psi: G \to \C$ defining a $q_2$-Fourier multiplier,
\begin{equation} \label{Eqn=gdef}
     T_{\psi}(x_2 T_{\phi_3}(x_3 \ldots T_{\phi_n}(x_n)\ldots)) = \Delta^{\frac1{q_2}} \l(\psi g) = T_{\bar{\psi}}(x_2, \dots, x_n)
\end{equation}
where $\bar{\psi}(s_2, \dots, s_n) = \psi(s_2 \ldots s_n) \phi_3(s_3\ldots s_n) \ldots \phi_n(s_n)$. We calculate 
\[
\begin{split}
    T_{\phi_1}(x_1 T_{\phi_2}(x_2 \ldots T_{\phi_n}(x_n)\ldots)) &\stackrel{\eqref{Eqn=gdef}}= T_{\phi_1}(\Delta^{\frac1{p_1}} \l(f_1) \Delta^{\frac1{q_2}} \l(\phi_2 g)) \\
    &\stackrel{\eqref{Eqn=CommutationFormula2}}= T_{\phi_1}(\Delta^{\frac1{q_1}} \l((\Delta^{-\frac1{q_2}} f_1)*(\phi_2g))) \\
    &\stackrel{\eqref{Eqn=FMforn=1}}= \Delta^{\frac1{q_1}} \l(\phi_1 ((\Delta^{-\frac1{q_2}} f_1)*(\phi_2g))).
\end{split}
\]
This shows the second equality from \eqref{Eqn=Lemm4.4bExtended}. Continuing the previous equation, 
\[ 
\begin{split}
    T_{\phi_1}(x_1 T_{\phi_2}(x_2 \ldots T_{\phi_n}(x_n)\ldots)) &= \int_G \phi_1(t) \left(\int_G (\Delta^{-\frac1{q_2}} f_1)(s_1) (\phi_2g)(s_1^{-1} t) ds_1\right) \Delta^{\frac1{q_1}} \l_t dt \\
    &= \int_G \int_G \phi_1(s_1t) (\Delta^{-\frac1{q_2}} f_1)(s_1) (\phi_2g)(t) \Delta^{\frac1{q_1}} \l_{s_1t} dt ds_1 \\
    &\stackrel{\eqref{Eqn=CommutationFormula}}= \int_G f_1(s_1) \Delta^{\frac1{p_1}} \l_{s_1} \int_G \phi_1(s_1t) \phi_2(t) g(t) \Delta^{\frac1{q_2}} \l_t dt ds_1 \\
    &= \int_G f_1(s_1) \Delta^{\frac1{p_1}} \l_{s_1} \Delta^{\frac1{q_2}} \l(\phi_1(s_1 \: \cdot\:) \phi_2 g) ds_1. \\
\end{split}
\]
Applying \eqref{Eqn=gdef} again but now with $\phi_1(s_1 \: \cdot\:) \phi_2$ in place of $\psi$, we get:
\[
\begin{split}
    &T_{\phi_1}(x_1 T_{\phi_2}(x_2 \ldots T_{\phi_n}(x_n)\ldots)) \\
    =& \int_G f_1(s_1) \Delta^{\frac1{p_1}}\l_{s_1}  \int_{G^{\times n-1}} \phi_1(s_1s_2 \ldots s_n) \phi_2(s_2 \ldots s_n) \phi_3(s_3 \ldots s_n) \ldots \phi_n(s_n) \times\\
    & \qquad \qquad \qquad \qquad \qquad \qquad f_2(s_2) \ldots f_n(s_n) \Delta^{\frac1{p_2}} \l_{s_2} \ldots \Delta^{\frac1{p_n}} \l_{s_n} ds_2 \ldots ds_n ds_1 \\
    &= T_{\bar{\phi}}(x_1, \dots, x_n).
\end{split}
\]
\end{proof}

Finally, we calculate a convenient form for the kernel of a corner of the Fourier multiplier for use in Theorem \ref{thm:amenable-intertwining}.

\begin{lemma} \label{Lem=Fourier kernel}
Let $F \subseteq G$ compact and $x_i = \Delta^{a_i} \lambda(f_i) \Delta^{b_i} \in L_{p_i}(\cL G)$ as above for $f_i \in C_c(G) \star C_c(G)$. Then the kernel of $P_F T_\phi(x_1, \dots, x_n) P_F$ is given by
\[
\begin{split}
    (t_0,t_n) \mapsto 1_F(t_0) 1_F(t_n) \int_{G^{\times n-1}}  & \phi(t_0t_1^{-1},\ldots,t_{n-1}t_n^{-1}) f_1(t_0t_1^{-1})\ldots f_n(t_{n-1}t_n^{-1})   \times \\
    & \Delta^{a_1}(t_0) \Delta^{b_1 + a_2}(t_1) \ldots \Delta^{b_n}(t_n) \Delta((t_1 \ldots t_n)^{-1}) dt_1\ldots dt_{n-1}.
\end{split}
\]
\end{lemma}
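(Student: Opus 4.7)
The plan is to start from the reformulation given just after Definition~\ref{Def=CorrectDef}: iterated application of the commutation formula \eqref{Eqn=CommutationFormula} moves each $\Delta$-power past the $\lambda_{s_j}$'s to the right, turning the integrand of \eqref{Eqn=Fourier def} into
\[
\phi(s_1,\ldots,s_n)\prod_{i=1}^n f_i(s_i)\cdot\prod_{j=1}^n \Delta^{\beta_j}(s_j)\cdot \lambda_{s_1\cdots s_n}\,\Delta^{1/p}, \qquad \beta_j=\sum_{i=1}^j a_i+\sum_{i=1}^{j-1}b_i,
\]
with the telescoping identity $\beta_n+b_n=1/p$. Then I would apply $P_F$ on both sides, test the resulting operator against $g\in L_2(G)$ at a point $t_0\in F$, and use
\[
(\lambda_{s_1\cdots s_n}\Delta^{1/p}P_F g)(t_0) = \Delta^{1/p}\bigl((s_1\cdots s_n)^{-1}t_0\bigr)\cdot 1_F\bigl((s_1\cdots s_n)^{-1}t_0\bigr)\cdot g\bigl((s_1\cdots s_n)^{-1}t_0\bigr),
\]
to express $(P_FT_\phi(x_1,\ldots,x_n)P_F g)(t_0)$ as a multiple integral in $s_1,\ldots,s_n$. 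Compact support of $F$ together with that of the $f_i$ makes everything absolutely convergent, so Fubini applies freely.

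Next I would perform the substitution $s_i=t_{i-1}t_i^{-1}$ for $i=1,\ldots,n$, keeping $t_0$ fixed. This is a bijection $G^n\to G^n$ with the crucial property $(s_1\cdots s_n)^{-1}t_0=t_n$, so the argument of $g$ becomes exactly the desired output variable of the kernel. Iterating the single-variable identity $\int h(s^{-1}a)\,d\mu(s)=\int h(t)\Delta^{-1}(t)\,d\mu(t)$, which follows from the inversion formula $\int f(s^{-1})\Delta^{-1}(s)\,ds=\int f(s)\,ds$ combined with left-invariance, one coordinate at a time gives the Jacobian
\[
ds_1\cdots ds_n = \Delta^{-1}(t_1)\cdots\Delta^{-1}(t_n)\,dt_1\cdots dt_n = \Delta\bigl((t_1\cdots t_n)^{-1}\bigr)\,dt_1\cdots dt_n,
\]
which is precisely the factor $\Delta((t_1\cdots t_n)^{-1})$ appearing in the target formula.

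Finally I would collect the remaining modular factors. Expanding $\Delta^{\beta_j}(t_{j-1}t_j^{-1})=\Delta^{\beta_j}(t_{j-1})\Delta^{-\beta_j}(t_j)$ yields a telescoping product which, combined with the $\Delta^{1/p}(t_n)$ factor from the operator action, collapses via $\beta_1=a_1$, $\beta_j-\beta_{j-1}=a_j+b_{j-1}$ (for $j\ge 2$), and $1/p-\beta_n=b_n$ to exactly $\Delta^{a_1}(t_0)\Delta^{b_1+a_2}(t_1)\cdots\Delta^{b_{n-1}+a_n}(t_{n-1})\Delta^{b_n}(t_n)$. Since $1_F((s_1\cdots s_n)^{-1}t_0)=1_F(t_n)$ after substitution, reading off the coefficient of $g(t_n)$ with $t_1,\ldots,t_{n-1}$ integrated out produces the claimed kernel. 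The hard part will be the Jacobian computation: although each individual substitution is a simple composition of inversion and left translation, the $\Delta^{-1}(t_i)$ contributions must be iterated carefully to arrive at the clean product $\Delta((t_1\cdots t_n)^{-1})$; once this is in hand, the remainder is algebraic bookkeeping with the exponents $a_i$, $b_i$, $\beta_j$.
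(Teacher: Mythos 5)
Your proposal is correct and follows essentially the same route as the paper's proof: both evaluate $(P_F T_\phi(x_1,\ldots,x_n)P_F g)(t_0)$ directly and perform the iterated change of variables $s_i = t_{i-1}t_i^{-1}$ with Jacobian $\Delta^{-1}(t_i)$ per coordinate; your preliminary normalisation of the integrand to $\prod_j\Delta^{\beta_j}(s_j)\,\lambda_{s_1\cdots s_n}\Delta^{1/p}$ (which the paper itself sets up after \eqref{Eqn=FourierDefV1}) just repackages the same modular bookkeeping, and your telescoping of the exponents via $\beta_1=a_1$, $\beta_{j+1}-\beta_j=b_j+a_{j+1}$, $1/p-\beta_n=b_n$ reproduces the stated kernel exactly.
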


\begin{proof}
Let $g \in C_c(G) \star C_c(G)$ and $t_0 \in G$. Then the function $P_F T_\phi^{\th_1, \dots, \th_n}(x_1, \dots, x_n) P_F g$ is given by
\[
\begin{split}
    t_0 &\mapsto 1_F(t_0) \int_{G^{\times n}} \phi(t_1,\ldots,t_n) f_1(t_1)\ldots f_n(t_n) \times \\
    & \qquad (\Delta^{a_1} \lambda_{t_1} \Delta^{b_1 + a_2} \lambda_{t_2} \ldots \Delta^{b_{n-1} + a_n} \lambda_{t_n} \Delta^{b_n} P_F g)(t_0) dt_1 \ldots dt_n \\
    &= 1_F(t_0) \int_{G^{\times n}} \phi(t_1,\ldots,t_n) f_1(t_1)\ldots f_n(t_n)  \Delta^{a_1}(t_0) \Delta^{b_1 + a_2}(t_1^{-1}t_0) \times \ldots  \\
    & \qquad \Delta^{b_{n-1} + a_n}(t_{n-1}^{-1} \dots t_1^{-1} t_0) (\Delta^{b_n}1_F g)(t_n^{-1} \dots t_1^{-1} t_0)  dt_1 \ldots dt_n \\
    &= 1_F(t_0) \int_{G^{\times n}} \phi(t_0t_1, t_2, \ldots,t_n) f_1(t_0t_1) f_2(t_2) \ldots f_n(t_n)  \Delta^{a_1}(t_0) \Delta^{b_1 + a_2}(t_1^{-1})  \times  \\
    & \qquad \Delta^{b_2 + a_3}(t_2^{-1} t_1^{-1} ) \ldots \Delta^{b_{n-1} + a_n}(t_{n-1}^{-1} \dots t_1^{-1}) (\Delta^{b_n}1_F g)(t_n^{-1} \dots t_1^{-1})  dt_1 \ldots dt_n \\
    &= 1_F(t_0) \int_{G^{\times n}} \phi(t_0 t_1^{-1}, t_2,\ldots,t_n) f_1(t_0 t_1^{-1}) f_2(t_2)\ldots f_n(t_n)  \Delta^{a_1}(t_0) \Delta^{b_1 + a_2}(t_1) \times  \\
    & \qquad \Delta^{b_2 + a_3}(t_2^{-1} t_1) \ldots \Delta^{b_{n-1} + a_n}(t_{n-1}^{-1} \dots t_2^{-1} t_1) (\Delta^{b_n}1_F g)(t_n^{-1} \dots t_2^{-1} t_1) \Delta(t_1^{-1}) dt_1 \ldots dt_n \\
    &= ... \\
    &= 1_F(t_0) \int_{G^{\times n}} \phi(t_0 t_1^{-1},\ldots,t_{n-1} t_n^{-1}) f_1(t_0 t_1^{-1})\ldots f_n(t_{n-1} t_n^{-1}) \Delta^{a_1}(t_0) \Delta^{b_1 + a_2}(t_1) \times \ldots  \\
    & \qquad \Delta^{b_{n-1} + a_n}(t_{n-1}) (\Delta^{b_n}1_F g)(t_n) \Delta((t_1 \ldots t_n)^{-1}) dt_1 \ldots dt_n. \\
\end{split}
\]
It follows that the kernel has the required form.
\end{proof}

\section{Fourier to Schur transference} \label{Sect=FouriertoSchur}

\noindent Let $G$ be a locally compact first countable group. In this section we prove the transference from Fourier to Schur multipliers for such groups. An important ingredient will be the following `split' coordinate-wise convolution: fix functions $\varphi_k \in C_c(G) \star C_c(G) \subseteq A(G)$ such that $\|\varphi_k\|_1 = 1$ and the supports of $\vphi_k$ form a decreasing neighbourhood basis of $\{e\}$. In other words, $(\vphi_k)$ is an approximate unit for the Banach $*$-algebra $L_1(G)$. Note that we use the first countability of $G$ here. Now, given a bounded function $\phi: G^{\times n} \to \C$ and some fixed $1 \leq i \leq n$, we define 
\[
    \phi_{t_1, \dots, t_n}(s_1, \dots, s_n) = \phi(t_1^{-1} s_1 t_2, \ldots, t_{i-1}^{-1} s_{i-1} t_i, t_i^{-1} s_i, s_{i+1} t_{i+1}^{-1}, t_{i+1} s_{i+2} t_{i+2}^{-1}, \dots, t_{n-1} s_n t_n^{-1})
\]
and
\begin{equation} \label{Eqn=PointwiseConvolution}
\begin{split}
    &\phi_k(s_1, \dots, s_n) := \int_{G^{\times n}} \phi_{t_1, \dots, t_n}(s_1, \dots, s_n) \left( \prod_{j=1}^n \varphi_k(t_j)\right) dt_1 \ldots dt_n \\
    &= \int_{G^{\times n}} \phi_{t_1, \dots, t_n}(e, \dots, e) \left( \prod_{j=1}^i \varphi_k(s_j\ldots s_i t_j) \right) \left( \prod_{j=i+1}^n \varphi_k(t_js_{i+1} \ldots s_j) \Delta(s_{i+1} \ldots s_j) \right) dt_1 \ldots dt_n.
\end{split}
\end{equation}
The last term of \eqref{Eqn=PointwiseConvolution} in combination with Lemma \ref{Lem=Lemma4.4b} will allow us to reduce the problem to the linear case. The necessity of the `split' between indices $i$ and $i+1$ will be explained later. 

\begin{theorem} \label{Thm=FouriertoSchur}
Let $G$ be a locally compact first countable group and let $1 \leq p \leq \infty$, $1<p_1, \ldots, p_{n-1} \leq \infty$ be such that $p^{-1} =  \sum_{i=1}^n p_i^{-1}$.
Let $\phi \in C_b(G^{\times n})$ and define $\widetilde{\phi} \in C_b(G^{\times n + 1})$ by
\begin{equation*} 
    \widetilde{\phi}(s_0, \ldots, s_n) = \phi(s_0 s_1^{-1}, s_1 s_2^{-1}, \ldots, s_{n-1} s_n^{-1}), \qquad s_i \in G.
\end{equation*}
If $\phi$ is defines a $(p_1, \ldots, p_n)$-mb Fourier multiplier $T_\phi$ of $G$, then $\widetilde{\phi}$ defines a $(p_1, \ldots, p_n)$-mb Schur multiplier $M_{\widetilde{\phi}}$ of $G$. Moreover,
\[
\begin{split}
& \Vert M_{\widetilde{\phi}}: S_{p_1}(L_2(G)) \times \ldots \times S_{p_n}(L_2(G)) \rightarrow S_{p}(L_2(G)) \Vert_{(p_1,\ldots,p_n)-mb}\\
& \qquad  \leq
\Vert T_{\phi}: L_{p_1}(\mathcal{L}G  ) \times \ldots \times  L_{p_n}(\mathcal{L}G ) \rightarrow L_{p}(\mathcal{L}G ) \Vert_{(p_1,\ldots,p_n)-mb}.
\end{split}
\]
\end{theorem}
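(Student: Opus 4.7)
The plan is to adapt the proof of \cite[Theorem 3.1]{CKV}, which itself follows \cite[Lemma 4.6]{CJKM}, with the main technical novelty being careful bookkeeping of modular function factors that arise in the non-unimodular setting. I would first reduce to finite sets via Theorem \ref{Thm=Finite truncation} applied to the continuous symbol $\tilde\phi$: it suffices to fix a finite subset $F = \{g_0,\dots,g_{N-1}\} \subseteq G$, an integer $m \geq 1$, and test matrices $A_j \in S_{p_j}^m \otimes S_{p_j}(\ell_2(F))$, and to establish
\[
\|M_{\tilde\phi}^{(m)}(A_1,\dots,A_n)\|_{S_p^m \otimes S_p(\ell_2(F))} \le \|T_\phi\|_{(p_1,\dots,p_n)-mb}\prod_{j=1}^n \|A_j\|_{S_{p_j}^m \otimes S_{p_j}(\ell_2(F))}.
\]

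The core idea is to realise $M_{\tilde\phi}^{(m)}(A_1,\dots,A_n)$ as a limit of corners $P_F \cdot T_\phi^{(m)}(X_{1,k},\dots,X_{n,k}) \cdot P_F$ of the Fourier multiplier applied to suitably chosen matrix inputs $X_{j,k} \in M_m(\kappa_{p_j}^0(L))$. Using the approximate identity $(\varphi_k)$ fixed before the theorem, I would take each entry of $X_{j,k}$ to be of the form $\Delta^{a_j} \lambda(c_{j,k}^{g,h}) \Delta^{b_j}$, where $c_{j,k}^{g,h}$ is a translate of $\varphi_k$ supported near $gh^{-1}$, scaled by a $\Delta$-factor in $g$ and $h$ chosen so that by \eqref{Eqn=Kernel of single operator} the kernel of $P_F X_{j,k} P_F$ converges in $S_{p_j}$ to the rank-one kernel at $(g,h)$ weighted by $(A_j)_{g,h}$. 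Here Proposition \ref{Prop=Blackbox}, the non-unimodular intertwining result, provides the norm control $\|X_{j,k}\|_{S_{p_j}^m \otimes L_{p_j}(\mathcal{L}G)} \le \|A_j\|_{S_{p_j}^m \otimes S_{p_j}(\ell_2(F))} + o_k(1)$.

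With these approximants in hand, I would apply Lemma \ref{Lem=Fourier kernel} entrywise to compute the kernel of $P_F \cdot T_\phi^{(m)}(X_{1,k},\dots,X_{n,k}) \cdot P_F$ at a pair $(g_{i_0}, g_{i_n})$, collect all modular function factors, and identify the result, after change of variables, as the convolution $\phi_k$ of \eqref{Eqn=PointwiseConvolution} evaluated at $(g_{i_0} g_{i_1}^{-1}, \dots, g_{i_{n-1}} g_{i_n}^{-1})$, weighted by $\prod_j (A_j)_{g_{i_{j-1}}, g_{i_j}}$. The necessity of the split between indices $i$ and $i+1$ in \eqref{Eqn=PointwiseConvolution} emerges here: it reflects the asymmetry of \eqref{Eqn=Kernel of single operator} between $\Delta^{a}(s)$ on the left and $\Delta^{b-1}(t)$ on the right, which forces the $\Delta$-weights from successive inputs to collect on opposite sides of a pivot index. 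Lemma \ref{Lem=Lemma 4.3} handles the translation parameters introduced by the averaging against the $\varphi_k$'s, and Lemma \ref{Lem=Lemma4.4b} is what absorbs the resulting $\Delta$-weighted linear Fourier multipliers on the pivot side, so that the final bound reduces to $\|T_\phi\|_{(p_1,\dots,p_n)-mb}$ alone.

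Finally, as $k \to \infty$, $\varphi_k$ concentrates at $e$, so by continuity of $\phi$ and $\Delta$ together with dominated convergence the kernel above converges to $M_{\tilde\phi}^{(m)}(A_1,\dots,A_n)$ in $S_p^m \otimes S_p(\ell_2(F))$. Combined with the $(p_1,\dots,p_n)$-mb bound for $T_\phi^{(m)}(X_{1,k},\dots,X_{n,k})$ and the contractivity of the compression $P_F(\cdot)P_F : L_p(\mathcal{L}G) \to S_p(L_2(G))$ (amplified by $m$), this yields the desired inequality. The main obstacle throughout is the simultaneous bookkeeping of modular factors across the spatial embeddings $\kappa_{p_j}^\theta$, the single-operator kernel formula \eqref{Eqn=Kernel of single operator}, and the multilinear kernel of Lemma \ref{Lem=Fourier kernel}, in such a way that the product of per-input kernels precisely assembles into $\phi_k$. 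The restriction $p_j > 1$ is inherited from Proposition \ref{Prop=Blackbox}, while the case $p_j = \infty$ requires working in $C_\lambda^*(G)$ in the $j$-th slot, which is compatible since the approximants $X_{j,k}$ lie in $\lambda(C_c(G) \star C_c(G)) \subseteq C_\lambda^*(G)$.
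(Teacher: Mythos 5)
Your reduction to finite $F$ via Theorem \ref{Thm=Finite truncation} and your identification of the roles of $\varphi_k$, $\phi_k$, Lemma \ref{Lem=Lemma 4.3}, Lemma \ref{Lem=Lemma4.4b} and Proposition \ref{Prop=Blackbox} match the paper, but the central mechanism you propose — realising $M_{\widetilde\phi}^{(m)}(A_1,\dots,A_n)$ as a limit of compressions $P_F\, T_\phi^{(m)}(X_{1,k},\dots,X_{n,k})\, P_F$ with entries $\Delta^{a_j}\lambda(c_{j,k}^{g,h})\Delta^{b_j}$ built from translates of $\varphi_k$ — does not work, for two reasons. First, after the reduction of Theorem \ref{Thm=Finite truncation} the set $F$ is \emph{finite}, so for non-discrete $G$ it has Haar measure zero and $P_F=0$; the compression trick $P_F(\cdot)P_F$ from \cite{CaspersDeLaSalle} requires $F$ to be a relatively compact set of positive measure, is contractive only after normalising by $\mu(F)^{-1/p}$, and becomes approximately compatible with the duality pairing only along a F\o lner net — it is the engine of the \emph{converse} (Schur-to-Fourier) direction and needs amenability, which Theorem \ref{Thm=FouriertoSchur} does not assume. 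Second, the norm control you assert, $\|X_{j,k}\|_{S_{p_j}^m\otimes L_{p_j}(\mathcal L G)}\le\|A_j\|+o_k(1)$, has no justification: Proposition \ref{Prop=Blackbox} is a statement about $\lim_V\|T_\psi(h_V^{2/q})-\psi(1)h_V^{2/q}\|_q=0$ and says nothing about comparing Schatten norms of matrices with $L_p(\mathcal L G)$-norms of matrices of convolution operators. Indeed for a single matrix unit $A_j=E_{g,h}$ the quantity $\|\Delta^{a_j}\lambda(c_{j,k}^{g,h})\Delta^{b_j}\|_{L_{p_j}(\mathcal L G)}$ blows up as $c_{j,k}^{g,h}$ concentrates (e.g.\ for $p_j=2$ it is an $L_2$-norm of an $L_1$-normalised bump), so no choice of scaling makes both the norm bound and the kernel convergence hold simultaneously.

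The idea you are missing is the paper's replacement for the unit of $L_p(\mathcal L G)$: one fixes a second approximation parameter $V$, forms the self-adjoint $k_V=\|1_V\Delta^{-1/4}\|_2^{-1}\lambda(1_V\Delta^{-1/4})\Delta^{1/2}$ with polar part $h_V$, and feeds the Fourier multiplier the inputs $U(a_j\otimes h_V^{2/p_j})U^*$ where $U=\sum_{s\in F}p_s\otimes\lambda_s$. Unitary invariance together with \eqref{Eqn=LpCrossProduct} and \eqref{Eqn=PlancherelIdentity} gives the \emph{exact} equality $\|U(a_j\otimes h_V^{2/p_j})U^*\|_{S_{p_j}^N\otimes L_{p_j}(\mathcal L G)}=\|a_j\|_{S_{p_j}^N}$, which is the norm control your proposal lacks, and the target is identified as $U(M_{\widetilde{\phi_k}}(a_1,\dots,a_n)\otimes h_V^{2/p})U^*$ rather than as a compression. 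The error term then reduces, via Lemma \ref{Lem=Lemma 4.3} on matrix units, to $\|T_{\phi_k(r_0\,\cdot\,r_1^{-1},\dots)}(h_V^{2/p_1},\dots,h_V^{2/p_n})-\phi_k(r_0r_1^{-1},\dots)h_V^{2/p}\|_p$, which is where Lemmas \ref{Lem=Lemma4.4}, \ref{Lem=Lemma4.4b} and Proposition \ref{Prop=Blackbox} enter (this is Lemma \ref{Lem=Lemma4.6}). Note also that the split at index $i$ in \eqref{Eqn=PointwiseConvolution} is forced not by an asymmetry in the kernel formula \eqref{Eqn=Kernel of single operator}, as you suggest, but by the requirement $\bar p_1,\bar p_2>1$ needed to apply Proposition \ref{Prop=Blackbox} on both sides of the pivot.
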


\begin{proof}
Let $F \subseteq G$ finite with $|F| = N$. By Theorem \ref{Thm=Finite truncation}, it suffices to show that the norm of 
\[
M_{\widetilde{\phi}}: S_{p_1}(\ell_2(F)) \times \ldots S_{p_n}(\ell_2(F)) \to S_p(\ell_2(F))
\]
and its matrix amplifications are bounded. \\

For $s \in F$, let $p_s \in B(\ell_2(F))$ be the orthogonal projection onto the span of $\delta_s$. Define the unitary $U = \sum_{s \in F} p_s \otimes \l_s \in B(\ell_2(F)) \otimes \cL G$. In the case $p = \infty$, the Fourier to Schur transference is proven through the transference identity
\[
    T_\phi^{(N)}(U(a_1 \otimes 1)U^*, \ldots, U(a_n \otimes 1)U^*) = U (M_{\widetilde{\phi}}(a_1, \dots, a_n) \otimes 1)U^*, \qquad a_i \in B(\ell_2(F)).
\]
The idea is to do something similar in the case $p < \infty$. However, the unit does not embed in $L_p(\cL G)$, so we need to use some approximation of the unit instead. We construct this as follows: let $\cV = (V_i)_{i \in \N}$ be a decreasing symmetric neighbourhood basis of the identity (this is possible because $G$ is first countable). For $V \in \cV$ we define the operator
\[
    k_V = \|1_V \Delta^{-1/4}\|_2^{-1} \l(1_V \Delta^{-1/4}) \Delta^{1/2} \in L_2(\cL G)
\]
which is proven to be self-adjoint in \cite[Section 8.3]{CPPR}. Let $k_V = u_V h_V$ be its polar decomposition. Then we have $h_V^{2/p} \in L_p(\cL G)$ and, by \eqref{Eqn=PlancherelIdentity}, $\|h_V^{2/p}\|_p = 1$. Now for any $V \in \cV$ we have, by \eqref{Eqn=LpCrossProduct},
\begin{equation} \label{Eqn=TensorWithhV}
    \|M_{\widetilde{\phi}}(a_1, \dots, a_n)\|_{S_p^N} = \|M_{\widetilde{\phi}}(a_1, \dots, a_n) \otimes h_V^{\frac2p}\|_{S_p^N \otimes L_p(\cL G)}, \qquad a_i \in B(\ell_2(F)).
\end{equation}

Next, fix an $i \in \{1, \dots, n\}$ such that $\bar{p}_1 := \left(\sum_{l=1}^i p_l^{-1}\right)^{-1} > 1$ and $\bar{p}_2 := \left(\sum_{l=i+1}^n p_l^{-1}\right)^{-1} > 1$. This is possible by our assumption that $p_1, \dots, p_n > 1$. We now define the functions $\phi_{t_1, \dots, t_n}$ and $\phi_k$ as in \eqref{Eqn=PointwiseConvolution} for the chosen $i$. \\

The condition $\bar{p}_1, \bar{p}_2 > 1$ is necessary for the use of Proposition \ref{Prop=Blackbox} at the end of the proof of Lemma \ref{Lem=Lemma4.6}; this also explains why we need the `split' in the pointwise convolutions. If $p>1$, then one can take $i=n$ in which case the proof of Lemma \ref{Lem=Lemma4.6} simplifies somewhat. Note that in \cite{CJKM} and \cite{CKV}, the convolutions were defined for $i = n-1$. In the latter paper, this creates a problem in case $p_n = \infty, p = 1$; this problem is resolved by splitting instead at some $i$ chosen as above.\\

Let $a_1, \ldots, a_n \in B(\ell_2(F))$. By continuity of $\phi$, we have that $\phi_k \to \phi$ pointwise. Indeed, for $\epsilon > 0$, we can take $K$ such that for $t_1, \dots, t_n \in \supp \vphi_K$, $|\phi_{t_1, \dots, t_n}(s_1, \dots, s_n) - \phi(s_1, \dots, s_n)| < \epsilon$. Then for $k > K$, we get $|\phi_k(s_1, \dots, s_n) - \phi(s_1, \dots, s_n)| < \epsilon$. Since we are working in finite dimensions, this implies
\[
    M_{\widetilde{\phi_k}}(a_1, \dots, a_n) \to M_{\widetilde{\phi}}(a_1, \dots, a_n)
\]
in $S^N_p$. Together with \eqref{Eqn=TensorWithhV} we find
\[
\begin{split}
    \|M_{\widetilde{\phi}}(a_1, \dots, a_n)\|_{S_p^N} &= \lim_k \limsup_{V \in \cV}  \|M_{\widetilde{\phi_k}}(a_1, \dots, a_n) \otimes h_V^{\frac2p}\|_{S_p^N \otimes L_p(\cL G)} \\
    &= \lim_k \limsup_{V \in \cV}  \|U(M_{\widetilde{\phi_k}}(a_1, \dots, a_n) \otimes h_V^{\frac2p})U^*\|_{S_p^N \otimes L_p(\cL G)} \\
    &\leq \limsup_k \limsup_{V \in \cV}  \|T_{\phi_k}^{(N)}(U(a_1 \otimes h_V^{\frac2{p_1}})U^*, \ldots, U(a_n \otimes h_V^{\frac2{p_n}})U^*)\|_{S_p^N \otimes L_p(\cL G)} \\
    &\quad + \limsup_k \limsup_{V \in \cV}  \|T_{\phi_k}^{(N)}(U(a_1 \otimes h_V^{\frac2{p_1}})U^*, \ldots, U(a_n \otimes h_V^{\frac2{p_n}})U^*) \\
    &\qquad \qquad \qquad \qquad \qquad - U(M_{\widetilde{\phi_k}}(a_1, \dots, a_n) \otimes h_V^{\frac2p})U^*\|_{S_p^N \otimes L_p(\cL G)} \\
    &:= A + B.
\end{split}
\]
First, we have
\[
\begin{split}
    A &\leq \limsup_k \limsup_{V \in \cV} \|T_{\phi_k}^{(N)} \| \|a_1 \otimes h_V^{\frac2{p_1}}\|_{S_{p_1}^N \otimes L_{p_1}(\cL G)} \ldots \|a_n \otimes h_V^{\frac2{p_n}}\|_{S_{p_n}^N \otimes L_{p_n}(\cL G)} \\
    &= \limsup_k \|T_{\phi_k}^{(N)}\| \|a_1\|_{S_{p_1}^N} \ldots \|a_n\|_{S_{p_n}^N}.
\end{split}
\]
By repeated use of Lemma \ref{Lem=Lemma 4.3} (in particular we can use Fubini because of the strong continuity property) we find
\[
    \|T_{\phi_k}^{(N)}\| \leq \int_{G^{\times n}} \|T_\phi^{(N)}\| \left( \prod_{i=1}^n |\varphi_k(t_j)| \right) dt_1 \ldots dt_n = \|T_\phi^{(N)}\| \|\varphi_k\|_1^n = \|T_\phi^{(N)}\| \leq \|T_\phi\|_{(p_1, \dots, p_n)-mb}.
\]
and hence
\[
    A \leq \|T_{\phi}\|_{(p_1, \dots, p_n)-mb} \|a_1\|_{S_{p_1}^N} \ldots \|a_n\|_{S_{p_n}^N}.
\]
It remains to show that $B = 0$. By the triangle inequality, it suffices to show this for $a_i = E_{r_{i-1}, r_i}$, $r_0, \dots, r_n \in F$ (for other combinations of matrix units, the term below becomes 0). In that case we get, by applying Lemma \ref{Lem=Lemma 4.3} in the second equality:
\[
\begin{split}
    & T_{\phi_k}^{(N)}(U(E_{r_0, r_1} \otimes h_V^{\frac2{p_1}})U^*, \ldots, U(E_{r_{n-1},r_n} \otimes h_V^{\frac2{p_n}})U^*) - U(M_{\widetilde{\phi_k}}(E_{r_0, r_1}, \dots, E_{r_{n-1},r_n}) \otimes h_V^{\frac2p})U^*\\
    &= E_{r_0, r_n} \otimes \left( T_{\phi_k}(\l_{r_0}h_V^{\frac2{p_1}} \l_{r_1}^*, \ldots, \l_{r_{n-1}}h_V^{\frac2{p_n}} \l_{r_n}^*) - \phi_k(r_0r_1^{-1}, \ldots, r_{n-1}r_n^{-1}) \l_{r_0} h_V^{\frac2p}\l_{r_n}^* \right) \\
    &= E_{r_0, r_n} \otimes \l_{r_0} \left( T_{\phi_k(r_0 \: \cdot\: r_1^{-1}, \ldots, r_{n-1} \: \cdot\: r_n^{-1})}(h_V^{\frac2{p_1}}, \ldots, h_V^{\frac2{p_n}}) - \phi_k(r_0r_1^{-1}, \ldots, r_{n-1}r_n^{-1})  h_V^{\frac2p} \right) \l_{r_n}^*.
\end{split}
\]
Hence,
\begin{equation} \label{Eqn=FinalFormB}
    B = \limsup_k \limsup_{V \in \cV} \left\| T_{\phi_k(r_0\: \cdot\: r_1^{-1}, \ldots, r_{n-1} \: \cdot\: r_n^{-1})}(h_V^{\frac2{p_1}}, \ldots, h_V^{\frac2{p_n}}) - \phi_k(r_0r_1^{-1}, \ldots, r_{n-1}r_n^{-1})  h_V^{\frac2p}  \right\|_{L_p(\cL G)}.
\end{equation}
The limit over $k$ exists and is 0; we postpone the proof to Lemma \ref{Lem=Lemma4.6} below.\\

For the multiplicatively bounded estimate, we prove using similar methods that, for $K \geq 1$ and $a_1, \dots, a_n \in M_K(B(\ell_2(F)))$,
\[
    \|M_{\widetilde{\phi}}^{(K)}(a_1, \dots, a_n)\|_{S_p^{KN}} \leq \|T_\phi^{(KN)}\| \|a_1\|_{S_p^{KN}} \ldots \|a_n\|_{S_p^{KN}}.
\]
Here we use $1_{M_K} \otimes U$ in place of $U$. Moreover, by the triangle inequality it suffices to prove the estimate for $B$ for $a_i = E_{j_{i-1},j_i} \otimes E_{r_{i-1},r_i}$, with $1 \leq j_i \leq K$ and $r_i \in F$; the expression for $B$ then reduces to \eqref{Eqn=FinalFormB} again.
\end{proof}

The following Lemma is similar to \cite[Lemma 4.6]{CJKM}. In our case we have $x_j = 1$ which allows us to avoid the SAIN condition used in that paper; on the other hand, we work with translated functions and our result works for non-unimodular groups. Already in \cite[Theorem 3.1]{CKV} it was explained how to adapt the proof of \cite[Lemma 4.6]{CJKM} for the translated functions. However this paper only considered unimodular groups. Here we spell out the full proof for convenience of the reader.

\begin{lemma} \label{Lem=Lemma4.6}
In the proof of Theorem \ref{Thm=FouriertoSchur}, we have that
\[
\lim_k \limsup_{V \in \cV} \left\| T_{\phi_k(r_0 \: \cdot\: r_1^{-1}, \ldots, r_{n-1} \: \cdot\: r_n^{-1})}(h_V^{\frac2{p_1}}, \ldots, h_V^{\frac2{p_n}}) - \phi_k(r_0r_1^{-1}, \ldots, r_{n-1}r_n^{-1})  h_V^{\frac2p}  \right\|_{L_p(\cL G)} = 0.
\]
\end{lemma}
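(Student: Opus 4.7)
The plan is to fix $k$ (the outer $\lim_k$ is then automatic) and, for every fixed choice of $r_0, \dots, r_n$, prove that $\limsup_{V \in \cV}$ of the displayed norm is zero. Write $\bar\phi_k(s_1, \dots, s_n) := \phi_k(r_0 s_1 r_1^{-1}, \dots, r_{n-1} s_n r_n^{-1})$. Applying the second formula of \eqref{Eqn=PointwiseConvolution} and a routine change of variables absorbing the $r_l$'s, one rewrites $\bar\phi_k$ as a single integral
\[
    \bar\phi_k(s_1, \dots, s_n) = \int_{G^{\times n}} c(t) \, \Psi_t^{(1)}(s_1, \dots, s_i) \, \Psi_t^{(2)}(s_{i+1}, \dots, s_n) \, dt,
\]
with $c$ bounded and
\[
    \Psi_t^{(1)}(s_1, \dots, s_i) = \prod_{j=1}^{i} \psi_t^{(j)}(s_j \cdots s_i), \qquad \Psi_t^{(2)}(s_{i+1}, \dots, s_n) = \prod_{j=i+1}^{n} \psi_t^{(j)}(s_{i+1} \cdots s_j) \, \Delta(s_{i+1} \cdots s_j),
\]
where each $\psi_t^{(j)}$ is a two-sided translate of $\varphi_k$, hence lies in $C_c(G) \star C_c(G) \subset A(G)$, and the integrand is compactly supported in $t$ once an argument of the Fourier multiplier is substituted.

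Using the joint strong continuity in Lemma \ref{Lem=Lemma 4.3} together with the above support properties, I would interchange the integral and the Fourier multiplier, obtaining
\[
    T_{\bar\phi_k}(h_V^{2/p_1}, \dots, h_V^{2/p_n}) = \int c(t) \, T_{\Psi_t^{(1)} \cdot \Psi_t^{(2)}}(h_V^{2/p_1}, \dots, h_V^{2/p_n}) \, dt.
\]
By the product formula \eqref{Eqn=FourierProducts} the inner multiplier factors into the product of an $i$-variable multiplier on $(h_V^{2/p_1}, \dots, h_V^{2/p_i})$ and an $(n-i)$-variable multiplier on $(h_V^{2/p_{i+1}}, \dots, h_V^{2/p_n})$. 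The first factor has exactly the form treated by Lemma \ref{Lem=Lemma4.4b}, producing the iterated expression $T_{\psi_t^{(1)}}(h_V^{2/p_1} \, T_{\psi_t^{(2)}}(h_V^{2/p_2} \cdots T_{\psi_t^{(i)}}(h_V^{2/p_i}) \cdots))$. For the second factor, the $\Delta(s_{i+1} \cdots s_j)$-terms baked into $\Psi_t^{(2)}$ are precisely what is needed to apply the mirror-image analogue of Lemma \ref{Lem=Lemma4.4b}, peeling off linear multipliers from the right and producing a symmetric iterated expression ending in right-multiplication by $h_V^{2/p_n}$.

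Finally, I would invoke Proposition \ref{Prop=Blackbox} at each inner step: since each $\psi_t^{(j)} \in A(G)$, for every relevant exponent $q$ one has $\|T_{\psi_t^{(j)}}(h_V^{2/q}) - \psi_t^{(j)}(e)\, h_V^{2/q}\|_{L_q(\cL G)} \to 0$ as $V$ shrinks, uniformly in $t$ on the support of $c$. The hypothesis $\bar{p}_1, \bar{p}_2 > 1$ is used here to guarantee that every intermediate exponent appearing in either recursion is strictly larger than $1$, which is the range in which Proposition \ref{Prop=Blackbox} applies. Iterating outwards using H\"older's inequality and $\|h_V^{2/q}\|_{L_q} = 1$, the integrand converges in $L_p(\cL G)$-norm to $\Psi_t^{(1)}(e, \dots, e) \, \Psi_t^{(2)}(e, \dots, e) \, h_V^{2/p}$, uniformly in $t$. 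Dominated convergence in the $t$-integral gives $\limsup_V \| T_{\bar\phi_k}(h_V^{2/p_1}, \dots, h_V^{2/p_n}) - \bar\phi_k(e, \dots, e) \, h_V^{2/p} \|_{L_p} = 0$, and $\bar\phi_k(e, \dots, e) = \phi_k(r_0 r_1^{-1}, \dots, r_{n-1} r_n^{-1})$ is the desired scalar.

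The main obstacle is establishing the mirror-image analogue of Lemma \ref{Lem=Lemma4.4b} for $\Psi_t^{(2)}$. The given Lemma \ref{Lem=Lemma4.4b} peels off multipliers on the left, where the embedding $\Delta^{1/p_j} \l(f_j)$ places the modular factor on the outside so that it composes neatly. Peeling instead from the right requires the embedding $\l(g_j) \Delta^{1/p_j}$, which by \eqref{Eqn=CommutationFormula2} introduces precisely the $\Delta$-cocycle that appears as the $\Delta(s_{i+1} \cdots s_j)$-factors in $\Psi_t^{(2)}$; indeed this is \emph{why} those factors are present in \eqref{Eqn=PointwiseConvolution}. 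Verifying that this bookkeeping works out and running the symmetric induction is the core technical step; the rest of the argument is a careful but essentially mechanical application of Lemmas \ref{Lem=Lemma 4.3}, \ref{Lem=Lemma4.4b} and Proposition \ref{Prop=Blackbox}.
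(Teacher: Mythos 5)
Your plan reproduces the skeleton of the paper's treatment of one of the three terms, but it founders on the very first reduction. You claim that for each fixed $k$ the $\limsup_V$ already vanishes, so that ``the outer $\lim_k$ is automatic'', and you justify the interchange of the $t$-integral with the multiplier by asserting that ``the integrand is compactly supported in $t$ once an argument of the Fourier multiplier is substituted.'' This is false: the arguments are $h_V^{2/p_j}$, which are fractional powers of $|k_V|$ and are \emph{not} of the form $\Delta^a\lambda(f)\Delta^b$ with $f$ compactly supported, so substituting them does not localize $t$. What one actually has is $\|T_{\Psi_t^{(1)}\Psi_t^{(2)}}(h_V^{2/p_1},\dots,h_V^{2/p_n})\|_p \leq \Delta((t_{i+1}\cdots t_n)^{-1})$ (cf.\ \eqref{Eqn=TpsiExpression}), which neither vanishes off a compact set nor is integrable, while your coefficient $c(t)=\phi_t(e,\dots,e)$ is merely bounded ($\phi\in C_b$, not $C_c$). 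Hence the dominating function for your $t$-integral simply does not exist, and the dominated convergence step --- which is the crux --- fails. The paper's fix is to multiply $\phi$ by translates of a positive-definite cutoff $\zeta\in C_c(G)\cap A(G)$ with $\zeta(e)=1$, splitting the quantity into $A_{k,V}+B_{k,V}+C_{k,V}$: the term $B_{k,V}$ (built from the compactly supported $\phi(\zeta_1,\dots,\zeta_n)$) is the one your argument handles and indeed satisfies $\lim_V B_{k,V}=0$ for each fixed $k$, but the two error terms $A_{k,V}$ and $C_{k,V}$ introduced by the cutoff vanish only as $k\to\infty$ (via $\phi_k\to\phi$ pointwise and another dominated-convergence/Proposition \ref{Prop=Blackbox} argument against $\prod_j|\varphi_k(t_j)|$). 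This is precisely why the lemma is stated with $\lim_k\limsup_V$ and why your fixed-$k$ reduction is not available.

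Two smaller points. First, Proposition \ref{Prop=Blackbox} gives convergence for each fixed symbol, not uniformly in $t$; the paper never needs uniformity because it always pairs pointwise convergence with an integrable dominating function (again supplied by the cutoff). Second, your ``main obstacle'' --- a mirror-image analogue of Lemma \ref{Lem=Lemma4.4b} --- is not how the paper proceeds: the modular factors $\Delta(s_{i+1}\cdots s_j)$ in \eqref{Eqn=PointwiseConvolution} are absorbed into the symbols of the linear multipliers, yielding operators $T_j=T_{\varphi_k(t_jr_i\,\cdot\,r_j^{-1})\Delta(r_i\,\cdot\,r_j^{-1})}$ which are bounded with $\|T_j\|\leq\Delta(t_j^{-1})$ but not contractive; this $t$-dependent growth is yet another place where integrability of the cutoff symbol is needed. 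Your instinct that the $\Delta$-factors exist to make a right-handed peeling work is a reasonable guess, but the bookkeeping you defer is exactly where the contractivity you rely on breaks down.
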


The main idea is to reduce the problem to the linear case using \eqref{Eqn=PointwiseConvolution} and apply the following result for linear Fourier multipliers:

\begin{proposition} \label{Prop=Blackbox}
    Let $\cV$ be a symmetric neighbourhood basis of the identity of $G$. Let $2 \leq q < p \leq \infty$ or $1 \leq p < q \leq 2$. Assume $\psi \in C_b(G)$ defines a Fourier multiplier on $L_p(\cL G)$. Then we have 
    \[
        \lim_{V \in \cV} \|T_{\psi}(h_V^{2/q}) - \psi(1) h_V^{2/q}\|_{L_q(\cL G)} \to 0.
    \]
\end{proposition}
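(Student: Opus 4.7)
My proof plan is to combine Haagerup reduction with the analogous intertwining result for unimodular groups, \cite[Proposition 3.9]{CJKM}, and then to interpolate between the cases $q = 2$ and $q = p$.

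Step 1 is a reduction to the semifinite setting. Haagerup reduction embeds $\cL G$ into a larger von Neumann algebra $\widetilde{\cM}$ which is an increasing union of semifinite subalgebras $\cM_n \subset \widetilde{\cM}$, each equipped with a normal faithful trace and a normal conditional expectation $E_n$ compatible with the weights. One lifts the Fourier multiplier $T_\psi$ and the elements $h_V^{2/q}$ to $\widetilde{\cM}$, applies the unimodular intertwining result inside $\cM_n$ (where the modular function is tractable), and recovers the statement inside $\cL G$ by taking $n \to \infty$ using the $L_q$-compatibility of the $E_n$.

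Step 2 handles the two endpoints of the interpolation. For $q = 2$, one has $k_V \in L_2(\cL G)$ with $\|k_V\|_2 = 1$ by the Plancherel identity \eqref{Eqn=PlancherelIdentity}, and $T_\psi$ acts on $k_V$ by replacing $1_V \Delta^{-1/4}$ by $\psi \cdot 1_V \Delta^{-1/4}$; continuity of $\psi$ at $e$ together with the shrinking support of $1_V$ then gives $\|T_\psi(k_V) - \psi(e) k_V\|_2 \to 0$, which one transfers from $k_V$ to $h_V = |k_V|$ using the polar decomposition $k_V = u_V h_V$ and the uniform boundedness of $u_V$. For $q = p$, boundedness of $T_\psi$ on $L_p(\cL G)$ and the normalisation $\|h_V^{2/p}\|_p = 1$ give only the uniform bound $\|T_\psi(h_V^{2/p}) - \psi(e) h_V^{2/p}\|_p \leq \|T_\psi\|_{L_p \to L_p} + |\psi(e)|$, but this is exactly what is needed at the second endpoint.

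Step 3 interpolates by applying Hadamard's three-lines lemma to the analytic operator-valued family $F_V(z) = T_\psi(h_V^z) - \psi(e) h_V^z$, with boundary lines chosen so that the norm at one boundary is the $L_2$-norm and the norm at the other is the $L_p$-norm; convergence to $0$ at the first boundary and a uniform bound at the second yield convergence to $0$ in $L_q$ for every $q$ strictly between $2$ and $p$, which covers both cases of the hypothesis. The main obstacle is Step 1: one must verify that Haagerup reduction intertwines the Fourier-multiplier structure with its semifinite lift in a way that is compatible with the non-trivial modular behaviour appearing in the very definition of $h_V$ (through $1_V \Delta^{-1/4}$), and that the transfer back to $\cL G$ via the conditional expectations $E_n$ preserves the $L_q$-convergence. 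Once this bookkeeping is set up, the analytic content of the argument is essentially driven by continuity of $\psi$ at the identity.
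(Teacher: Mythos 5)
Your overall shape (establish an $L_2$ statement at the identity, then upgrade to $L_q$ using only $L_p$-boundedness, with Haagerup reduction handling the non-tracial weight) matches the paper's, but the two steps carrying the analytic weight have genuine gaps. The most serious is Step 3. A three-lines argument for $F_V(z) = T_\psi(h_V^z) - \psi(e)h_V^z$ requires smallness of $\|F_V(z)\|$ along the \emph{entire} boundary line $\mathrm{Re}\, z = 1$, i.e.\ $\|T_\psi(h_V^{1+it}) - \psi(e)h_V^{1+it}\|_2 \to 0$ uniformly in $t \in \R$. Since $T_\psi$ does not commute with right multiplication by the unitaries $h_V^{it}$, this does not follow from the $t=0$ case, and it is exactly this failure that forces one to replace naive interpolation by the Mazur-map H\"older estimate of \cite[Lemma 3.1]{CPR18}: for a $\vphi$-preserving ucp map $T$ commuting with the modular group one has $\|T^{(q)}(M_{2,q}(x)) - M_{2,q}(x)\|_q \le C\|T^{(2)}(x)-x\|_2^\theta\|x\|_2^{1-\theta}$ (Lemma \ref{Lem=AlmostMultMaps}). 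Applied to $x = h_V$ this is precisely the quantitative ``convergence at $2$ plus boundedness elsewhere gives convergence at $q$'' statement you are after. It is here, and not in transporting a group-theoretic statement, that Haagerup reduction enters: it extends the semifinite estimate of \cite{CPR18} to arbitrary weights. Your Step 1 is therefore also off target: the semifinite subalgebras $\cR_n$ produced by Haagerup reduction are centralizers of perturbed weights, not group von Neumann algebras of unimodular groups, so \cite[Proposition 3.9]{CJKM} cannot be ``applied inside $\cM_n$''; only the abstract ucp structure descends to $\cR_n$, which is why the abstract lemma of \cite{CPR18} is the correct input.

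Two smaller points. At the $L_2$ endpoint, $\|T_\psi(k_V)-\psi(e)k_V\|_2 \to 0$ is indeed immediate from \eqref{Eqn=PlancherelIdentity} and continuity of $\psi$ at $e$, but passing to $h_V = |k_V|$ is not achieved by ``uniform boundedness of $u_V$'': $T_\psi(u_V h_V) \ne u_V T_\psi(h_V)$ because $T_\psi$ is not a module map, so the polar decomposition does not commute with the multiplier; this transfer requires the analysis of \cite[Section 8.3]{CPPR}. Finally, the Mazur-map lemma applies only to ucp maps, so the argument first treats positive definite symbols $\zeta$ with $\zeta(e)=1$ (where $T_\zeta$ is ucp and $\vphi$-preserving) and then deduces the case of a general $\psi\in C_b(G)$ defining an $L_p$-bounded multiplier by following \cite[Proof of Claim B]{CPPR}; your proposal does not address this reduction.
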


The proof of Proposition \ref{Prop=Blackbox} is essentially a matter of combining results and remarks from \cite[Proposition 3.9]{CJKM}, \cite[Claim B and Section 8]{CPPR} and applying Haagerup reduction to \cite[Lemma 3.1]{CPR18} to generalise that estimate to general von Neumann algebras. We give more details in Section \ref{Sect=Intertwining}.

\begin{proof}[Proof of Lemma \ref{Lem=Lemma4.6}]
The idea is to use a dominated convergence argument in the last expression of \eqref{Eqn=PointwiseConvolution}. However, the functions $\phi_k$ need not be integrable. We work around this by multiplying with compactly supported functions that are close to 1 around $e$, so that as $V \in \cV$ decreases to $\{e\}$ we are just `multiplying by 1' in the limit. Define a function $\zeta \in C_c(G) \cap A(G)$ with $\zeta(e) = 1$ which is positive definite and (therefore) satisfies $\|T_{\zeta}: L_p(\cL G) \to L_p(\cL G)\| \leq 1$ for all $1 \leq p \leq \infty$. Next let
\[
    \z_j(s) = \z(r_{j-1}^{-1} s r_j), \qquad 1 \leq j \leq n,\ s \in G.
\]
We define a product function as follows:
\[
    (\phi(\z_1, \dots, \z_n))(s_1, \dots, s_n) = \phi(s_1, \dots, s_n) \z_1(s_1) \ldots \z_n(s_n).
\]
Then
\[
\begin{split}
&\| T_{\phi_k(r_0 \: \cdot\: r_1^{-1}, \ldots, r_{n-1} \: \cdot\: r_n^{-1})}(h_V^{\frac2{p_1}}, \ldots, h_V^{\frac2{p_n}}) - \phi_k(r_0r_1^{-1}, \ldots, r_{n-1}r_n^{-1})  h_V^{\frac2p} \|_{L_p(\cL G)} \\
\leq\ & \| (\phi(\z_1, \dots, \z_n))_k (r_0r_1^{-1}, \ldots, r_{n-1}r_n^{-1})  h_V^{\frac2p}- \phi_k(r_0r_1^{-1}, \ldots, r_{n-1}r_n^{-1})  h_V^{\frac2p} \|_{L_p(\cL G)} \\
& + \| T_{(\phi(\z_1, \dots, \z_n))_k(r_0 \: \cdot\: r_1^{-1}, \ldots, r_{n-1} \: \cdot\: r_n^{-1})}(h_V^{\frac2{p_1}}, \ldots, h_V^{\frac2{p_n}}) - (\phi(\z_1, \dots, \z_n))_k(r_0r_1^{-1}, \ldots, r_{n-1}r_n^{-1})  h_V^{\frac2p} \|_{L_p(\cL G)} \\
& + \|T_{\phi_k(r_0 \: \cdot\: r_1^{-1}, \ldots, r_{n-1} \: \cdot\: r_n^{-1})}(h_V^{\frac2{p_1}}, \ldots, h_V^{\frac2{p_n}}) - T_{(\phi(\z_1, \dots, \z_n))_k(r_0 \: \cdot\: r_1^{-1}, \ldots, r_{n-1} \: \cdot\: r_n^{-1})}(h_V^{\frac2{p_1}}, \ldots, h_V^{\frac2{p_n}}) \|_{L_p(\cL G)} \\
=: & A_{k,V} + B_{k,V} + C_{k,V}.
\end{split}
\]
Here $\phi((\z_1, \dots, \z_n))_k$ is defined again by \eqref{Eqn=PointwiseConvolution} for the same $i$. We will estimate these terms separately. We start by showing that $\lim_k \limsup_{V \in \cV} A_{k,V}$ and $\lim_k \limsup_{V \in \cV} C_{k,V}$ are 0, essentially reducing the problem to the integrable functions $\phi(\z_1, \dots, \z_n)$. We then apply the idea mentioned above to show that $\lim_{V \in \cV} B_{k,V} = 0$ for any $k$.\\

Firstly, since $\psi_k \to \psi$ pointwise for any $\psi \in C_b(G)^{\times n}$ we have
\[
\begin{split}
    \limsup_{V \in \cV} A_{k,V} &= |(\phi(\z_1, \dots, \z_n))_k (r_0r_1^{-1}, \ldots, r_{n-1}r_n^{-1}) - \phi_k(r_0r_1^{-1}, \ldots, r_{n-1}r_n^{-1})|\\
    &\to |\phi(r_0r_1^{-1}, \ldots, r_{n-1}r_n^{-1})(1 - \z_1(r_0r_1^{-1}) \ldots \z_n(r_{n-1} r_n^{-1}))| = 0.
\end{split}
\]
Next, we estimate the limit in $k$ of $\limsup_{V \in \cV} C_{k,V}$.
Set $u_j = t_j^{-1} r_{j-1}$, $v_j = t_j r_j$ and 
\[C_V(t_1, \dots, t_n) = \|T_{\eta}(h_V^{\frac2{p_1}}, \dots, h_V^{\frac2{p_n}})\|_{L_p(\cL G)},\]
where
\[
    \eta = (\phi - \phi(\z_1, \dots, \z_n) )(u_1 \cdot u_2^{-1}, \ldots, u_{i-1} \cdot u_i^{-1}, u_i \cdot r_i^{-1}, r_i \cdot v_{i+1}^{-1}, v_{i+1} \cdot v_{i+2}^{-1}, \dots, v_{n-1} \cdot v_n^{-1}).
\]
Thanks to the strong continuity statement of Lemma \ref{Lem=Lemma 4.3}, we can use Fubini to deduce:
\begin{equation} \label{Eqn=CkVEstimate}
C_{k,V} \leq \int_{G^{\times n}} C_V(t_1, \dots, t_n) \left(\prod_{i=1}^n |\varphi_k(t_j)|\right) dt_1 \ldots dt_n.
\end{equation}
Now set
\[
\begin{split}
    &y_{j,V} = \l_{u_j}h_V^{\frac2{p_j}} \l_{u_{j+1}}^*\ \text{for } 1 \leq j \leq i-1, 
    \qquad y_{i,V} = \l_{u_i} h_V^{\frac2{p_i}} \l_{r_i}^*, 
    \qquad \quad y_{i+1,V} = \l_{r_i} h_V^{\frac2{p_{i+1}}} \l_{v_{i+1}}^*, \\
    &y_{j,V} = \l_{v_{j-1}}h_V^{\frac2{p_j}} \l_{v_j}^* \quad \text{for } i+2 \leq j \leq n.
\end{split}
\]
Denote $\iota_q$ for the identity operator on $L_q(\cL G)$. The symbol $1$ is used both for the constant 1-function and the number 1. Then we get the following estimate, where we apply Lemma \ref{Lem=Lemma 4.3} and \eqref{Eqn=Mult-lambda_s} in the first line and Lemma \ref{Lem=Lemma4.4} and the assumption that $T_\zeta$ is a contraction in the third line:
\begin{equation}\label{Eqn=CVEstimate}
\begin{split}
    C_V(t_1, \dots, t_n) &= \|T_{(\phi - \phi(\zeta_1, \dots, \z_n))}(y_{1,V}, \dots, y_{n,V})\|_{L_p(\cL G)}\\
    &\leq \sum_{j=1}^n \|T_{\phi(1, \dots, 1, (\z_j - 1), \zeta_{j+1}, \dots, \z_n)}(y_{1,V}, \dots, y_{n,V}) \|_{L_p(\cL G)} \\
    &\leq \|T_{\phi}: L_{p_1} \times \ldots \times L_{p_n} \to L_p\| \sum_{j=1}^n\left( \|(T_{\z_j} - \iota_{p_j})(y_{j,V})\|_{L_{p_j}(\cL G)} \prod_{i \neq j} \|y_{i,V}\|_{p_i}\right).
\end{split}
\end{equation}
By \eqref{Eqn=Mult-lambda_s}, we have $\|y_{j,V}\|_{p_j} = 1$. Further, by applying again Lemma \ref{Lem=Lemma 4.3} and Proposition \ref{Prop=Blackbox},
\[
\begin{split}
    \|(T_{\zeta_j} - \iota_{p_j})(y_{j,V})\|_{L_{p_j}(\cL G)} &= \|T_{\zeta_j(u_j\: \cdot\: u_{j+1}^{-1}) - 1}(h_V^{\frac2{p_j}})\|_{L_{p_j}(\cL G)} \to |\zeta_j(u_j u_{j+1}^{-1}) - 1|
\end{split}
\]
for $1 \leq j \leq i-1$. Filling in the definition of $\zeta_j$, 
\[
    |\zeta_j(u_j u_{j+1}^{-1}) - 1| = |\zeta(r_{j-1}^{-1} t_j^{-1} r_{j-1} r_j^{-1} t_{j+1} r_j) - 1|
\]
and this equals 0 when evaluated at $t_j, t_{j+1} = e$. Similarly, we find for $i \leq j \leq n$ that $\lim_{V \in \cV} \|(T_{\zeta_j} - \iota)(y_{j,V})\|_{L_{p_j}(\cL G)}$ exists and equals 0 when evaluated at the identity in the corresponding $t_1, \dots, t_n$. Moreover, all these values are bounded by 2. Going back to \eqref{Eqn=CkVEstimate}, let us write $M:=\|T_{\phi}: L_{p_1} \times \ldots \times L_{p_n} \to L_p\|$. We find
\begin{equation} \label{Eqn=CkVEstimate2}
\begin{split}
C_{k,V} \leq& \int_{G^{\times n}} M \left(\sum_{j=1}^n \|(T_{\zeta_j} - \iota)(y_{j,V})\|_{L_{p_j}(\cL G)} \right)  \left(\prod_{j=1}^n |\varphi_k(t_j)| \right) dt_1 \ldots dt_n.
\end{split}
\end{equation}

The integrand of \eqref{Eqn=CkVEstimate2} is bounded by the integrable function $2M\prod_{i=1}^n |\varphi_k(t_j)|$. Hence, by Lebesgue's dominated convergence theorem, the right hand side of \eqref{Eqn=CkVEstimate2} converges in $V$. We find that
\[
\begin{split}
    &\limsup_{V \in \cV} C_{k,V} \leq M \int_{G^{\times n}} \left( \sum_{j=1}^n \lim_{V \in \cV} \|(T_{\zeta_j} - \iota)(y_{j,V})\|_{L_{p_j}(\cL G)} \right)\left(\prod_{i=1}^n |\varphi_k(t_j)|\right)  dt_1 \dots dt_n.
\end{split}
\]
This quantity goes to 0 in $k$. This concludes the proof for $C_{k,V}$. \\

Finally we prove that $\lim_{V \in \cV} B_{k,V} = 0$ for any $k$. We fix a $k$ for the remainder of the proof. Recall that since $\varphi_k \in A(G)$, $T_{\varphi_k}$ is bounded on $L_q(\cL G)$ for any $1 \leq q \leq \infty$. Moreover, since $\vphi_k \in C_c(G) \star C_c(G)$, we also have $\vphi_k \Delta \in C_c(G) \star C_c(G) \subseteq A(G)$ (cf. the calculation before \eqref{Eqn=EqualEmbeddings}), hence $T_{\vphi_k \Delta}$ is also bounded on $L_q(\cL G)$ for any $q$.\\

We may assume, by scaling $\varphi_k$ if necessary, that $T_{\varphi_k}: L_q(\cL G) \to L_q(\cL G)$ and $T_{\varphi_k \Delta}: L_q(\cL G) \to L_q(\cL G)$ are contractions for any H\"older combination $q$ of $p_1, \dots, p_n$. Of course, this means that $\|\varphi_k\|_1$ need no longer be $1$ from now on. Set
\[
\begin{split}
    \psi_k(s_1, \dots, s_n; t_1, \dots, t_n) &:= \left( \prod_{j=1}^i \varphi_k(r_{j-1} s_j\ldots s_{i} r_{i}^{-1} t_j)\right)\\
    & \quad \times \left( \prod_{j=i+1}^n \varphi_k(t_j r_{i} s_{i+1} \ldots s_j r_j^{-1}) \Delta(r_{i} s_{i+1} \ldots s_j r_j^{-1})\right) \\
    &=: \psi_k^1(s_1, \dots, s_i; t_1, \dots, t_i) \psi_k^2(s_{i+1}, \dots, s_n; t_{i+1}, \dots, t_n).
\end{split}
\]
By using the last term of \eqref{Eqn=PointwiseConvolution} and Fubini, we get 
\begin{equation} \label{Eqn=BkVEstimate1}
\begin{split}
    B_{k,V} &\leq \int_{G^{\times n}} |(\phi(\z_1, \dots, \z_n))_{t_1, \dots, t_n}(e, \dots, e)| \\
    & \qquad \times \| T_{\psi_k(\: \cdot \:; t_1, \dots, t_n)}(h_V^{\frac2{p_1}}, \ldots, h_V^{\frac2{p_n}}) - \psi_k(1, \ldots, 1; t_1, \dots t_n) h_V^{\frac2p} \|_{L_p(\cL G)} dt_1 \ldots dt_n.\\
\end{split}
\end{equation}
Note that $|\vphi_k| \leq 1$ by the assumed contractivity of $T_{\varphi_k}$. Indeed, for $s \in G$, apply $T_{\varphi_k}$ to $\l_s$ to deduce that $|\vphi_k(s)| \leq 1$. Hence, $|\psi_k(1, \dots, 1; t_1, \dots, t_n)| \leq \prod_{j=i+1}^n \Delta(r_ir_j^{-1})$.
Moreover, from the expression \eqref{Eqn=TpsiExpression} below we see that $\|T_{\psi_k(\: \cdot \:; t_1, \dots, t_n)}(h_V^{\frac2{p_1}}, \ldots, h_V^{\frac2{p_n}})\|_{L_p(\cL G)} \leq \Delta((t_{i+1}, \dots, t_n)^{-1})$. Since $\phi(\zeta_1, \dots, \zeta_n)$ is compactly supported, the integrand of \eqref{Eqn=BkVEstimate1} is dominated by an integrable function. Hence by the Lebesgue dominated convergence theorem, it suffices to show that the term
\begin{equation} \label{Eqn=PointwiseTerm}
    \| T_{\psi_k(\: \cdot \:; t_1, \dots, t_n)}(h_V^{\frac2{p_1}}, \ldots, h_V^{\frac2{p_n}}) - \psi_k(1, \ldots, 1; t_1, \dots t_n) h_V^{\frac2p} \|_{L_p(\cL G)}
\end{equation}
goes to 0 in $V$ for any choice of $t_1, \dots, t_n \in G$.\\

Fix $t_1, \dots, t_n \in G$. For $1 \leq j \leq i$, set $q_j^{-1} = \sum_{l=j}^{i} p_l^{-1}$ (so $q_1 = \bar{p}_1$) and $T_j = T_{\varphi_k(r_{j-1} \: \cdot \: r_{i}^{-1} t_j)}$. By Lemma \ref{Lem=Lemma 4.3}, $T_j$ is a contraction on $L_{q_j}(\cL G)$. For $i+1 \leq j \leq n$, set $q_j^{-1} = \sum_{l=j}^n  p_l^{-1}$ (so $q_{i+1} = \bar{p}_2$) and $T_j = T_{\varphi_k(t_j r_i\: \cdot \: r_j^{-1}) \Delta(r_i \: \cdot \: r_j^{-1})}$. We can estimate the norm of $T_j: L_{q_j}(\cL G) \to L_{q_j}(\cL G)$ by using again Lemma \ref{Lem=Lemma 4.3}:
\[
    \|T_j\| = \Delta(t_j^{-1}) \|T_{\vphi_k(t_j r_i\: \cdot\: r_j^{-1}) \Delta(t_j r_i\: \cdot\: r_j^{-1})}\| = \Delta(t_j^{-1}) \|T_{\vphi_k \Delta}\| \leq \Delta(t_j^{-1}).
\]
Now, by Lemma \ref{Lem=Lemma4.4b}, we have
\[
\begin{split}
    T_{\psi_k^1(\cdot; t_1, \dots, t_i)}(h_V^{\frac2{p_1}}, \dots, h_V^{\frac2{p_i}}) &= T_1(h_V^{\frac2{p_1}} T_2(h_V^{\frac2{p_2}} \dots T_i(h_V^{\frac2{p_i}}) \dots )), \\
    T_{\psi_k^2(\cdot; t_{i+1}, \dots, t_n)}(h_V^{\frac2{p_{i+1}}}, \dots, h_V^{\frac2{p_n}}) &= T_{i+1}(h_V^{\frac2{p_{i+1}}} T_{i+2}(h_V^{\frac2{p_{i+2}}} \dots T_n(h_V^{\frac2{p_n}}) \dots )).
\end{split}
\]
Clearly, $T_{\psi_k^1(\cdot; t_1, \dots, t_i)}$ is contractive as a map on $L_{p_1}(\cL G) \times \ldots \times L_{p_i}(\cL G)$. Let $x_j \in L_{p_j}(\cL G)$ with $\|x_j\|_{L_{p_j}(\cL G)} \leq 1$; then, from \eqref{Eqn=FourierProducts},
\begin{equation} \label{Eqn=TpsiExpression}
\begin{split}
    \|T_{\psi_k(\: \cdot\: ; t_1, \dots, t_n)}(x_1, \dots, x_n)\|_{L_p(\cL G)} &\leq \|T_{\psi^2_k(\: \cdot\: ; t_1, \dots, t_n)}(x_{i+1}, \dots, x_n)\|_{L_{\bar{p}_2}(\cL G)} \\
    &\leq \Delta((t_{i+1} \ldots t_n)^{-1}).
\end{split}
\end{equation}
This validates the use of the dominated convergence theorem above. Now we go back to estimating \eqref{Eqn=PointwiseTerm}. Using subsequently the triangle inequality and H\"older's inequality (with again \eqref{Eqn=FourierProducts}), we find
\[
\begin{split}
    &\| T_{\psi_k(\: \cdot \:; t_1, \dots, t_n)}(h_V^{\frac2{p_1}}, \ldots, h_V^{\frac2{p_n}}) - \psi_k(1, \ldots, 1; t_1, \dots t_n) h_V^{\frac2p} \|_{L_p(\cL G)} \\
    &\leq \|T_{\psi^1_k(\: \cdot \:; t_1, \dots, t_i)}(h_V^{\frac2{p_1}}, \ldots, h_V^{\frac2{p_i}}) \cdot \psi_k^2(1, \dots, 1; t_{i+1}, \dots, t_n) h_V^{\frac2{\bar{p}_2}} - \psi_k(1, \ldots, 1; t_1, \dots t_n) h_V^{\frac2p} \|_{L_p(\cL G)} \\
    &\quad + \| T_{\psi_k(\: \cdot \:; t_1, \dots, t_n)}(h_V^{\frac2{p_1}}, \ldots, h_V^{\frac2{p_n}}) - T_{\psi^1_k(\: \cdot \:; t_1, \dots, t_i)}(h_V^{\frac2{p_1}}, \ldots, h_V^{\frac2{p_i}}) \cdot \psi_k^2(1, \dots, 1; t_{i+1}, \dots, t_n) h_V^{\frac2{\bar{p}_2}}\|_{L_p(\cL G)}\\
    &\leq \left(\prod_{j=i+1}^n \Delta(r_ir_j^{-1})\right) \|T_{\psi^1_k(\: \cdot \:; t_1, \dots, t_i)}(h_V^{\frac2{p_1}}, \ldots, h_V^{\frac2{p_i}}) - \psi_k^1(1, \dots, 1; t_1, \dots, t_i) h_V^{\frac2{\bar{p}_1}}\|_{L_{\bar{p}_1}(\cL G)} \\
    &\quad + \|T_{\psi_k^2(\cdot; t_{i+1}, \dots, t_n)}(h_V^{\frac2{p_{i+1}}}, \dots, h_V^{\frac2{p_n}}) - \psi_k^2(1, \dots, 1; t_{i+1}, \dots, t_n) h_V^{\frac2{\bar{p}_2}} \|_{L_{\bar{p}_2}(\cL G)} \\
    & =: B_{k,V}^1 + B_{k,V}^2.
\end{split}
\]
We show only that $\lim_{V \in \cV} B_{k,V}^2 = 0$; the equality $\lim_{V \in \cV} B_{k,V}^1 = 0$ follows similarly and is in fact slightly easier since the $T_j$ are contractions for $j \leq i$. Now set, for $i \leq j \leq n$,
\[
    R_{j,V} := \left(\prod_{l=j+1}^{n} \varphi_k(t_l r_i r_l^{-1}) \right) T_{i+1}(h_V^{\frac2{p_{i+1}}} \ldots T_j( h_V^{\frac2{q_j}}) \ldots ).
\]
Here $R_{i,V} = \prod_{l=i+1}^{n-1} \varphi_k(t_l r_i r_l^{-1}) h_V^{\frac2{q_1}}$. Then 
\[
    B_{k,V}^2 \leq \sum_{j=i+1}^n \|R_{j,V} - R_{j-1, V}\|_{L_{\bar{p}_2}(\cL G)}.
\]
Recall that $|\vphi_k| \leq 1$. Hence
\[
\begin{split}
    &\|R_{j,V} - R_{j-1,V}\|_{L_{\bar{p}_2}(\cL G)} \\
    &= \left(\prod_{l=j+1}^n |\varphi_k(t_l r_i r_l^{-1})|\right) \|T_{i+1}(h_V^{\frac2{p_{i+1}}} \ldots T_{j-1}(h_V^{\frac2{p_{j-1}}}(T_j(h_V^{\frac2{q_j}}) - \varphi_k(t_j r_i r_j^{-1}) h_V^{\frac2{q_j}})) \ldots )\|_{L_{\bar{p}_2}(\cL G)} \\
    &\leq \Delta((t_{i+1} \ldots t_n)^{-1}) \|T_j(h_V^{\frac2{q_j}}) - \vphi_k(t_j r_i r_j^{-1}) h_V^{\frac2{q_j}}\|_{L_{q_j}(\cL G)}.
\end{split}
\]
We know that $q_j > \bar{p}_2 > 1$ for any $i+1 \leq j \leq n$. Additionally, $T_j$ is bounded on $\cL G$ and $L_1(\cL G)$. By Proposition \ref{Prop=Blackbox}, the above terms converge to 0 in $V$. Hence, $\lim_{V \in \cV} B_{k,V}^2 = 0$. This finishes the proof. 
\end{proof}

\begin{remark}
As in the unimodular case (see \cite[Remark 3.3]{CKV}) we do not know if Theorem \ref{Thm=FouriertoSchur} holds if $p = p_i = 1$ for some $1 \leq i \leq n$ (and $p_j = \infty$ for all $j \neq i$). The proof above fails in that case because we cannot apply Proposition \ref{Prop=Blackbox}.
\end{remark}

\section{Schur to Fourier transference for amenable groups} \label{Sect=SchurToFourier}

In this section we extend \cite[Proof of Theorem 4.1]{CKV}, i.e. the transference from Schur multipliers to Fourier multipliers for amenable groups, to the non-unimodular setting. The proof is essentially the same, but with extra technicalities due to the modular function. We also fixed a small mistake in the proof, as will be mentioned at the relevant spot.\\

Recall \cite[Theorem 4.10]{Paterson} that $G$ is amenable iff it satisfies the following F\o lner condition: for any $\epsilon>0$ and any compact set $K\subseteq G$, there exists a compact set $F \subseteq G$ with non-zero measure such that $\frac{\mu((sF \setminus F) \cup (F \setminus sF))}{\mu(F)} < \epsilon$ for all $s\in K$. This allows us to construct a net $F_{(\epsilon,K)}$ of such F\o lner sets using the ordering $( \epsilon_1, K_1) \leq (\epsilon_2,K_2)$ if $\epsilon_1 \geq \epsilon_2, K_1 \subseteq K_2$.

\begin{theorem}\label{thm:amenable-intertwining}
Let $G$ be a locally compact, amenable group and let $1\leq p,p',p_1,\ldots,p_n \leq \infty$ be such that $\frac1p=\sum_{i=1}^n \frac1{p_i} = 1 - \frac1{p'}$. Let $\phi \in L_\infty(G^{\times n})$ and define $\widetilde{\phi} \in  L_\infty(G^{\times n + 1})$ by
\begin{equation*}
    \widetilde{\phi}(s_0, \ldots, s_n) = \phi(s_0 s_1^{-1}, s_1 s_2^{-1}, \ldots, s_{n-1} s_n^{-1}), \qquad s_i \in G.
\end{equation*}
Assume that $\widetilde{\phi}$ defines a $(p_1,\ldots,p_n)$-Schur multiplier of $G$. Then there is a net $I$ and there are complete contractions $i_{q,\alpha}: L_q(\cL G) \to  S_q(L_2(G))$, $\alpha \in I$, such that for all $f_i,f \in C_c(G)\star C_c(G)$,
\begin{equation}\label{Eqn=AmenableIntertwining}
\left|\langle i_{p,\alpha}(T_\phi(x_1,\ldots,x_n)),i_{p',\alpha}(y) \rangle -  \langle M_{\widetilde{\phi}}( i_{p_1,\alpha}(x_1),\ldots,i_{p_n,\alpha}(x_n) ) , i_{p',\alpha}(y) \rangle \right| \stackrel{\alpha}\to 0,
\end{equation}
where $x_i = \Delta^{a_i}\lambda(f_i)\Delta^{b_i} \in L^{p_i}(\cL G),\ y = \Delta^a \lambda(f) \Delta^b \in L^{p'}(\cL G)$ (i.e. $a_i + b_i = \frac1{p_i}$). In a similar way, the matrix amplifications of $i_{q,\alpha}$ approximately intertwine the multiplicative amplifications of the Fourier and Schur multipliers.
\end{theorem}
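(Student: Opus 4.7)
The natural candidates for the intertwining maps are F\o lner compressions. Fix a F\o lner net $(F_\alpha)_{\alpha \in I}$ of compact subsets of $G$ and define
\[
i_{q,\alpha}(x) = c_{q,\alpha}\, P_{F_\alpha} x P_{F_\alpha}, \qquad 1 \leq q \leq \infty,
\]
with $c_{q,\alpha}$ the normalization (morally $\mu(F_\alpha)^{-1/q}$, up to modular corrections in the non-unimodular case) needed to make $i_{q,\alpha}$ a complete contraction $L_q(\cL G) \to S_q(L_2(G))$. Complete contractivity follows from an adaptation of \cite[Proposition 3.3]{CaspersDeLaSalle} combined with the spatial-derivative manipulations of Section \ref{Sect=Prelims}; for $q = \infty$ one may take $c_{\infty,\alpha} = 1$, in which case the claim is just that the restriction of an operator to a closed subspace is a complete contraction.

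The core of the proof is a kernel comparison on $F_\alpha \times F_\alpha$. By Lemma \ref{Lem=Fourier kernel} applied to $x_i = \Delta^{a_i} \lambda(f_i) \Delta^{b_i}$, the kernel of $P_{F_\alpha} T_\phi(x_1,\dots,x_n) P_{F_\alpha}$ at $(t_0, t_n)$ equals
\[
1_F(t_0) 1_F(t_n) \Delta^{a_1}(t_0) \Delta^{b_n - 1}(t_n) \int_{G^{n-1}} \phi(t_0 t_1^{-1},\ldots,t_{n-1} t_n^{-1}) \prod_{i=1}^n f_i(t_{i-1} t_i^{-1}) \prod_{j=1}^{n-1} \Delta^{b_j + a_{j+1} - 1}(t_j)\, dt_1 \cdots dt_{n-1},
\]
where $F = F_\alpha$. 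A direct expansion using \eqref{Eqn=Kernel of single operator} for each factor $P_F x_i P_F$ inside the definition of $M_{\widetilde{\phi}}$ produces exactly the same integrand, integrated over $F^{n-1}$ rather than $G^{n-1}$: the combined $\Delta$-exponents on each $t_j$ match precisely, and each pair of adjacent cutoffs $1_F(t_j)$ appearing from consecutive factors collapse. Hence the two kernels differ by an integral supported on the set where at least one intermediate $t_j$ lies outside $F$, while the factors $f_i(t_{i-1} t_i^{-1})$ restrict each such $t_j$ to a compact translate $K F$ with $K$ depending only on $\supp f_1, \ldots, \supp f_n$.

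Unfolding the dual pairing \eqref{Eqn=KernelsDualPairing} with $i_{p',\alpha}(y) = c_{p',\alpha} P_F y P_F$ then produces a double integral over $F \times F$ of the kernel difference against $(P_F y P_F)(t_n, t_0)$. After the prefactor $c_{p,\alpha} c_{p',\alpha}$ absorbs the measure $\mu(F)^2$ (using $1/p + 1/p' = 1$), the whole expression is bounded by $C \cdot \mu(KF \setminus F)/\mu(F)$, where $C$ depends on the $L_\infty$-norms of $\phi, f_1, \dots, f_n, f$ and on $\sup_K \Delta$ over the relevant compact set. The F\o lner property of $(F_\alpha)$ forces this to $0$. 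The matrix-amplified statement is identical: one replaces scalar kernels by $M_N$-valued ones and applies the same F\o lner estimate entrywise.

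The principal obstacle is the bookkeeping of modular function powers: the exponents $a_i, b_i-1, b_j + a_{j+1}-1$ appear in both kernels and must match exactly for the comparison to cancel cleanly. Because every $f_i, f \in C_c(G) \star C_c(G)$ is compactly supported, all integrations are over compact sets on which $\Delta$ is bounded, so these factors contribute only a finite multiplicative constant to the F\o lner estimate. A secondary technical point is the verification of complete contractivity of $i_{q,\alpha}$ in the non-tracial setting: the tracial argument needs to be adjusted by inserting spatial derivatives, which can be done either directly using the embeddings $\kappa_q^\theta$ or, if needed, by Haagerup reduction as sketched in Section \ref{Sect=Intertwining}.
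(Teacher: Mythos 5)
Your proposal is correct and follows essentially the same route as the paper: F\o lner compressions $i_{q,\alpha}(x)=\mu(F_\alpha)^{-1/q}P_{F_\alpha}xP_{F_\alpha}$, the observation that the two kernels share the same integrand (Lemma \ref{Lem=Fourier kernel} versus iterated use of \eqref{Eqn=Kernel of single operator}) and differ only in integrating the intermediate variables over $F_\alpha^{\times(n-1)}$ rather than $G^{\times(n-1)}$, with the compact supports of the $f_i$ confining the discrepancy to a F\o lner boundary region. The only cosmetic differences are that the paper changes variables $t_{j-1}=k_jt_j$ and bounds the error by $M(n-1)\mu(K)^n\sup_{k\in K^{1-n}}\mu(F_\alpha\setminus kF_\alpha)/\mu(F_\alpha)$, staying within the pointwise F\o lner condition instead of invoking $\mu(KF_\alpha\setminus F_\alpha)/\mu(F_\alpha)$, and that the total normalization entering the pairing is $\mu(F_\alpha)^{-1/p-1/p'}=\mu(F_\alpha)^{-1}$ rather than a factor absorbing $\mu(F_\alpha)^{2}$.
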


\begin{proof}
Let $F_\alpha, \alpha \in I$ be a F\o lner net for $G$, where $I$ is the index set consisting of pairs $(\epsilon, K)$ for $\epsilon > 0$, $K \subseteq G$ compact and the ordering as described above. \\

Let $P_\alpha=P_{F_\alpha}$ be the projection of $L_2(G)$ onto $L_2(F_\alpha)$. Consider the maps $i_{p,\alpha}: L_p(\cL G)\to S_p(L_2(G))$ defined by $i_{p,\alpha}(x) = \mu(F_\alpha)^{-1/p} P_{\alpha}x P_{\alpha}$. They are contractions by \cite[Theorem 5.1]{CaspersDeLaSalle}. By replacing $G$ by $G \times SU(2)$, one proves that they are in fact complete contractions (see also the last paragraph of \cite[Proof of Theorem 5.2]{CaspersDeLaSalle}).\\

Now fix $\alpha$. From \eqref{Eqn=Kernel of single operator}, we deduce
\begin{align*}
M_{\widetilde{\phi}}  (i_{p_1, \alpha}(x_1),\ldots,i_{p_n, \alpha}(x_n)) (t_0,t_{n})  \\ =
\frac{1}{\mu(F_\alpha)^{1/p}} 1_{F_\alpha}(t_0)1_{F_\alpha}(t_n) \int_{F_\alpha^{\times n-1}} &\phi(t_0t_1^{-1},\ldots, t_{n-1}t_n^{-1}) f_1(t_0t_1^{-1})\ldots f_n(t_{n-1}t_n^{-1}) \times \\
& \Delta^{a_1}(t_0) \Delta^{b_1 + a_2}(t_1) \ldots \Delta^{b_n}(t_n) \Delta((t_1 \ldots t_n)^{-1}) dt_1 \ldots dt_{n-1}.
\end{align*}
From Lemma \ref{Lem=Fourier kernel} we have a similar expression for the kernel of $i_{p,\alpha}(T_\phi(x_1, \dots, x_n))$:
\[
\begin{split}
    (t_0,t_n) \mapsto \frac1{\mu(F_{\alpha})^{1/p}} 1_{F_{\alpha}}(t_0) 1_{F_{\alpha}}(t_n) & \int_{G^{\times n-1}} \phi(t_0t_1^{-1},\ldots,t_{n-1}t_n^{-1}) f_1(t_0t_1^{-1})\ldots f_n(t_{n-1}t_n^{-1})   \times \\
    & \Delta^{a_1}(t_0) \Delta^{b_1 + a_2}(t_1) \ldots \Delta^{b_n}(t_n) \Delta((t_1 \ldots t_n)^{-1}) dt_1\ldots dt_{n-1}.
\end{split}
\]\

Now we need to take the pairing of these kernels with $i_{p',\alpha}(y)$ and calculate their difference. To that end, we define the following function $\Phi$:
\[
\begin{split}
 \Phi(t_0,\ldots,t_n) = &\phi(t_0t_1^{-1},\ldots,t_{n-1}t_n^{-1}) f_1(t_0t_1^{-1})\ldots f_n(t_{n-1}t_n^{-1}) f(t_n t_0^{-1}) \times \\
 &  \Delta^{a_1+b}(t_0) \Delta^{b_1 + a_2}(t_1) \ldots \Delta^{b_n+a}(t_n) \Delta((t_0t_1 \ldots t_n)^{-1}),
\end{split}
\]
and the function $\Psi_{\alpha}$:
\[
\Psi_\alpha(t_0,\ldots, t_n) = 1_{F_\alpha}(t_0) 1_{F_\alpha} (t_n) - 1_{F_\alpha^{\times n+1}}(t_0,\ldots, t_n) = 1_{F_\alpha \times (F_{\alpha}^{\times n-1})^c \times F_{\alpha}}(t_0, \dots, t_n).
\]
Note that in \cite{CKV}, the indicator function was mistakenly taken over $F_\alpha \times (F_\alpha^c)^{\times n-1} \times F_{\alpha}$ instead. This correction leads to an extra term $n$ in the choice of the lower bound of $\alpha$ at the end. Also note that a priori, it is not clear that $T_\phi(x_1, \dots, x_n)$ lies in $L_p(\cL G)$, and hence it is not clear that $i_{p,\alpha}(T_\phi(x_1, \dots, x_n))$ lies in $S_p(L_2(G))$. However, both $i_{p,\alpha}(T_\phi(x_1, \dots, x_n))$ and $i_{p',\alpha}(y)$ are given by integration against a kernel in $L_2(G \times G)$, so the pairing \eqref{Eqn=KernelsDualPairing} is still well-defined as a pairing in $S_2(L_2(G))$ instead. Now we have:
\begin{equation} \label{Eqn=IntertwiningStep1}
\begin{split}
&\vert \langle i_{p,\alpha}(T_\phi(x_1,\ldots,x_n)),i_{p',\alpha}(y) \rangle - \langle M_{\widetilde{\phi}}( i_{p_1,\alpha}(x_1),\ldots,i_{p_n,\alpha}(x_n) )  , i_{p',\alpha}(y) \rangle \vert \\
&= \left\vert \frac{1}{\mu(F_\alpha)} \int_{G^{\times n+1}} \Phi(t_0,\ldots,t_n) \Psi_\alpha(t_0,\ldots,t_n) dt_0 \ldots dt_n \right\vert \\
\end{split}
\end{equation}

Let $K \subseteq G$ be some compact set such that $\supp(f_j),\supp(f)\subseteq K$ and $e \in K$.  Let $t_0, \dots, t_n$ be such that both $\Phi(t_0, \dots, t_n)$ and $\Psi_\alpha(t_0, \dots, t_n)$ are nonzero. Since $\Psi_\alpha(t_0, \dots, t_n)$ is nonzero, we must have $t_0, t_n \in F_\alpha$ and $t_i \notin F_\alpha$ for some $i \in \{1, \dots, n-1\}$. Since $\Phi(t_0, \dots, t_n)$ is nonzero, there are $k_1,\ldots,k_n \in K$ such that $t_{n-1} = k_n t_n,\ t_{n-2} = k_{n-1}k_n t_n,\ \ldots  ,\ t_0 = k_1 \ldots k_n t_n$. Hence we find 
\begin{equation} \label{Eqn=SetManipulations}
\begin{split}
t_n &\in F_\alpha \cap (k_1\ldots k_n)^{-1}F_\alpha \setminus \left( (k_2\ldots k_n)^{-1}F_\alpha \cap \ldots \cap k_n^{-1}F_\alpha \right) \\
&\subseteq F_\alpha \setminus \left( (k_2\ldots k_n)^{-1}F_\alpha \cap \ldots \cap k_n^{-1}F_\alpha \right) \\
&= (F_\alpha \setminus (k_2\ldots k_n)^{-1}F_\alpha) \cup \ldots \cup (F_\alpha \setminus k_n^{-1} F_\alpha)
\end{split}
\end{equation}
We want to apply change of variables in \eqref{Eqn=IntertwiningStep1}. Let us first look at a simple case: assume $g \in L_1(G \times G)$ is such that $g(s,t) \neq 0$ only when $st^{-1} \in K$. Then
\[
\begin{split}
    \int_{G^{\times 2}} g(s,t) ds dt &= \int_{G^{\times 2}} 1_K(s t^{-1}) g(s, t) ds dt 
    = \int_{G^{\times 2}} 1_K(s) g(s t, t) \Delta(t) ds dt \\
    &= \int_G \int_K g(k_1 t, t) \Delta(t) dk_1 dt \\
\end{split}
\]
where we renamed the variable $s$ in the last line. Applying the above formula twice for a function $g \in L_1(G^{\times 3})$ such that $g(r,s,t) \neq 0$ only when $rs^{-1} \in K$, $st^{-1} \in K$, we get
\[
\begin{split}
    \int_{G^{\times 3}} g(r,s,t) dr ds dt &= \int_{G^{\times 2}} \int_K g(k_1 s, s, t) \Delta(s) dk_1 ds dt \\
    &= \int_G \int_{K^{\times 2}} g(k_1k_2 t, k_2t, t) \Delta(k_2t) \Delta(t) dk_1 dk_2 dt.
\end{split}
\]

Carrying on like this, we obtain
\begin{equation} \label{Eqn=IntertwiningStep2}
\begin{split}
    &\left\vert \frac{1}{\mu(F_\alpha)} \int_{G^{\times n+1}} \Phi(t_0,\ldots,t_n) \Psi_\alpha(t_0,\ldots,t_n) dt_0 \ldots dt_n \right\vert \\
    &= \bigg\vert \frac{1}{\mu(F_\alpha)} \int_{K^{\times n}} \int_{G} \Phi(k_1\ldots k_n t_n, \ldots , k_n t_n, t_n)  \Psi_\alpha(k_1\ldots k_nt_n,\ldots ,k_nt_n, t_n) \times \\
    & \qquad \Delta(k_2 \dots k_n t_n) \dots \Delta(k_nt_n) \Delta(t_n) dt_n dk_1 \ldots dk_n  \bigg\vert. \\
\end{split}
\end{equation}
Note that $a + b + \sum_{i=1}^n a_i + b_i = 1$, hence
\begin{equation} \label{Eqn=IntertwiningStep3}
\begin{split}
    &|\Phi(k_1\ldots k_n t_n, \ldots , k_n t_n, t_n)|\Delta(k_2 \dots k_n t_n) \dots \Delta(k_nt_n) \Delta(t_n)  \\
    &\leq \|\phi f_1 \dots f_n f\|_{\infty} \Delta^{a_1+b}(k_1\ldots k_n t_n) \Delta^{b_1+a_2+1}(k_2 \ldots k_n t_n) \ldots \Delta^{b_{n-1} + a_n+1}(k_nt_n) \times \\
    & \qquad \Delta^{b_n + a+1}(t_n)\Delta(k_1^{-1} k_2^{-2} \dots k_n^{-n} t_n^{-n-1}) \\
    &= \|\phi f_1 \dots f_n f\|_{\infty} \Delta^{a_1+b-1}(k_1) \Delta^{a_1+a_2+b_1+b-1}(k_2) \ldots \Delta^{1-b_n-a-1}(k_n)  \\
    &\leq \|\phi f_1 \dots f_n f\|_{\infty} C_{K,n} =: M.
\end{split}
\end{equation}
Here the constant $C_{K,n}$ can be chosen to be dependent only on $K$ and $n$ (and $G$).\\

Applying \eqref{Eqn=IntertwiningStep1}, \eqref{Eqn=IntertwiningStep2}, \eqref{Eqn=IntertwiningStep3} and \eqref{Eqn=SetManipulations} consecutively we get
\begin{equation} \label{Eqn=UltraIntertwining}
\begin{split}
&\vert \langle i_{p,\alpha}(T_\phi(x_1,\ldots,x_n)),i_{p',\alpha}(y) \rangle_{p,p'} - \langle M_{\widetilde{\phi}}( i_{p_1,\alpha}(x_1),\ldots,i_{p_n,\alpha}(x_n) )  , i_{p',\alpha}(y) \rangle \vert \\
&= \bigg\vert \frac{1}{\mu(F_\alpha)} \int_{K^{\times n}} \int_{G} \Phi(k_1\ldots k_n t_n, \ldots , k_n t_n, t_n)  \Psi_\alpha(k_1\ldots k_nt_n,\ldots ,k_nt_n, t_n) \times \\
    & \qquad \Delta(k_2 \dots k_n t_n) \dots \Delta(k_nt_n) \Delta(t_n) dt_n dk_1 \ldots dk_n  \bigg\vert \\
&\leq \frac{M}{\mu(F_\alpha)} \int_{K^{\times n}} \int_G \Psi_\alpha(k_1\ldots k_nt_n,\ldots ,k_nt_n, t_n) dt_n dk_1 \ldots dk_n \\
&= \frac{M}{\mu(F_\alpha)} \int_{K^{\times n}}  \mu\left(F_\alpha \cap (k_1\ldots k_n)^{-1}F_\alpha \setminus \left( (k_2\ldots k_n)^{-1}F_\alpha \cap \ldots \cap k_n^{-1}F_\alpha \right)\right) dk_1 \ldots dk_n\\
&\leq \frac{M}{\mu(F_\alpha)} \int_{K^{\times n}} \sum_{i=2}^n \mu(F_\alpha \setminus (k_i \ldots k_n)^{-1}F_\alpha)  dk_1 \ldots dk_n\\
& \leq M (n-1) \mu(K)^n \sup_{k \in K^{1-n}} \frac{\mu(F_{\alpha} \setminus kF_{\alpha})} {\mu(F_\alpha)}.
\end{split}
\end{equation}

Using the ordering described earlier, if the index $\alpha \geq (\epsilon \times \left(Mn \mu_G(K)^n\right)^{-1},  K^{1-n})$, then the F\o lner condition implies that \eqref{Eqn=UltraIntertwining} is less than $\epsilon$, and hence the limit \eqref{Eqn=AmenableIntertwining} holds.\\

From \eqref{Eqn=AmenableIntertwining}, it follows from writing out the definitions that the matrix amplifications of $i_{p,\alpha}$ also approximately intertwine the multiplicative amplifications of the Fourier and Schur multipliers. i.e. for $\beta_i \in S_{p_i}^N, \beta \in S_{p'}^{N}$, we have 

\begin{equation}
\begin{split}
 \Big|\langle \id \otimes & i_{p,\alpha}(T^{(N)}_\phi(\beta_1\otimes x_1,\ldots,\beta_n \otimes x_n)),  \id\otimes i_{p',\alpha}(\beta \otimes y) \rangle_{p,p'} - \\
   &\langle M^{(N)}_{\widetilde{\phi}}(\id\otimes i_{p_1,\alpha}(\beta_1 \otimes x_1),\ldots,\id\otimes i_{p_n,\alpha}(\beta_n \otimes x_n) ) , \id\otimes i_{p',\alpha}(\beta\otimes y) \rangle \Big| \to 0
  \end{split}
  \end{equation}
\end{proof}

\begin{corollary} \label{Cor=Schur to Fourier}
Let $G$ be an amenable locally compact group and $1 \leq p, p_1, \dots, p_n \leq \infty$ be such that $\frac1p=\sum_{i=1}^n \frac1{p_i}$. Let $\phi \in  L_\infty(G^{\times n})$. If $\widetilde{\phi}$ is the symbol of a $(p_1, \dots, p_n)$-bounded (resp. multiplicatively bounded) Schur multiplier then $\phi$ is the symbol of a $(p_1, \dots, p_n)$-bounded (resp. multiplicatively bounded) Fourier multiplier. Moreover, 
\[
    \|T_\phi\|_{(p_1, \dots, p_n)} \leq \|M_{\widetilde{\phi}}\|_{(p_1, \dots, p_n)}, \qquad \|T_\phi\|_{(p_1, \dots, p_n)-mb} \leq \|M_{\widetilde{\phi}}\|_{(p_1, \dots, p_n)-mb}.
\]
\end{corollary}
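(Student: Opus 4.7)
The plan is to deduce Corollary \ref{Cor=Schur to Fourier} from Theorem \ref{thm:amenable-intertwining} by $L_p$-$L_{p'}$ duality, where $p'$ is H\"older conjugate to $p$. First, I would fix $x_i = \Delta^{a_i}\lambda(f_i)\Delta^{b_i} \in \k_{p_i}^0(L)$ and $y = \Delta^a \lambda(f) \Delta^b \in \k_{p'}^0(L)$ with $f_i, f \in C_c(G) \star C_c(G)$. Since each $i_{q,\alpha}$ is a contraction, Theorem \ref{thm:amenable-intertwining} combined with the boundedness of $M_{\widetilde{\phi}}$ immediately yields
\[
    \limsup_\alpha \bigl|\langle i_{p,\alpha}(T_\phi(x_1, \dots, x_n)), i_{p',\alpha}(y)\rangle\bigr| \leq \|M_{\widetilde{\phi}}\|_{(p_1, \dots, p_n)} \prod_i \|x_i\|_{p_i} \|y\|_{p'}.
\]

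Next I would show that the limit on the left actually exists and coincides with the canonical Plancherel pairing $\langle T_\phi(x_1, \dots, x_n), y \rangle$. Using Lemma \ref{Lem=Fourier kernel} for the kernel of $P_{F_\alpha}T_\phi(x_1, \dots, x_n)P_{F_\alpha}$ together with \eqref{Eqn=Kernel of single operator} for the kernel of $P_{F_\alpha} y P_{F_\alpha}$, the pairing rewrites as $\mu(F_\alpha)^{-1} \int_{F_\alpha \times G^{n-1} \times F_\alpha} \Phi$, where $\Phi$ is the translation-invariant integrand introduced in the proof of Theorem \ref{thm:amenable-intertwining}. Performing the substitution $u_i = t_{i-1} t_i^{-1}$, $v = t_n$ as in that proof, the modular-function powers in $v$ cancel exactly (because $a + b + \sum_i (a_i+b_i) = 1$), and the $v$-integration reduces to the F\o lner ratio $\mu(F_\alpha)^{-1} \mu(F_\alpha \cap (u_1\ldots u_n)^{-1}F_\alpha)$, which tends to $1$ uniformly on the compact set where the remaining integrand is nonzero. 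Dominated convergence then identifies the limit with an explicit integral that, by a direct computation using the formula $\tau(\Delta^c \lambda(F) \Delta^{1-c}) = F(e)$ for the Plancherel weight, is precisely the canonical pairing $\langle T_\phi(x_1, \dots, x_n), y\rangle$.

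Combining these steps,
\[
    |\langle T_\phi(x_1, \dots, x_n), y\rangle| \leq \|M_{\widetilde{\phi}}\|_{(p_1, \dots, p_n)} \prod_i \|x_i\|_{p_i} \|y\|_{p'}
\]
on the dense subspaces $\k_{p_i}^0(L)$ and $\k_{p'}^0(L)$. Via $L_p$-$L_{p'}$ duality this forces $T_\phi(x_1, \dots, x_n) \in L_p(\cL G)$ with the claimed norm bound, and density of $\k_{p_i}^0(L)$ in $L_{p_i}(\cL G)$ extends $T_\phi$ to a bounded multilinear map (with $C_\lambda^*(G)$ replacing $L_\infty(\cL G)$ in any leg with $p_i = \infty$). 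The $(p_1, \dots, p_n)$-multiplicatively bounded version follows by rerunning the argument with every map replaced by its matrix amplification and appealing to the amplified intertwining provided by Theorem \ref{thm:amenable-intertwining}, using that the $i_{q,\alpha}$ are complete contractions.

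The main obstacle is the second step: the identification of the limit of $\langle i_{p,\alpha}(T_\phi(x)), i_{p',\alpha}(y)\rangle$ with the canonical Plancherel pairing. This requires careful bookkeeping of the modular-function factors through the change of variables in order to verify the exact cancellation in the non-tracial setting, and the F\o lner averaging must be applied to $\Phi$, which is only translation invariant rather than compactly supported, so the $F_\alpha$ cut-offs play an essential role.
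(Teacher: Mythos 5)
Your argument is correct and follows the same overall strategy as the paper: test $T_\phi(x_1,\dots,x_n)$ against $y$ in the dense subspace $\k_{p'}^0(L)$ of $L_{p'}(\cL G)$, pass to the truncated pairings $\langle i_{p,\alpha}(\cdot), i_{p',\alpha}(\cdot)\rangle$, apply the intertwining of Theorem \ref{thm:amenable-intertwining} together with complete contractivity of the $i_{q,\alpha}$, and conclude by duality and density. The one genuine difference is in how the convergence $\langle i_{p,\alpha}(T_\phi(x_1,\dots,x_n)), i_{p',\alpha}(y)\rangle \to \langle T_\phi(x_1,\dots,x_n), y\rangle$ is obtained: the paper simply invokes \eqref{Eqn=ApproxIsometry}, i.e.\ \cite[Theorem 5.2]{CaspersDeLaSalle}, whereas you re-derive this convergence by hand via the kernel formulas of Lemma \ref{Lem=Fourier kernel} and \eqref{Eqn=Kernel of single operator}, the change of variables $t_{j-1} = k_j\cdots k_n t_n$, the cancellation of the $\Delta(t_n)$-powers coming from $a+b+\sum_i(a_i+b_i)=1$, and the F\o lner ratio. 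Your route costs an extra (nontrivial but routine) computation that essentially reproves the relevant special case of \cite[Theorem 5.2]{CaspersDeLaSalle}; what it buys is that you never need to apply \eqref{Eqn=ApproxIsometry} to the element $T_\phi(x_1,\dots,x_n)$, which a priori is only known to lie in $\overline{\cL G}_{(-1/p)}$ rather than in $L_p(\cL G)$ -- a point the paper itself flags in the proof of Theorem \ref{thm:amenable-intertwining} and then glosses over in the corollary. Both versions share the same residual duality step (that boundedness of $y\mapsto\langle T_\phi(x_1,\dots,x_n),y\rangle$ on the dense subspace forces membership in $L_p(\cL G)$ with the corresponding norm), so you are not introducing any new gap there.
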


\begin{proof}
Let $x_i$ be as in the hypotheses of Theorem \ref{thm:amenable-intertwining} and let $i_{p,\alpha}$ be as in the proof of Theorem \ref{thm:amenable-intertwining}. Let $p'$ be the Holder conjugate of $p$. In \cite[Theorem 5.2]{CaspersDeLaSalle} it is proven that 
\begin{equation} \label{Eqn=ApproxIsometry}
    \la i_{p,\alpha}(x), i_{p', \alpha}(y) \ra_{p,p'}\to \la x, y \ra_{p,p'}, \qquad x \in L_p(\cL G),\ y \in L_{p'}(\cL G).
\end{equation}
Note that this inequality also holds, and in fact is explicitly proven for $p = \infty$; by symmetry it also holds for $p=1$. We remark that this result also uses the F\o lner condition.\\

Let $\epsilon > 0$. Then we can find $y = \Delta^{a} \lambda(f) \Delta^b$, $f \in C_c(G) \star C_c(G)$ such that $\|y\|_{L_{p'}(\cL G)} \leq 1$ and 
\[
    \|T_{\phi}(x_1, \dots, x_n)\|_{L_p(\cL G)} \leq |\la T_{\phi}(x_1, \dots, x_n), y \ra| + \epsilon.
\]
Next, by \eqref{Eqn=ApproxIsometry} and Theorem \ref{thm:amenable-intertwining} we can find $\alpha \in I$ such that the following two inequalities hold:
\[
    |\la T_{\phi}(x_1, \dots, x_n), y \ra - \la i_{p,\alpha}(T_{\phi}(x_1, \dots, x_n)), i_{p', \alpha}(y) \ra| < \epsilon
\]
and
\[
    \vert \langle i_{p,\alpha}(T_\phi(x_1,\ldots,x_n)),i_{p',\alpha}(y) \rangle_{p,p'} - \langle M_{\widetilde{\phi}}( i_{p_1,\alpha}(x_1),\ldots,i_{p_n,\alpha}(x_n) )  , i_{p',\alpha}(y) \rangle_{p,p'} \vert < \epsilon.
\]
By combining these inequalities we find
\[
\begin{split}
    \|T_{\phi}(x_1, \dots, x_n)\|_{L_p(\cL G)} &\leq |\langle M_{\widetilde{\phi}}( i_{p_1,\alpha}(x_1),\ldots,i_{p_n,\alpha}(x_n) )  , i_{p',\alpha}(y) \rangle_{p,p'}| + 3\epsilon \\
    &\leq \|M_{\widetilde{\phi}}\|_{(p_1, \dots, p_n)} \prod_{i=1}^n \|x_i\|_{L_{p_i}(\cL G)} + 3\epsilon
\end{split}
\]
The elements $x_i$ as chosen above are norm dense in $L_{p_i}(\cL G)$ (resp. $C^*_\lambda(G)$ when $p_i = \infty$), hence we get the required bound. The multiplicative bound follows similarly.
\end{proof}

\begin{remark}
In \cite{CaspersDeLaSalle}, \cite{CKV}, the proof runs via an ultraproduct construction. The ultraproduct is not actually necessary as demonstrated above, as all limits are usual limits and not ultralimits.
\end{remark}

We can now extend the result of \cite[Corollary 4.3]{CKV} to non-unimodular groups. It is also a multiplicatively bounded, non-unimodular version of \cite[Theorem 4.5]{CJKM}. Moreover, we no longer need the SAIN condition and the subgroup need no longer be discrete.

\begin{corollary}
Let $G$ be a locally compact, first countable group and let $1 \leq p \leq \infty$ and $1 < p_1, \dots, p_n \leq \infty$ with $p^{-1} = \sum_{i=1}^n p_i^{-1}$. Let $\phi \in C_b(G^{\times n})$ which defines a $(p_1, \dots, p_n)$-mb Fourier multiplier and let $H \leq G$ be an amenable subgroup. Then
\[
    \|T_{\phi|_{H^{\times n}}}\|_{(p_1, \dots, p_n)-mb} \leq \|T_{\phi}\|_{(p_1, \dots, p_n)-mb}
\]
\end{corollary}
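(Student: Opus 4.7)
The proof is a clean three-step chaining of the two transference results together with the finite-subset characterization of Schur multiplier norms.

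First, since $G$ is locally compact first countable and $\phi \in C_b(G^{\times n})$ defines a $(p_1, \dots, p_n)$-mb Fourier multiplier, Theorem~\ref{Thm=FouriertoSchur} yields that $\widetilde{\phi}$ defines a $(p_1, \dots, p_n)$-mb Schur multiplier on $G$ with
\[
    \|M_{\widetilde{\phi}}\|_{(p_1, \dots, p_n)-mb} \leq \|T_{\phi}\|_{(p_1, \dots, p_n)-mb}.
\]

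Second, I want to restrict this Schur multiplier to $H$. The key observation is that the transformation $\phi \mapsto \widetilde{\phi}$ is defined pointwise and purely in terms of group operations, so the restriction of $\widetilde{\phi}$ to $H^{\times (n+1)}$ coincides with $\widetilde{\phi|_{H^{\times n}}}$. Now $H$ (which I take to be closed, as is conventional) inherits a Haar measure $\mu_H$, and any finite subset $F \subseteq H$ lies in the support of $\mu_H$. Applying criterion (iii) of Theorem~\ref{Thm=Finite truncation} to $\widetilde{\phi}$ on $(G, \mu_G)$ gives that $\widetilde{\phi}|_{F^{\times(n+1)}}$ defines a Schur multiplier on $\ell_2(F)$ with $(p_1,\dots,p_n)$-mb norm bounded by $\|M_{\widetilde{\phi}}\|_{(p_1,\dots,p_n)-mb}$. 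Running criterion (iii) in the reverse direction on $(H, \mu_H)$, we conclude that $\widetilde{\phi|_{H^{\times n}}}$ defines a $(p_1, \dots, p_n)$-mb Schur multiplier on $H$ with
\[
    \|M_{\widetilde{\phi|_{H^{\times n}}}}\|_{(p_1, \dots, p_n)-mb} \leq \|M_{\widetilde{\phi}}\|_{(p_1, \dots, p_n)-mb}.
\]

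Third, since $H$ is amenable, Corollary~\ref{Cor=Schur to Fourier} applies (note that it does \emph{not} require first countability), yielding that $\phi|_{H^{\times n}}$ defines a $(p_1, \dots, p_n)$-mb Fourier multiplier on $H$ with
\[
    \|T_{\phi|_{H^{\times n}}}\|_{(p_1, \dots, p_n)-mb} \leq \|M_{\widetilde{\phi|_{H^{\times n}}}}\|_{(p_1, \dots, p_n)-mb}.
\]
Chaining the three inequalities gives the claimed bound.

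There is no serious obstacle, only a minor subtlety in Step 2: if $H$ is a null subgroup of $G$, one cannot directly restrict a kernel on $G \times G$ to $H \times H$ via a measure-theoretic argument. The finite-subset criterion of Theorem~\ref{Thm=Finite truncation} is precisely what avoids this issue, since finite subsets lie in the support of both $\mu_G$ (trivially, once we recall that all points of a locally compact group are in the support of Haar measure) and $\mu_H$. The continuity hypothesis $\phi \in C_b(G^{\times n})$ (inherited by $\phi|_{H^{\times n}}$, so that $\widetilde{\phi|_{H^{\times n}}} \in C_b(H^{\times(n+1)})$) is exactly what is needed to invoke Theorem~\ref{Thm=Finite truncation} on both sides.
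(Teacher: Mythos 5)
Your proposal is correct and follows essentially the same route as the paper: transfer $\phi$ to the Schur multiplier $M_{\widetilde{\phi}}$ on $G$ via Theorem \ref{Thm=FouriertoSchur}, restrict to $H$ through the finite-truncation criterion of Theorem \ref{Thm=Finite truncation} (noting $\widetilde{\phi}|_{H^{\times(n+1)}} = \widetilde{\phi|_{H^{\times n}}}$), and transfer back by Corollary \ref{Cor=Schur to Fourier} using amenability of $H$. The paper states this in two lines; your spelled-out version, including the remark about supports of Haar measures, is accurate.
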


\begin{proof}
    The associated inequality for Schur multipliers follows from Theorem \ref{Thm=Finite truncation}. Now Corollary \ref{Cor=Schur to Fourier} (using amenability of $H$) and Theorem \ref{Thm=FouriertoSchur} yield the result.
\end{proof}

In the next corollary we prove a necessary condition for a `Fourier multiplier' to satisfy \eqref{Eqn=AmenableIntertwining} for the embeddings $i_{p,\alpha}$ defined above. This was used in the discussion in Section \ref{Sect=FourierDef}.

\begin{corollary} \label{Cor=NecessaryCondition}
Fix $n > 1$, $1 \leq p_1, \dots, p_n, p, p' \leq \infty$ such that $\frac1p = \sum_{i=1}^n \frac1{p_i} = 1 - \frac1{p'}$ and let $\th_1, \dots, \th_n, \th, \th' \in [0,1]$. Let $i_{p,\alpha}$ be as in the proof of Theorem \ref{thm:amenable-intertwining}. Assume that for each $\phi \in L_\infty(G^{\times n})$, we have a map $S_\phi: \k^{\th_1}_{p_i}(L) \times \ldots \times \k^{\th_n}_{p_n}(L) \to \k^{\th}_p(L)$ satisfying 
\begin{equation}\label{Eqn=CorollaryAmenableIntertwining}
\left|\langle i_{p,\alpha}(S_\phi(x_1,\ldots,x_n)),i_{p',\alpha}(y) \rangle_{p,p'} -  \langle M_{\widetilde{\phi}}( i_{p_1,\alpha}(x_1),\ldots,i_{p_n,\alpha}(x_n) ) , i_{p',\alpha}(y) \rangle_{p,p'} \right| \stackrel{\alpha}\to 0.
\end{equation}
for $x_i \in \k_{p_i}^{\th_i}(L)$, $y \in \k_{p'}^{\th'}(L)$.
Now let $\phi(s_1, \dots, s_n) = \phi_1(s_1)\ldots \phi_n(s_n)$ for some functions $\phi_1, \dots, \phi_n \in L^\infty(G)$. Then $S_\phi$ must satisfy
\[
    S_{\phi}(x_1, \dots, x_n) = T_{\phi_1}(x_1) \ldots T_{\phi_n}(x_n)
\]
for $x_i \in \k^{\th_i}_{p_i}(L)$, $i = 1, \dots, n$.
\end{corollary}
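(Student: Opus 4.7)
The plan is to compare $S_\phi$ with the operator-theoretic product
\[
R(x_1, \dots, x_n) := T_{\phi_1}(x_1) T_{\phi_2}(x_2) \cdots T_{\phi_n}(x_n),
\]
showing that $R$ also satisfies \eqref{Eqn=CorollaryAmenableIntertwining}, and then invoking density together with the approximate isometry \eqref{Eqn=ApproxIsometry} from the proof of Corollary \ref{Cor=Schur to Fourier} to upgrade the approximate equality to a genuine equality.

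First I would perform a direct substitution. For $x_i = \Delta^{a_i} \lambda(f_i) \Delta^{b_i} \in \kappa_{p_i}^{\theta_i}(L)$, the linear Fourier multiplier \eqref{Eqn=FMforn=1} gives $T_{\phi_i}(x_i) = \Delta^{a_i} \lambda(\phi_i f_i) \Delta^{b_i}$, where $\phi_i f_i$ is bounded and compactly supported. Juxtaposing these and substituting the product symbol $\phi = \phi_1 \otimes \cdots \otimes \phi_n$ into \eqref{Eqn=Fourier def}, the integrand factors completely and one finds $R = T_\phi$ on the product of dense subsets. Iterating \eqref{Eqn=CommutationFormula2} also allows rewriting $R(x_1, \dots, x_n)$ in the form $\Delta^{a_1} \lambda(h) \Delta^{1/p - a_1}$ for a bounded compactly supported $h$ (a weighted convolution of the $\phi_i f_i$), so that $R(\mathbf{x})$ lies in $L_p(\cL G)$ by the standard Haagerup $L_p$-theory.

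Second, I would show that $R$ satisfies \eqref{Eqn=CorollaryAmenableIntertwining}. Since $R$ coincides with $T_\phi$ on the inputs considered, this follows from Theorem \ref{thm:amenable-intertwining} after observing that the theorem's proof does not actually require $\widetilde\phi$ to be a bounded Schur multiplier in the $(p_1, \dots, p_n)$-sense; the kernel computation of Lemma \ref{Lem=Fourier kernel} and the F\o lner estimate \eqref{Eqn=UltraIntertwining} only use that $\widetilde\phi$ is bounded, which is immediate from the boundedness of the $\phi_j$'s, and that this suffices for $M_{\widetilde\phi}$ to make sense on the Hilbert--Schmidt corners $i_{p_j, \alpha}(x_j)$.

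Finally, subtracting the intertwining identity for $S_\phi$ (the hypothesis) from the one for $R$ yields
\[
\bigl| \langle i_{p,\alpha}(S_\phi(\mathbf{x}) - R(\mathbf{x})), i_{p',\alpha}(y) \rangle_{p,p'} \bigr| \to 0
\]
for every $y \in \kappa^{\theta'}_{p'}(L)$. Since both $S_\phi(\mathbf{x}) \in \kappa^\theta_p(L) \subset L_p(\cL G)$ and $R(\mathbf{x}) \in L_p(\cL G)$, the approximate isometry \eqref{Eqn=ApproxIsometry} applies to their difference and forces $\langle S_\phi(\mathbf{x}) - R(\mathbf{x}), y \rangle_{p,p'} = 0$ for all such $y$; by \eqref{Eqn=EqualEmbeddings} and the fact that $\kappa^0_{p'}(L)$ is norm dense in $L_{p'}(\cL G)$, we conclude $S_\phi(\mathbf{x}) = R(\mathbf{x}) = T_{\phi_1}(x_1) \cdots T_{\phi_n}(x_n)$. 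The main obstacle is the second step: one must carefully retrace the proof of Theorem \ref{thm:amenable-intertwining} to verify that no step essentially uses the $(p_1, \dots, p_n)$-Schur multiplier hypothesis beyond making the $M_{\widetilde\phi}$-pairing meaningful, but no genuinely new ideas are required, only careful bookkeeping.
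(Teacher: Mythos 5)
Your proposal is correct and follows the same overall architecture as the paper's proof: compare $S_\phi$ with the product $R(x_1,\dots,x_n)=T_{\phi_1}(x_1)\cdots T_{\phi_n}(x_n)$, establish an approximate intertwining for $R$ against $M_{\widetilde{\phi}}$, subtract it from the hypothesis \eqref{Eqn=CorollaryAmenableIntertwining}, and conclude via \eqref{Eqn=ApproxIsometry} and density of $\k^{\th'}_{p'}(L)$. The one step where you diverge is how the intertwining for $R$ is obtained. You rerun the proof of Theorem \ref{thm:amenable-intertwining} with the product symbol $\phi=\phi_1\otimes\cdots\otimes\phi_n$ and the original inputs $x_j$, identifying $R$ with $T_\phi$ via \eqref{Eqn=FourierProducts}; this forces you to check, as you correctly note, that the theorem's proof never uses the $(p_1,\dots,p_n)$-Schur-multiplier hypothesis on $\widetilde{\phi}$ but only $\phi\in L_\infty$ and the fact that all pairings are taken between $S_2$-kernels. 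The paper instead reruns the same proof with the constant symbol $\phi\equiv 1$ and the modified inputs $T_{\phi_j}(x_j)$, and then uses the exact kernel identity $i_{p_j,\alpha}(T_{\phi_j}(x_j))=M_{\widetilde{\phi_j}}(i_{p_j,\alpha}(x_j))$ together with the factorization $M_{\widetilde{\phi}}=M_{\widetilde{\phi_1}}\cdots M_{\widetilde{\phi_n}}$; the point to check there is that the proof tolerates inputs $\Delta^{a}\lambda(g)\Delta^{b}$ with $g$ bounded and compactly supported rather than in $C_c(G)\star C_c(G)$. Both variants step slightly outside the literal hypotheses of Theorem \ref{thm:amenable-intertwining} and both checks are routine; your route keeps the inputs in $\k_{p_j}^{\th_j}(L)$ at the price of a symbol that is merely bounded (harmless, since $M_{\widetilde{\phi}}$ is always defined on Hilbert--Schmidt corners). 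The only soft spot, which the paper shares and glosses over equally, is the implicit claim that $R(\mathbf{x})$ lies in $L_p(\cL G)$ so that \eqref{Eqn=ApproxIsometry} applies to the difference $S_\phi(\mathbf{x})-R(\mathbf{x})$; also note the spaces here are Connes--Hilsum, not Haagerup, $L_p$-spaces.
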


\begin{proof}
Fix some $y \in \k^{\th'}_{p'}(L)$. By density, it suffices to show that
\[
    \la S_\phi(x_1, \dots, x_n), y\ra =  \la T_{\phi_1}(x_1) \ldots T_{\phi_n}(x_n), y\ra.
\]
By \eqref{Eqn=ApproxIsometry}, it suffices to show
\[
    \lim_{\alpha \in I} |\la i_{p,\alpha}(S_\phi(x_1, \dots, x_n) - T_{\phi_1}(x_1) \ldots T_{\phi_n}(x_n)), i_{p',\alpha}(y)\ra| = 0.
\]
By running the proof of Theorem \ref{thm:amenable-intertwining} with the constant 1 function in place of $\phi$ and $T_{\phi_i}(x_i)$ in place of $x_i$, we find that
\[
    \lim_{\alpha \in I} |\la i_{p,\alpha}(T_{\phi_1}(x_1) \ldots T_{\phi_n}(x_n)) - i_{p_1, \alpha}(T_{\phi_1}(x_1)) \ldots i_{p_n, \alpha}(T_{\phi_n}(x_n)), i_{p',\alpha}(y) \ra| = 0.
\]
Since multiplication with $\phi_i$ only maps $C_c(G) \star C_c(G)$ to $C_c(G)$, we no longer need to have that $T_{\phi_i}(x_i) \in \k_{p_i}^{\th_i}(L)$, so we cannot apply Theorem \ref{thm:amenable-intertwining} directly. But since we have $\phi = 1$, this does not give any technical complications in the proof. \\

\noindent Using the kernel representations, it is straightforward to show that
\[
\begin{split}
    i_{p_1, \alpha}(T_{\phi_1}(x_1)) \ldots i_{p_n, \alpha}(T_{\phi_n}(x_n)) &= M_{\widetilde{\phi_1}}(i_{p_1, \alpha}(x_1)) \ldots M_{\widetilde{\phi_n}}(i_{p_n,\alpha}(x_n)) \\
    &= M_{\widetilde{\phi}}(i_{p_1, \alpha}(x_1), \dots, i_{p_n,\alpha}(x_n)).
\end{split}
\]
By combining the above observations with \eqref{Eqn=CorollaryAmenableIntertwining}, we get the required result.
\end{proof}

\section{Linear intertwining result} \label{Sect=Intertwining}

In this section, we sketch the proof of Proposition \ref{Prop=Blackbox}. The main ingredient to be added to already existing results is the extension of \cite[Lemma 3.1]{CPR18} to general von Neumann algebras via Haagerup reduction. The Haagerup reduction method is described by Theorem \ref{Thm=HaagerupReduction}, proved for $\s$-finite von Neumann algebras in \cite{HaagerupReduction} and extended to the weight case in \cite[Section 8]{CPPR}. We will assume that the reader is familiar with Tomita-Takesaki theory, conditional expectations and such. We refer to \cite{Takesaki2} for the background.\\

Denote by $\s^\vphi$ the modular automorphism group of a normal faithful semifinite (nfs) weight $\vphi$. Recall that the centraliser $\cN_\vphi$ of a nfs weight $\vphi$ on a von Neumann algebra $\cM$ is given by 
\[ \cN_\vphi = \{x \in \cM: \s_t^\vphi(x) = x\ \forall t \in \R\}. \]

\begin{theorem} \label{Thm=HaagerupReduction}
Let $(\cM, \vphi)$ be any von Neumann algebra equipped with a nfs weight. There is another von Neumann algebra $(\cR, \widehat{\vphi})$ containing $\cM$ and with nfs weight $\widehat{\vphi}$ extending $\vphi$, and elements $a_n$ in the center of the centralizer of $\widehat{\vphi}$ such that the following properties hold:
\begin{enumerate}
    \item There is a conditional expectation $\cE: \cR \to \cM$ satisfying
    \[ \vphi \circ \cE = \widehat{\vphi}, \qquad \s_s^{\vphi} \circ \cE = \cE \circ \s_s^{\widehat{\vphi}}, \quad s \in \R.\]
    \item The centralisers $\cR_n$ of the weights $\vphi_n := \vphi(e^{-a_n}\: \cdot\: )$ are semifinite for $n \geq 1$.
    \item There are conditional expectations $\cE_n: \cR \to \cR_n$ satisfying
    \[ \widehat{\vphi} \circ \cE_n = \widehat{\vphi}, \qquad \s_s^{\widehat{\vphi}} \circ \cE_n = \cE_n \circ \s_s^{\widehat{\vphi}}, \quad s \in \R\]
    \item $\cE_n(x) \to x$ $\s$-strongly for $x \in \fn_{\widehat\vphi}$, and $\bigcup_{n \geq 1} \cR_n$ is $\s$-strongly dense in $\cR$. 
\end{enumerate}
\end{theorem}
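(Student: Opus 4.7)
The strategy is Haagerup's original crossed-product construction, extended from states to nfs weights as in \cite{CPPR}. The plan is to realise $\cR$ as a discrete crossed product of $\cM$ by a countable dense subgroup $G \subset \R$ acting via the modular flow $\sigma^{\vphi}$, and to exhibit each $\cR_n$ as the centraliser of a perturbation of $\widehat{\vphi}$ whose modular group is $2^{-n}$-periodic on $\cR$.

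First, take $G = \bigcup_{n \geq 1} 2^{-n}\mathbb{Z}$, which is countable and discrete but dense in $\R$, and form $\cR = \cM \rtimes_{\sigma^{\vphi}|_G} G$. By construction $\cR$ is generated by $\cM$ together with unitaries $\lambda(t)$, $t \in G$, satisfying $\lambda(t)x\lambda(t)^* = \sigma_t^{\vphi}(x)$ for $x \in \cM$. Discreteness of $G$ supplies a canonical normal faithful conditional expectation $\cE \colon \cR \to \cM$, and $\widehat{\vphi} := \vphi \circ \cE$ is an nfs weight extending $\vphi$. A direct calculation, using that $\vphi$ is invariant under its own modular flow, yields $\sigma_s^{\widehat{\vphi}}|_{\cM} = \sigma_s^{\vphi}$ and $\sigma_s^{\widehat{\vphi}}(\lambda(t)) = \lambda(t)$ for all $s,t$. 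This gives property~(1) and places the abelian subalgebra $\cA$ generated by $\{\lambda(t): t \in G\}$ inside the centraliser of $\widehat{\vphi}$.

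Next, set $u_n := \lambda(2^{-n})$ and define $a_n \in \cA$ by functional calculus as the unique self-adjoint element with spectrum in $2^n[-\pi,\pi)$ satisfying $e^{i 2^{-n} a_n} = u_n$; equivalently $e^{it a_n} = \lambda(t)$ for $t \in 2^{-n}\mathbb{Z}$. Since $a_n$ commutes with $\cA$ and with $\cM_{\vphi} := \{x \in \cM : \sigma_t^{\vphi}(x) = x\}$ (as $u_n x u_n^* = \sigma_{2^{-n}}^{\vphi}(x) = x$ for $x \in \cM_\vphi$), and since the centraliser of $\widehat{\vphi}$ is generated by $\cM_{\vphi}$ and $\cA$, we conclude that $a_n$ lies in its centre. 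Setting $\vphi_n := \widehat{\vphi}(e^{-a_n}\cdot)$, Connes' cocycle formula gives $(D\vphi_n : D\widehat{\vphi})_t = e^{-it a_n}$ and hence $\sigma_t^{\vphi_n}(x) = e^{-it a_n}\sigma_t^{\widehat{\vphi}}(x) e^{it a_n}$. Evaluating at $t = 2^{-n}$ and using $e^{-i 2^{-n} a_n} = u_n^*$ shows $\sigma_{2^{-n}}^{\vphi_n}$ acts trivially on both $\cM$ (by the cancellation $u_n^* \sigma_{2^{-n}}^{\vphi}(x) u_n = x$) and on each $\lambda(s)$ (using abelianness of $G$), so $\sigma^{\vphi_n}$ factors through $\R/2^{-n}\mathbb{Z} \cong \mathbb{T}$. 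The standard discrete decomposition theorem for weights with periodic modular group yields semifiniteness of the centraliser $\cR_n$ of $\vphi_n$, giving~(2). For~(3), take the averaging $\cE_n(x) := 2^n \int_0^{2^{-n}} \sigma_t^{\vphi_n}(x)\, dt$; the required $\widehat{\vphi}$-invariance and the commutation $\cE_n \circ \sigma_s^{\widehat{\vphi}} = \sigma_s^{\widehat{\vphi}} \circ \cE_n$ follow from the identity $[\sigma^{\widehat{\vphi}}, \sigma^{\vphi_n}] = 0$, itself a consequence of $a_n$ lying in the centre of the centraliser.

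Since $2^{-n}\mathbb{Z} \subset 2^{-n-1}\mathbb{Z}$ the periods nest and $\cR_n \subseteq \cR_{n+1}$. Density of $\bigcup_n \cR_n$ in~(4) then reduces, by a standard approximation argument on $\mathfrak{n}_{\widehat{\vphi}}$, to $\sigma$-strong convergence $\cE_n(x) \to x$ for $x$ analytic for $\sigma^{\widehat{\vphi}}$; this in turn follows from uniform continuity of $t \mapsto \sigma_t^{\vphi_n}(x)$ on the shrinking intervals $[0,2^{-n}]$ together with the uniform bound $\|\sigma_t^{\vphi_n}(x)\| \leq \|x\|$. The main technical obstacle is exactly the adaptation from the $\sigma$-finite state setting of \cite{HaagerupReduction} to nfs weights: one must check that the unbounded perturbation $e^{-a_n}$ keeps $\vphi_n$ an nfs weight, cope with the non-$\sigma$-finiteness of $\cR$ when working with the $\sigma$-strong topology on $\mathfrak{n}_{\widehat{\vphi}}$, and verify that the spatial-derivative manipulations entering the definition of $\vphi_n$ are valid. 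These refinements are carried out in detail in \cite{CPPR}.
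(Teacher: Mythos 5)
First, a point of comparison: the paper does not prove Theorem \ref{Thm=HaagerupReduction} at all — it quotes it from \cite{HaagerupReduction} ($\sigma$-finite case) and \cite[Section 8]{CPPR} (weight case). Your sketch follows exactly the construction of those references: $\cR=\cM\rtimes_{\sigma^{\vphi}}G$ with $G=\bigcup_n 2^{-n}\mathbb{Z}$ discrete, $\widehat\vphi=\vphi\circ\cE$, $a_n$ a bounded logarithm of $\lambda(2^{-n})$, $\vphi_n=\widehat\vphi(e^{-a_n}\cdot)$ with $2^{-n}$-periodic modular group, and $\cE_n$ the average over one period. Items (1)--(3) are essentially right, but one step is justified by a claim you have not proved and do not need: you place $a_n$ in the centre of the centraliser $\cN_{\widehat\vphi}$ by asserting that $\cN_{\widehat\vphi}$ is generated by $\cM_\vphi$ and $\cA$; that generation statement requires a Fourier-coefficient argument and is beside the point. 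The standard argument is shorter: for $t\in G$ the automorphisms $\sigma_t^{\widehat\vphi}$ and $\mathrm{Ad}(\lambda(t))$ agree on $\cM$ and on $\lambda(G)$, hence on $\cR$, so every element of $\cN_{\widehat\vphi}$ commutes with $\lambda(2^{-n})$ and therefore with $a_n$. Also, $e^{-a_n}$ is \emph{bounded} (the spectrum of $a_n$ lies in an interval of length about $2^{n+1}\pi$), so the "unbounded perturbation" worry at the end is misplaced; the genuine weight-case difficulties lie elsewhere (semifiniteness/density issues around $\fn_{\widehat\vphi}$ and the $L_p$-level statements).

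The genuine gap is item (4), which is the technical heart of the theorem. Your justification — uniform continuity of $t\mapsto\sigma_t^{\vphi_n}(x)$ on the shrinking intervals $[0,2^{-n}]$ together with $\|\sigma_t^{\vphi_n}(x)\|\le\|x\|$ — does not work, because the modulus of continuity of $\sigma^{\vphi_n}$ is not uniform in $n$. Indeed $\sigma_t^{\vphi_n}=\mathrm{Ad}(e^{-ita_n})\circ\sigma_t^{\widehat\vphi}$ with $\|a_n\|$ of order $2^n$, so writing $a_n=2^nb_n$, the conjugating unitaries over $t\in[0,2^{-n}]$ sweep through $e^{-isb_n}$, $s\in[0,1]$, which are nowhere near $1$; moreover $\widehat\vphi(\lambda(k2^{-n}))=\delta_{k,0}$, so $\lambda(2^{-n})$, and hence $b_n$, is in no cheap sense asymptotically central. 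What rescues the construction is that $\mathrm{Ad}(\lambda(2^{-n}))(x)=\sigma^{\vphi}_{2^{-n}}(x)\to x$, i.e.\ $\lambda(2^{-n})$ asymptotically commutes with $x$, and one must transfer this to the functional calculus element $b_n$ (the logarithm is discontinuous on the circle, so this is not automatic); this is exactly the delicate estimate carried out in \cite[Section 2]{HaagerupReduction} and, in the weight setting with $\fn_{\widehat\vphi}$-approximations, in \cite[Section 8]{CPPR}. As written, the convergence $\cE_n(x)\to x$, and with it the $\sigma$-strong density of $\bigcup_n\cR_n$, is not established. (A smaller remark: the nesting $\cR_n\subseteq\cR_{n+1}$ you invoke is not immediate from $2^{-n}\mathbb{Z}\subset2^{-n-1}\mathbb{Z}$ alone, since $\cR_n$ is the centraliser of $\vphi_n$, not of $\widehat\vphi$; fortunately the statement does not need it once $\cE_n(x)\to x$ is known.)
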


We denote by $D_\vphi$ the spatial derivative with respect to $\vphi$ (and some weight on the commutant, whose choice is unimportant). Assume that $T: \cM \to \cM$ is unital completely positive (ucp) and satisfies $\vphi \circ T \leq \vphi$. Then by \cite[Section 5]{HaagerupReduction}, $T$ `extends' to a map $T^{(p)}$ on $L_p(\cM)$, in the sense that $T^{(p)}(D_\vphi^{1/2p} x D_\vphi^{1/2p}) = D_\vphi^{1/2p} T(x) D_\vphi^{1/2p}$ for $x \in \fm_{\widehat\vphi}$. If $T$ satisfies $\s_s^\vphi \circ T = T \circ \s_s^\vphi$, then we moreover have $T^{(p)}(D_\vphi^{\theta/p} x D_\vphi^{(1-\theta)/p}) = D_\vphi^{\theta/p} T(x) D_\vphi^{(1-\theta)/p}$ for any $0 \leq \theta \leq 1$ and $x \in \fm_{\widehat\vphi}$. \\

\noindent In particular, the conditional expectations $\cE, \cE_n$ `extend' to maps $\cE^{(p)}, \cE_n^{(p)}$ from $L_p(\cR, \widehat{\vphi})$ to $L_p(\cM, \vphi)$ resp. $L_p(\cR_n, \widehat{\vphi})$. The following statement is \cite[Lemma 8.3]{CPPR}:
\begin{equation}\label{Eqn=HaagerupReduction}
    \lim_{n \to \infty}\|\cE_n^{(p)}(x) - x\|_p = 0, \qquad 1 \leq p < \infty,\ x \in L_p(\cR, \hat{\vphi}).
\end{equation}
\noindent We need a few more facts; we refer to \cite[Section 8.2]{CPPR} for the details. First, there is an isometric isomorphism $\k_p: L_p(\cR_n, \widehat{\vphi}) \to L_p(\cR_n, \vphi_n)$ given by $\k_p(D_{\widehat{\vphi}}^{1/2p}xD_{\widehat\vphi}^{1/2p}) = e^{a_n/2p} x e^{a_n/2p}$ for $x \in \fm_{\widehat\vphi}$. Next, assume that $T: \cM \to \cM$ is ucp and preserves $\vphi$ and $\s_s^\vphi$. Then by \cite[Section 4]{HaagerupReduction} there exists an extension $\widehat{T}: \cR \to \cR$ which is also ucp and preserves $\widehat{\vphi}$ and $\s_s^{\widehat\vphi}$. Hence $\widehat{T}$ itself also `extends' to the various noncommutative $L_p$-spaces. Moreover, the following diagram commutes:
\[
\begin{tikzcd}
L_p(\cR, \widehat\vphi) \arrow{r}{\cE_n^{(p)}} & L_p(\cR_n, \widehat\vphi) \arrow{r}{\k_p} & L_p(\cR_n, \vphi_n)\\
L_p(\cR, \widehat\vphi) \arrow{u}{\widehat{T}^{(p)}} \arrow{r}{\cE_n^{(p)}} & L_p(\cR_n, \widehat\vphi) \arrow{u}{\widehat{T}^{(p)}} \arrow{r}{\k_p} & L_p(\cR_n, \vphi_n) \arrow{u}{\widehat{T}}.
\end{tikzcd}
\]
Note that since $\vphi_n$ is tracial on $\cR_n$, the $\widehat{T}$ in the rightmost upwards arrow is actually an extension of the operator $\widehat{T}$ on $\cR_n$ so we do not need to use the notation $\widehat{T}^{(p)}$ here.\\

Finally, for $1 \leq p,q < \infty$ we define the Mazur maps $M_{p,q}: L_p(\cM) \to L_q(\cM)$ by $x \mapsto u |x|^{p/q}$ where $x = u|x|$ is the polar decomposition of $x$. The Mazur maps satisfy $\k_q \circ M_{p,q} = M_{p,q} \circ \k_p$, see for instance \cite[end of Section 3]{RicardMazur}. We are now ready to state and prove the generalisation of \cite[Lemma 3.1]{CPR18} for general von Neumann algebras. This result was already shown for $2 < p < \infty$ in \cite[Section 8]{CPPR}, but we will prove the result for all $1 < p < \infty$ at once since this does not take any extra effort.

\begin{lemma} \label{Lem=AlmostMultMaps}
Let $(\cM, \vphi)$ be a von Neumann algebra equipped with nfs weight. Let $T: \cM \to \cM$ be a unital completely positive map satisfying $\vphi \circ T = \vphi$ and $T \circ \s_s^\vphi = \s_s^\vphi \circ T$ for all $s \in \R$. Then there exists a universal constant $C>0$ such that for any $x \in L_2(\cM)$ and $1 < p < \infty$:
\[
    \|T^{(p)}(M_{2,p}(x)) - M_{2,p}(x)\|_p \leq C \|T^{(2)}(x) - x\|_2^\theta \|x\|_2^{1-\theta},
\]
where $\theta = \frac14 \min\{\frac p2, \frac2p\}$.
\end{lemma}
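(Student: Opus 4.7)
The plan is to reduce to the semifinite (in fact tracial) case via Haagerup reduction, mirroring the strategy of \cite[Section 8]{CPPR} (which already covered $2 < p < \infty$) but applied uniformly in $1 < p < \infty$. The key input is the tracial version of the inequality, namely \cite[Lemma 3.1]{CPR18}, whose output we then transport through the reduction machinery back to $(\cM, \vphi)$.

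First I would extend $T$ to $\widehat{T}: \cR \to \cR$, which by \cite[Section 4]{HaagerupReduction} remains unital completely positive, preserves $\widehat{\vphi}$, and commutes with $\s_s^{\widehat{\vphi}}$. Viewing $x \in L_2(\cM, \vphi)$ isometrically inside $L_2(\cR, \widehat{\vphi})$, I set $x_n = \cE_n^{(2)}(x) \in L_2(\cR_n, \widehat{\vphi})$, so that $x_n \to x$ in $L_2$-norm by \eqref{Eqn=HaagerupReduction}. Next I transport $x_n$ to the semifinite world via the isometric isomorphism $\k_2: L_2(\cR_n, \widehat{\vphi}) \to L_2(\cR_n, \vphi_n)$. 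The commutative diagram recorded before the lemma shows that $\widehat{T}^{(p)}$ intertwines with $\k_p$, and the identity $\k_q \circ M_{p,q} = M_{p,q} \circ \k_p$ recalled from \cite{RicardMazur} shows that the Mazur maps intertwine as well.

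Because $\vphi_n$ is tracial on $\cR_n$, the tracial lemma \cite[Lemma 3.1]{CPR18} applies to $\k_2(x_n) \in L_2(\cR_n, \vphi_n)$ with the map $\widehat{T}$. Pulling the resulting inequality back through the isometry $\k_p^{-1}$ and using the intertwining relations gives
\[
    \|\widehat{T}^{(p)}(M_{2,p}(x_n)) - M_{2,p}(x_n)\|_p \leq C \|\widehat{T}^{(2)}(x_n) - x_n\|_2^\theta \|x_n\|_2^{1-\theta}.
\]
I would then pass to the limit $n \to \infty$. Contractivity of $\widehat{T}^{(2)}$ on $L_2(\cR, \widehat{\vphi})$ yields $\widehat{T}^{(2)}(x_n) - x_n \to \widehat{T}^{(2)}(x) - x$ in $L_2$-norm, while uniform continuity of the Mazur map on $L_2$-bounded sets combined with contractivity of $\widehat{T}^{(p)}$ gives convergence of the left-hand side in $L_p$-norm. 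Since the restriction of $\widehat{T}^{(q)}$ to $L_q(\cM)$ is $T^{(q)}$ (as $\widehat{T}|_\cM = T$ and $\widehat{T}$ preserves $\widehat{\vphi}$), and since $M_{2,p}(x) \in L_p(\cM)$ whenever $x \in L_2(\cM)$, the inequality descends to the original setting.

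The main obstacle is not analytical but rather the careful bookkeeping: verifying that $\cE_n^{(p)}$, $\k_p$, $M_{2,p}$ and $\widehat{T}^{(p)}$ all intertwine consistently on each slot, and that the restrictions between $L_p(\cR_n, \widehat{\vphi})$, $L_p(\cR_n, \vphi_n)$, $L_p(\cR, \widehat{\vphi})$ and $L_p(\cM, \vphi)$ really behave as the commutative diagram suggests. Once this is in place, the rest is a straightforward continuity argument identical in form to the one carried out for $p > 2$ in \cite[Section 8]{CPPR}.
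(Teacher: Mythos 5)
Your proposal is correct and follows essentially the same route as the paper: Haagerup reduction to the tracial algebras $(\cR_n,\vphi_n)$, the semifinite estimate of \cite[Claim B]{CPPR}/\cite[Lemma 3.1]{CPR18} transported through $\k_p$, and Ricard's H\"older continuity of the Mazur maps to pass to the limit in $n$. The only (immaterial) difference is the order of operations — you apply $\cE_n$ at the $L_2$-level to $x$ and then the Mazur map, while the paper applies $\cE_n^{(p)}$ to $y=M_{2,p}(x)$ and uses $M_{p,2}$ to return to $L_2$ — and you should note, as the paper does, that \cite[Lemma 3.1]{CPR18} is stated for finite algebras (and for the range $p<2$; the $p>2$ case is \cite[Claim B]{CPPR}) but its proof carries over to the semifinite setting.
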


\begin{proof}
The proof runs via Haagerup reduction, using the estimates for the semifinite case from \cite[Claim B]{CPPR} for $p > 2$ and \cite[Lemma 3.1]{CPR18} for $p < 2$. Note that the latter was stated only for finite von Neumann algebras, but the same proof works for the semifinite case as well. \\

Set $y = M_{2,p}(x)$. Since $T = \widehat{T}$ on $\cM$ and $L_p(\cM, \vphi) \hookrightarrow L_p(\cR, \widehat\vphi)$ canonically and isometrically, we have $T^{(p)}(y) = \widehat{T}^{(p)}(y)$ and 
\[ \|T^{(p)}(y) - y\|_{L_p(\cM, \vphi)} = \|\widehat{T}^{(p)}(y) - y\|_{L_p(\cR, \widehat\vphi)}.\]
Now fix $n \geq 1$. Then
\[
\begin{split}
    \|\cE_n^{(p)}(\widehat{T}^{(p)}(y)) - \cE_n^{(p)}(y)\|_{L_p(\cR_n, \widehat\vphi)} &= \|\k_p\left(\cE_n^{(p)}(\widehat{T}^{(p)}(y)) - \cE_n^{(p)}(y)\right)\|_{L_p(\cR_n, \vphi_n)} \\
    &= \|\widehat{T}(\k_p(\cE_n^{(p)}(y))) - \k_p(\cE_n^{(p)}(y))\|_{L_p(\cR_n, \vphi_n)}.
\end{split}
\]
Now we can apply the result for the semifinite case on $\k_p(\cE_n^{(p)}(y))$ to obtain
\[
\begin{split}
    &\|\cE_n^{(p)}(\widehat{T}^{(p)}(y)) - \cE_n^{(p)}(y)\|_{L_p(\cR_n, \widehat\vphi)} \\
    \leq\ & C \|\hat{T}(M_{p,2}(\k_p(\cE_n^{(p)}(y)))) - M_{p,2}(\k_p(\cE_n^{(p)}(y)))\|_{L_2(\cR_n, \vphi_n)}^\theta \cdot \|M_{p,2}(\k_p(\cE_n^{(p)}(y)))\|_{L_2(\cR_n, \vphi_n)}^{1-\theta} \\
    =\ & C \|\k_2(\hat{T}^{(2)}(M_{p,2}(\cE_n^{(p)}(y)))) - \k_2(M_{p,2}(\cE_n^{(p)}(y)))\|_{L_2(\cR_n, \vphi_n)}^\theta \cdot \|\k_2(M_{p,2}(\cE_n^{(p)}(y)))\|_{L_2(\cR_n, \vphi_n)}^{1-\theta} \\
    =\ & C \|\hat{T}^{(2)}(M_{p,2}(\cE_n^{(p)}(y))) - M_{p,2}(\cE_n^{(p)}(y))\|_{L_2(\cR_n, \widehat\vphi)}^\theta \cdot \|M_{p,2}(\cE_n^{(p)}(y))\|_{L_2(\cR_n, \widehat\vphi)}^{1-\theta} \\
    =:\ & C A_n^\theta B_n^{1-\theta}.\\
\end{split}
\]
By the triangle inequality, the main result from \cite{RicardMazur} and \eqref{Eqn=HaagerupReduction}, we find
\[
\begin{split}
    B_n &\leq \|M_{p,2}(\cE_n^{(p)}(y)) - M_{p,2}(y)\|_{L_2(\cR_n, \widehat\vphi)} + \|M_{p,2}(y)\|_{L_2(\cR_n, \widehat\vphi)} \\
    &\leq C_{x,p} \|\cE_n^{(p)}(y) - y\|_{L_p(\cR_n, \widehat\vphi)}^{\min\{\frac p2, 1\}} + \|x\|_{L_2(\cR_n, \widehat\vphi)} \to \|x\|_{L_2(\cM, \vphi)}
\end{split}
\]
for some constant $C_{x,p}$ independent of $n$. Similarly, we find
\[
A_n \leq C_{x,p}\|\widehat{T}^{(2)} - 1_{\cR}\| \|\cE_n^{(p)}(y) - y\|_{L_p(\cR_n, \widehat\vphi)}^{\min\{\frac p2, 1\}} + \|\widehat{T}^{(2)}(x) - x\|_{L_2(\cR_n, \widehat\vphi)} \to \|T^{(2)}(x) - x\|_{L_2(\cM, \vphi)}.
\]
Hence, taking limits and applying again \eqref{Eqn=HaagerupReduction}, we conclude
\[
    \|T^{(p)}(y) - y\|_p = \lim_{n \to \infty} \|\cE_n^{(p)}(\widehat{T}^{(p)}(y)) - \cE_n^{(p)}(y)\|_{L_p(\cR_n, \widehat\vphi)} \leq C \|T^{(2)}(x) - x\|_2^\theta \|x\|_2^{1-\theta}.
\]
\end{proof}

\begin{proof}[Proof of Proposition \ref{Prop=Blackbox}]
We indicate only the changes to \cite[Proof of Claim B]{CPPR}. The statement we have to prove is precisely \cite[Equation (9)]{CPPR}, but without the $u_j$ (this is just a different choice based on convenience).
The $T_\zeta$ constructed in \cite[Proof of Claim B]{CPPR} is a $\vphi$-preserving ucp map that commutes with the modular automorphism group; one can see this from \eqref{Eqn=CommutationFormula2}. Hence, we can apply Lemma \ref{Lem=AlmostMultMaps} on $T_\zeta$ and $h_V$ to show \cite[Equation (10)]{CPPR} (but without the $u_j$). Then, setting $z_j = h_V^{2/q}$, the rest of the proof is the same. 
\end{proof}

\textbf{Acknowledgement.} The author thanks Martijn Caspers for useful discussions and a thorough proofreading of the manuscript.


\begin{thebibliography}{DLMV20a}
  

\bibitem[BoFe84]{BozejkoFendler}
    M. Bozejko, G. Fendler,
    \emph{Herz-Schur multipliers and completely bounded multipliers of the Fourier algebra of a locally compact group} (English, with Italian summary),
    Boll. Un. Mat. Ital. A (6) {\bf 3} (1984), no. 2, 297--302

\bibitem[Cas13]{Cas13}
    M. Caspers,
    \emph{The {$L^p$}-{F}ourier transform on locally compact quantum groups},
    J. Operator Theory, {\bf 69} (2013), no. 1, 161--193.

\bibitem[CaSa15]{CaspersDeLaSalle}
    M. Caspers, M. de la Salle,
    \emph{Schur and Fourier multipliers of an amenable group acting on non-commutative $L^p$-spaces},
    Trans. Amer. Math. Soc. {\bf 367} (2015), no. 10, 6997--7013.

\bibitem[CGPT]{CGPT}
    J. Conde-Alonso, A. Gonz\'alez-P\'erez, J. Parcet, E. Tablate,
    \emph{Schur multipliers in Schatten-von Neumann classes},
    to appear in Annals of Mathematics. arXiv: 2201.05511v2.

\bibitem[CJKM]{CJKM}
    M. Caspers, B. Janssens, A. Krishnaswamy-Usha, L. Miaskiwskyi,
    \emph{Local and multilinear noncommutative De Leeuw theorems},
    arXiv: 2201.10400v2.

\bibitem[CKV]{CKV}
    M. Caspers, A. Krishnaswamy-Usha, G. Vos,
    \emph{Multilinear transference of Fourier and Schur multipliers acting on non-commutative $L_p$-spaces},
    Canadian Journal of Mathematics; published online by Cambridge University Press: 03 November 2022, pp. 1-18. doi:10.4153/S0008414X2200058X.

\bibitem[Con80]{ConnesSpatial}
    A. Connes,
    \emph{On the spatial theory of von Neumann algebras},
    J. Funct. Anal. {\bf 35} (1980), no. 2, 153--164.

\bibitem[CPPR15]{CPPR}
    M. Caspers, J. Parcet, M. Perrin, \'E. Ricard,
    \emph{Noncommutative de Leeuw theorems},
    Forum Math. Sigma {\bf 3} (2015), Paper No. e21, 59 pp.
    
\bibitem[CPR18]{CPR18}
    J. Conde-Alonso, J. Parcet, \'E. Ricard,
    \emph{On spectral gaps of Markov maps},
    Israel J. Math. {\bf 226} (2018), no. 1, 189--203.

 \bibitem[Gon18]{gonzalez2018crossed}
    A. Gonzalez-Perez,
    \emph{Crossed-products extensions, of $L_p$-bounds for amenable actions},
   J. Functional Analysis {\bf 274:10} (2018), 2846--2883.

\bibitem[Hil81]{Hilsum}
    M. Hilsum,
    \emph{Les espaces $L^p$ d'une algebre de von Neumann d\'efinies par la deriv\'ee spatiale},
    J. Funct. Anal. {\bf 40} (1981), no. 2, 151--169.

\bibitem[HJX10]{HaagerupReduction}
    U. Haagerup, M. Junge, and Q. Xu,
    \emph{A reduction method for noncommutative {$L_p$}-spaces and applications},
    Trans. Amer. Math. Soc., {\bf 362} (2010), no. 4, 2125--2165.

\bibitem[Jol92]{Jolissaint}
    P. Jolissaint,
    \emph{A characterization of completely bounded multipliers of Fourier algebras},
    Colloq. Math. {\bf 63} (1992), no. 2, 311--313.

\bibitem[JTT09]{JTT}
    K. Juschenko, I.G. Todorov, L. Turowska,
    \emph{Multidimensional operator multipliers},
    Trans. Am. Math. Soc. (9) {\bf 361} (2009), 4683--4720
   
\bibitem[JuSh05]{JungeSherman}
    M. Junge and D. Sherman,
    {\em Noncommutative {$L^p$} modules},
    J. Operator Theory {\bf53} (2005), no. 1, 3--34.

\bibitem[KaLa18]{KaniuthLau}
    E. Kaniuth, A. Lau,
    \emph{Fourier and Fourier-Stieltjes algebras on locally compact groups},
    Mathematical Surveys and Monographs. American Mathematical Society, 2018.

\bibitem[LaSa11]{LafforgueDeLaSalle}
  V. Lafforgue, M. de la Salle,
  \emph{Noncommutative $L_p$-spaces without the completely bounded approximation property},
   Duke Math. J. {\bf 160} (2011), no. 1, 71--116.

\bibitem[NeRi11]{NeuwirthRicard}
   S. Neuwirth, \'E Ricard,
   \emph{Transfer of Fourier multipliers into Schur multipliers and sumsets in a discrete group},
   Canad. J. Math. {\bf 63} (2011), no. 5, 1161--1187.

\bibitem[Pat88]{Paterson}
    A. Paterson, 
    \emph{Amenability},
    Mathematical Surveys and Monographs. American Mathematical Society, 1988

\bibitem[Pis98]{Pisier98}
   G. Pisier,
   \emph{Non-commutative vector valued $L_p$-spaces and 
   completely $p$-summing maps},
   Ast\'erisque {\bf 247} (1998), vi+131 pp.

\bibitem[PRS22]{PRS}
    J. Parcet, \'E. Ricard,  M. de la Salle,
    \emph{Fourier multipliers in $\SL_n(\mathbb{R})$},
    Duke Math. J. {\bf 171} (2022), no. 6, 1235--1297.

\bibitem[PSS13]{PSS13}
   D. Potapov, A. Skripka, F. Sukochev,
   \emph{Spectral shift function of higher order},
   Invent. Math. {\bf 193} (2013), no. 3, 501--538.

\bibitem[Ric15]{RicardMazur}
    \'E. Ricard,
    {\em H\"older estimates for the noncommutative Mazur maps},
    Arch. Math. (Basel) {\bf 104} (2015), no. 1, 37--45.

\bibitem[Tak03a]{Takesaki2}
    M.~Takesaki,
    {\em Theory of operator algebras. II},
    Springer-Verlag (2003).

\bibitem[Ter81]{TerpLN}
  M. Terp,
  \emph{$L_p$ spaces associated with von Neumann algebras. Notes},
  Report No. 3a + 3b, K\o benhavns Universitets Matematiske Institut, Juni 1981.

\bibitem[ToTu10]{TodorovTurowska}
    I.G. Todorov, L. Turowska
    \emph{Multipliers of multidimensional Fourier algebras},
     Operators and Matrices. 2010; 4(4):459-84.

 \bibitem[Vos]{VosThesis}
     G. Vos, 
     \emph{Multipliers and transference on noncommutative $L_p$-spaces},
     PhD Thesis. To appear on the TU Delft repository.

    

\end{thebibliography}
\end{document}